\providecommand{\U}[1]{\protect\rule{.1in}{.1in}}
\newtheorem{remark}{Remark}
\newtheorem{thm}{Theorem}
\newtheorem{prop}{Proposition}
\newtheorem{lem}{Lemma}
\newtheorem{defn}{Definition}
\begin{document}

\title{Analysis of obstacles immersed in viscous fluids using Brinkman's law for steady Stokes and Navier-Stokes equations}
\author[a,b]{Jorge Aguayo}
\ead{jaguayo@dim.uchile.cl}
\author[c,d]{Hugo Carrillo-Lincopi}
\ead{hugo.carrillo@inria.cl}
\address[a]{Mathematical Engineering Department, Faculty of Physical
and Mathematical Sciences, Universidad de Chile, Santiago, Chile}
\address[b]{Bernoulli Institute, University of Groningen, Groningen, The Netherlands}
\address[c]{Center for Mathematical Modelling, Universidad de Chile, Santiago, Chile}
\address[d]{Inria Chile Research Center, Av. Apoquindo 2827, Las Condes, Chile}
\date{\today}

\begin{abstract}
From the steady Stokes and Navier-Stokes models, a penalization method has been considered by several authors for approximating those fluid equations around obstacles. In this work, we present a justification for using fictitious domains to study obstacles immersed in incompressible viscous fluids through a simplified version of \textcolor{black}{Brinkman's} law for porous media. If the scalar function $\psi$ is considered as the inverse of permeability, it is possible to study the singularities of $\psi$ as approximations of obstacles (when $\psi$ tends to $\infty$) or of the domain corresponding to the fluid (when $\psi = 0$ or is very close to $0$). The strong convergence of the solution of the perturbed problem to the solution of the strong problem is studied, also considering error estimates that depend on the penalty parameter, both for fluids modeled with the Stokes and Navier-Stokes equations with \textcolor{black}{inhomogeneous} boundary conditions. A numerical experiment is presented that validates this result and allows to study the application of this perturbed problem simulation of flows and the identification of obstacles.
\end{abstract}

\maketitle
\pagestyle{myheadings}
\thispagestyle{plain}

\section{Introduction}
When modeling flows containing obstacles or enclosed by solid walls with a complex geometry, there are at least two main approaches: using body-fitted unstructured meshes to simulate the geometries or using a simplified mesh adding a penalization term in the differential equations. 

In numerical methods relying on the discretization with body-fitted geometry, solid walls are treated by Dirichlet boundary conditions on a mesh refined in the neighborhood of the wall. However, in this methods it is necessary to \textcolor{black}{rebuild} the meshes whenever the geometry changes, which could be a disadvantage for the \textcolor{black}{computing} performance. %of the method itself. 

%The Brinkmann equation \cite{brinkman1949permeability, B49}, has been used to model flow inside a large porosity medium and furthermore to consider the boundary layer close to a solid wall (ADD REFS). Moreover in the work of Angot \cite{A99, ABF99},

\textcolor{black}{The approach given by the addition of penalization terms has been reported in the pioneering work of Angot \cite{angot1990new} and \cite{khadra2000fictitious}, where the authors in addition show a numerical validation of the model}. Instead of considering Dirichlet boundary conditions on solid walls, in these methods the addition of a penalization or forcing term is considered in order to make the flow immovable inside the obstacles. The additional term can be seen as porosity Brinkman's law for imposing porous wall conditions \cite{B49} and it corresponds to the limit to null porosity. This method is versatile in terms of geometry: the mesh does not need to depend on the shape of the solid body, so that several geometries can be simulated in a simpler way. 

\textcolor{black}{Several extensions for Brinkman's penalty method have been studied, for example, the penalization was used to model the interface of multiphase flows \cite{BB15, bhalla2020simulating}, to study gas-particle flows coupling weakly compressible formulation of the Navier-Stokes equations with mass and heat transfer \cite{HDW19}, moving obstacles \cite{RAB07}, and penalizing Dirichlet or Neumann conditions applied on obstacle boundaries \cite{SYOS19,TNB21}. In  \cite{brown2014characteristic} the authors propose an extension to Brinkman penalization for generalized Neumann and Robin boundary conditions by introducing hyperbolic penalization terms with characteristics pointing inward on solid obstacles. In \cite{thirumalaisamy2021handling} the authors also study the Brinkman penalization method for Neumann and Robin boundary conditions. This method has been extended even for other equations, see for example \cite{kadoch2012volume} and \cite{ramiere2007fictitious}.
}

%\textcolor{black}{In \cite{khadra2000fictitious}, the authors provide numerical experiments to validate the model.}

In addition, beyond the simulation of flows, the inverse problem of the obstacles or wall shapes estimation  \textcolor{black}{also can be studied considering the approaches mentioned above, that is, body-fitted unstructured meshes or the addition of a penalization term.} For the first approach, we can find, for example, works of \cite{B10} and \cite{conca2010detection}, where the authors provide identifiability and stability results, and \cite{alvarez2008identification} where the authors use shape derivatives arguments for the reconstruction. However, those methods need the geometry of the obstacle is not too complex, usually assuming a circular nature. For the second approach we can find works of \cite{aguayo2020} and \cite{fernandez2018noniterative} not depending on the geometry. However, either we study the direct or inverse problem, the penalized problem is seen as an approximation, so it is necessary to establish how accurate it is.

%In the literature, we can find 
\textcolor{black}{In the literature, there are several works showing numerical validations} of the approximation between the penalized problem and the problem with the simulated geometry, for example \cite{jause2012numerical} and references above. \textcolor{black}{However, there are not much works showing in a theoretical way the effectiveness of the method as an approximation of obstacles. We mention} previous works in %the theoretical study of the approximation are 
\cite{A99, ABF99}, where the authors \textcolor{black}{formally} established $\boldsymbol{H}^1$ error bounds for the approximate and exact problem in the steady Stokes system and unsteady Stokes, respectively. In both works, only Dirichlet conditions are considered for the entire domain boundary.

\textcolor{black}{In this article, we study the modeling of obstacles immersed in viscous fluids that satisfy the Stokes and Navier-Stokes equations for inhomogeneous boundary conditions, by the approximation of the fluid equations with the addition of a penalization term.  For the stationary Stokes problem the boundary conditions consist on: a known velocity entry, Dirichlet boundary conditions in the walls and a Neuman boundary condition in the outlet, while for the stationary Navier-Stokes we consider known velocities of entry and outlet, and Dirichlet boundary conditions for the walls. We
}
%The objective of this article is to
establish new convergence results, consisting in the steady Stokes and Navier-Stokes equations in the fluid domain, approximated by the respective penalized equations in a bigger domain containing both the fluid and solid part. \textcolor{black}{We follow techniques of \cite{A99, ABF99}, that is, we make problems to have homogeneous boundary conditions and we study the weak convergence of a sequence of functions depending on the penalization term in order to establish the strong convergence with rates depending on the penalization term going to infinite.}

\textcolor{black}{The results we present in this work have not been reported before in these particular settings, which have been chosen motivated by the applied problem of modeling blood flow in the presence of heart valves. In particular, this work justifies the use of a penalizing term in \cite{aguayo2020}, where the authors study the inverse problem of determining the geometry of heart valves given velocity measurements in the whole virtual domain at a given time, using as model the equations we present in this work. 
An important aspect to mention is that in such work the authors assume the velocity measurements consist of one snapshot obtained from magnetic resonance imaging (MRI) measurements using a technique known as phase-contrast MRI \cite{brown2014magnetic, kwong2008cardiovascular}, in which the time derivative of the velocity is assumed to be negligible due to the very short timescale of the data acquisition.} 

\textcolor{black}{The remaining of this article is organized as follows. In Section \ref{notations}, we provide the reader the basic notations of the fluid and solid domains, and the functional spaces involved in the main theorems. In Sections \ref{stokes} and \ref{ns},} we show estimates of the error in norm $\boldsymbol{H}^1$ induced by the penalization. %, in Sections \ref{stokes} and \ref{ns}. 
In section \ref{stokes}, we show the analysis for the Stokes equations with mixed boundary conditions, which are the conditions usually considered in problems such that parts of the boundary are not walls. In Section \ref{ns}, we show the analysis for the Navier-Stokes case with \textcolor{black}{inhomogeneous} Dirichlet boundary conditions, which is also an improvement to \cite{A99}, to the nonlinear case. We closely follow ideas of \cite{A99, ABF99}. Finally, in Section \ref{num}, we show numerical tests to validate the theory in previous sections.

\section{Preliminaries and notations}\label{notations}

Consider a non-empty bounded domain $\Omega\subseteq\mathbb{R}^{d}$,
$d\in\left\{  2,3\right\}  $. The Lebesgue measure of $\Omega$ is denoted by
$\left\vert \Omega\right\vert $, which extends to lesser dimension spaces. The
norm and seminorms for Sobolev spaces $W^{m,p}\left(  \Omega\right)  $ is
denoted by $\left\Vert \cdot\right\Vert _{m,p,\Omega}$ and $\left\vert
\cdot\right\vert _{m,p,\Omega}$, respectively. For $p=2$, the norm, seminorms
and inner product of the space $W^{m,2}\left(  \Omega\right)  =H^{m}\left(
\Omega\right)  $ are denoted by $\left\Vert \cdot\right\Vert _{m,\Omega}$,
$\left\vert \cdot\right\vert _{m,\Omega}$ and $\left(  \cdot,\cdot\right)
_{m,\Omega}$, respectively. Also, $\left\Vert \cdot\right\Vert _{\infty
,\Omega}$ denotes the norm of $L^{\infty}\left(  \Omega\right)  $. The spaces
$\boldsymbol{H}^{m}\left(  \Omega\right)  $ and $\boldsymbol{W}^{m,p}\left(
\Omega\right)  $ are defined by $\boldsymbol{H}^{m}\left(  \Omega\right)
=\left[  H^{m}\left(  \Omega\right)  \right]  ^{d}$ and $\boldsymbol{W}%
^{m,p}\left(  \Omega\right)  =\left[  W^{m,p}\left(  \Omega\right)  \right]
^{d}$. The notation for norms, seminorms and inner products will be extended
from $W^{m,p}\left(  \Omega\right)  $ or $H^{m}\left(  \Omega\right)  $.

We assume that $\Gamma=\partial\Omega$ is piecewise $\mathcal{C}^{1}$ and $\Omega$ contains $N$ regular obstacles given by nonempty \textcolor{black}{open} sets $\Omega_S^{j}\subseteq\Omega$ for $j\in\left\{  1,\ldots,N\right\}  $.

\begin{defn}
The sets $\Omega_S$, $\Omega_F$, $\Sigma_{S}^{j}$ (for $j\in\left\{
1,\ldots,N\right\}  $) are defined by
\[
\Omega_S=\bigcup_{j=1}^{N}\Omega_S^{j},\quad\Omega_F=\Omega
\setminus\overline{\Omega}_{s}%
\]%
\[
\Sigma_{s}^{i}=\partial\Omega_S^{j},\quad\Gamma=\partial\Omega
\]
We also define $\Gamma_{I},\Gamma_{W},\Gamma_{O}\subset\Gamma$, disjoint
subsets of $\Gamma$, such that
\[
\overline{\Gamma}_{I}\cup\overline{\Gamma}_{W}\cup\overline{\Gamma}_{O}%
=\Gamma,
\]%
\[
\Gamma_{I}\subset\partial\Omega_F\cap\partial\Omega,\quad\Gamma_{O}%
\subset\partial\Omega_F\cap\partial\Omega
\]
Finally, we define $\Gamma_{F,W}=\partial\Omega_F\setminus(\Gamma_{I}%
\cup\Gamma_{O})$
\end{defn}

The set $\Omega_F$ models a fluid domain where the Stokes or Navier-Stokes
equations is fulfilled.

\begin{figure}[H]
\centering\includegraphics[height=4.5cm,keepaspectratio]{./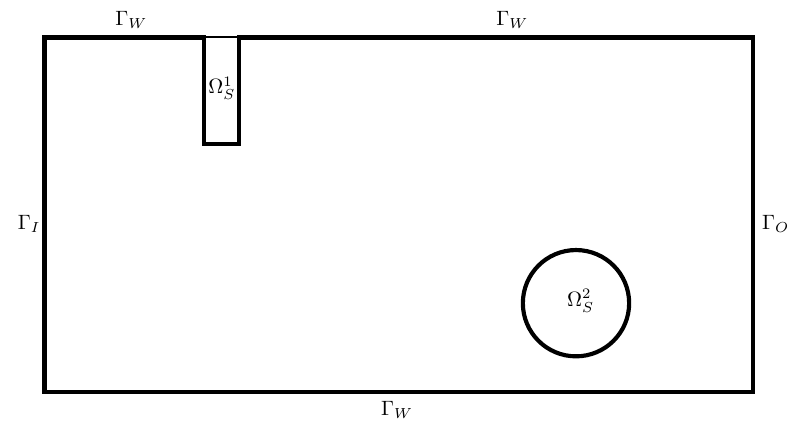}\caption{Example
of $\Omega$ with two obstacles $\Omega_S^{1}$ and $\Omega_S^{2}$.}%
\label{fig1}%
\end{figure}

\begin{defn}
Let $\gamma\subseteq\partial\Omega$. We define the following spaces%
\begin{align*}
\boldsymbol{H}_{0}^{1}(\Omega)  &  =\left\{  \boldsymbol{v}%
\in\boldsymbol{H}^{1}\left(  \Omega\right)  \mid\boldsymbol{v}=\boldsymbol{0}%
\text{ on }\partial\Omega\right\}\\
\boldsymbol{H}_{\gamma}^{1}(\Omega)  &  =\left\{  \boldsymbol{v}%
\in\boldsymbol{H}^{1}\left(  \Omega\right)  \mid\boldsymbol{v}=\boldsymbol{0}%
\text{ on }\partial\Omega\setminus\gamma\right\} \\
\boldsymbol{H}_{\operatorname{div}}(\Omega)  &  =\left\{  \boldsymbol{v}%
\in\boldsymbol{H}^{1}\left(  \Omega\right)  \mid\operatorname{div}%
\boldsymbol{v}=\boldsymbol{0}\text{ on }\Omega\right\} \\
\boldsymbol{V}_{\gamma}(\Omega)  &  =\boldsymbol{H}_{\gamma}^{1}(\Omega
)\cap\boldsymbol{H}_{\operatorname{div}}(\Omega)\\
\boldsymbol{V} (\Omega)  &  =\boldsymbol{H}_{0}^{1}(\Omega
)\cap\boldsymbol{H}_{\operatorname{div}}(\Omega)\\
L_{0}^{2}\left(  \Omega\right)   &  =\left\{  p\in L^{2}\left(  \Omega\right)
\mid\left(  p,1\right)  _{0,\Omega}=0\right\} \\
\mathcal{H}_{\gamma}\left(  \Omega\right)   &  =\boldsymbol{V}_{\gamma}%
(\Omega)\times L_{0}^{2}\left(  \Omega\right) \\
\mathcal{H}\left(  \Omega\right)   &  =\boldsymbol{V} (\Omega)\times
L_{0}^{2}\left(  \Omega\right)
\end{align*}

We extend these definitions to $\Omega_F$ and $\Omega_S$.
\end{defn}

\section{Stokes system with mixed boundary conditions}\label{stokes}

Let $\nu>0$, $\boldsymbol{u}_{D}\in\boldsymbol{H}^{1/2}(\Gamma_{I})$ such that
$\boldsymbol{u}_{D}=\boldsymbol{0}$ on $\overline{\Gamma}_{I} \cap
\overline{\Gamma}_{W}$ and $(\boldsymbol{u}, p) \in\mathcal{H}\left(
\Omega\right)  $ the unique solution of the Stokes system with mixed
boundary conditions over $\Omega_F$ given by

\begin{alignat}{2}
                           -\nu\triangle\boldsymbol{u} + \nabla p & = \boldsymbol{0}     & \quad &\text{in }\Omega_F \label{StokesSystem}\\
                                \operatorname{div} \boldsymbol{u} & =  0             		 & 			 &\text{in } \Omega_F\nonumber\\
                                                   \boldsymbol{u} & = \boldsymbol{u}_{D} &       &\text{on } \Gamma_{I}\nonumber\\
                                                   \boldsymbol{u} & = \boldsymbol{0}     &       &\text{on } \Gamma_{F,W}\nonumber\\
-\nu\dfrac{\partial\boldsymbol{u}}{\partial n} + p \boldsymbol{n} & = \boldsymbol{0}     &       &\text{on } \Gamma_{O}\nonumber
\end{alignat}
extended by $(\boldsymbol{0}, 0)$ in $\Omega_S$.

On the other hand, for each $R>0$, let $(\boldsymbol{u}_{R}, p_{R})
\in\mathcal{H}\left(  \Omega\right)  $ the unique solution of the modified Stokes system with a $L^{2}$ penalization term over $\Omega$ given by

\begin{alignat}{2} 
-\nu\triangle\boldsymbol{u}_{R} + \nabla p_{R} + \psi_{R} \boldsymbol{u}_{R} & = \boldsymbol{0}     & \quad &\text{in }\Omega \label{Penalized_System}\\
                                       \operatorname{div} \boldsymbol{u}_{R} & =  0                 &       &\text{in } \Omega\nonumber\\
                                                            \boldsymbol{u}_R & = \boldsymbol{u}_{D} &       &\text{on } \Gamma_{I}\nonumber\\
                                                            \boldsymbol{u}_R & = \boldsymbol{0}     &       &\text{on } \Gamma_{W}\nonumber\\
       -\nu\dfrac{\partial\boldsymbol{u}_R}{\partial n} + p_R \boldsymbol{n} & = \boldsymbol{0}     &       &\text{on } \Gamma_{O}\nonumber
\end{alignat}

where $\psi_{R} = R \chi_{\Omega_S}$, and
\[
\chi_{\Omega_S} (x) =
\begin{cases}
1 & \text{if} \ x \in\Omega_S ,\\
0 & \text{otherwise.}%
\end{cases}
\]

%This work is motivated for numerical implementations performed in \cite{aguayo2020}, where the approach consists in reconstructing a potential via the minimization of a discrepancy/regularization least-squares functional. Such a potential appears as an additional term in the Stokes equation and it is understood as an approximation of the characteristic function of the valves. The aim is to establish formally that the problem studied in \cite{aguayo2020} is effectively an approximation of the Stokes system, in a sense that we define.

\subsection{Previous results}

We start giving some previous results in order to use them in the proof of Theorems \ref{stokes1} and \ref{stokes2}.

\begin{lem}
\label{relevo1} Let $\eta>0$ and $M>0$ such that $\left\Vert \boldsymbol{u}%
_{D}\right\Vert _{1/2,\Gamma_{I}}\leq M$. There exists $\boldsymbol{g}%
\in\boldsymbol{H}^{1}\left(  \Omega_F\right)  $ such that
$\operatorname{div}\boldsymbol{g}=0$, $\boldsymbol{g}=\boldsymbol{u}_{D}$ on
$\Gamma_{I}$, $\boldsymbol{g}=\boldsymbol{0}$ on $\Gamma_{F,W}$ and
\[
\left(  \forall\boldsymbol{v}\in\boldsymbol{H}^{1}\left(  \Omega_F\right)
\right)  \text{\quad}\left\vert \left(  \left(  \nabla\boldsymbol{g}\right)
\boldsymbol{v},\boldsymbol{v}\right)_{0,\Omega_F}  \right\vert \leq\eta\left\vert
\boldsymbol{v}\right\vert _{1,\Omega_F}^{2}%
\]
and a constant $c>0$, that only depends of $\Omega$, $\Gamma_{I}$, $\Gamma
_{W}$ and $M$ such that
\[
\left\Vert \boldsymbol{g}\right\Vert _{1,\Omega}\leq c\left\Vert
\boldsymbol{u}_{D}\right\Vert _{1/2,\Gamma_{I}}%
\]

\end{lem}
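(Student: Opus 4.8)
The plan is to read this as the classical Leray--Hopf lifting lemma transplanted to the present mixed boundary setting, and to build $\boldsymbol{g}$ in two stages: first an arbitrary solenoidal extension carrying the correct trace, and then a localized modification that forces the trilinear term to be as small as $\eta$ demands.

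First I would produce a fixed field $\boldsymbol{w}\in\boldsymbol{H}^1(\Omega_F)$ with $\operatorname{div}\boldsymbol{w}=0$, $\boldsymbol{w}=\boldsymbol{u}_D$ on $\Gamma_I$, $\boldsymbol{w}=\boldsymbol{0}$ on $\Gamma_{F,W}$, and $\left\Vert\boldsymbol{w}\right\Vert_{1,\Omega_F}\le c\left\Vert\boldsymbol{u}_D\right\Vert_{1/2,\Gamma_I}$. This is routine trace theory: since $\boldsymbol{u}_D$ vanishes on $\overline{\Gamma}_I\cap\overline{\Gamma}_W$, its extension by zero to $\Gamma_I\cup\Gamma_{F,W}$ stays in $\boldsymbol{H}^{1/2}$; a continuous right inverse of the trace yields an $\boldsymbol{H}^1$ lifting, and the divergence is then removed by a Bogovskii-type corrector supported away from $\Gamma_I\cup\Gamma_{F,W}$, each step bounded by $\left\Vert\boldsymbol{u}_D\right\Vert_{1/2,\Gamma_I}$. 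No flux compatibility is needed because the outflow part $\Gamma_O$ is left free, so net flux entering through $\Gamma_I$ may leave through $\Gamma_O$. Extending $\boldsymbol{w}$ by $\boldsymbol{0}$ into $\Omega_S$ (legitimate since $\boldsymbol{w}=\boldsymbol{0}$ on $\Sigma_S^j\subset\Gamma_{F,W}$) gives the bound in the $\Omega$-norm of item (5). This settles properties (1)--(3) and (5); only the smallness (4) is delicate.

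For (4) the point is to make $b(\boldsymbol{v},\boldsymbol{g},\boldsymbol{v}):=\left((\nabla\boldsymbol{g})\boldsymbol{v},\boldsymbol{v}\right)_{0,\Omega_F}$ uniformly small. To keep $\operatorname{div}\boldsymbol{g}=0$ while shrinking the support, I would represent $\boldsymbol{w}$ as a curl, $\boldsymbol{w}=\operatorname{curl}\boldsymbol{\Phi}$ (a scalar stream function for $d=2$, a vector potential for $d=3$, with $\boldsymbol{\Phi}\in\boldsymbol{H}^2\hookrightarrow L^\infty$), and set $\boldsymbol{g}=\operatorname{curl}(\theta_\delta\boldsymbol{\Phi})$, where $\theta_\delta$ is the Hopf logarithmic cutoff: $\theta_\delta\equiv1$ in a thin layer about $\partial\Omega_F$, $\theta_\delta\equiv0$ outside $\{\,d(\boldsymbol{x})\le\delta\,\}$ with $d(\boldsymbol{x})=\operatorname{dist}(\boldsymbol{x},\partial\Omega_F)$, and crucially $d(\boldsymbol{x})\,|\nabla\theta_\delta(\boldsymbol{x})|\le\varepsilon(\delta)$ with $\varepsilon(\delta)\to0$ as $\delta\to0$. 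Because $\theta_\delta=1$ near $\partial\Omega_F$, the field $\boldsymbol{g}$ still has the prescribed trace and is solenoidal, while its gradient concentrates in the shrinking layer.

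Finally I would estimate $b(\boldsymbol{v},\boldsymbol{g},\boldsymbol{v})$ for $\boldsymbol{v}$ vanishing on the Dirichlet part $\Gamma_I\cup\Gamma_{F,W}$, the regime in which the lemma is used: after isolating the contribution carrying $\nabla\theta_\delta$, the singular factor is controlled by $d(\boldsymbol{x})\,|\nabla\theta_\delta|\le\varepsilon(\delta)$, the factor $|\boldsymbol{v}|/d(\boldsymbol{x})$ is absorbed by Hardy's inequality $\int_{\Omega_F}|\boldsymbol{v}|^2/d(\boldsymbol{x})^2\le C\,|\boldsymbol{v}|_{1,\Omega_F}^2$, and the lower-order term supported in $\{\,d\le\delta\,\}$ is made small by dominated convergence as $\delta\to0$; together these give $|b(\boldsymbol{v},\boldsymbol{g},\boldsymbol{v})|\le C\,\varepsilon(\delta)\,|\boldsymbol{v}|_{1,\Omega_F}^2$, so fixing $\delta$ with $C\varepsilon(\delta)\le\eta$ yields (4). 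I expect the genuine obstacle to be twofold: first, the sharp logarithmic cutoff combined with Hardy's inequality on a merely piecewise-$\mathcal{C}^1$ domain, so that the integration-by-parts boundary terms vanish on $\Gamma_{F,W}$ and the bound is uniform in $\boldsymbol{u}_D$; and second, the mixed boundary conditions, since the Hopf localization must sit near $\Gamma_I\cup\Gamma_{F,W}$ — where both the data and the vanishing of the test functions live — whereas any net flux of $\boldsymbol{u}_D$ through $\Gamma_I$ forces a compensating flux through the free boundary $\Gamma_O$ that must be carried without spoiling the smallness. The cost of shrinking $\delta$ is borne by $\left\Vert\boldsymbol{g}\right\Vert_{1,\Omega}$, which is why the constant $c$ is allowed to depend on $M$ (equivalently on the chosen $\eta$); the earlier lifting steps are routine by comparison.
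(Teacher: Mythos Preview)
Your proposal is correct and is, in substance, the same construction the paper relies on: the paper's proof simply extends $\boldsymbol{u}_D$ to a zero-total-flux datum $\boldsymbol{u}_D^*\in\boldsymbol{H}^{1/2}(\partial\Omega_F)$ and then invokes Lemma~IV.2.3 of Girault--Raviart (the Hopf lifting) for the solenoidal extension with the small trilinear form, together with Lemma~IX.4.2 of Galdi for the norm bound. What you have spelled out --- stream function/vector potential, logarithmic cutoff $\theta_\delta$, Hardy's inequality --- is precisely the content of the cited Girault--Raviart lemma, so the two proofs differ only in that the paper outsources the hard step while you reconstruct it. One minor bookkeeping difference: the paper absorbs the net inflow flux by prescribing $\boldsymbol{u}_D^*$ on $\Gamma_O$ before lifting, whereas you lift first and correct divergence via Bogovskii near $\Gamma_O$; and your remark that Hardy really only yields the smallness for $\boldsymbol{v}$ vanishing on $\Gamma_I\cup\Gamma_{F,W}$ is correct and is in fact all that the cited lemma provides and all that is used later.
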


\begin{proof}
Let $\boldsymbol{u}_{D}^{\ast}\in\boldsymbol{H}^{1/2}\left(  \partial
\Omega_F\right)  $, with $\boldsymbol{u}_{D}^{\ast}$ an extension of
$\boldsymbol{u}_{D}$ such that $\left\Vert \boldsymbol{u}_{D}^{\ast
}\right\Vert _{1/2,\partial\Omega_F}\leq2\left\Vert \boldsymbol{u}%
_{D}\right\Vert _{1/2,\Gamma_{I}}$ and
\[
\int_{\partial\Omega_F}\boldsymbol{u}_{D}^{\ast}\cdot\boldsymbol{n}\text{
}dS=0
\]
Applying Lemma IV.2.3 in \cite{GR86}, there exists $\boldsymbol{g}%
\in\boldsymbol{H}^{1}\left(  \Omega_F\right)  $ such that
$\operatorname{div}\boldsymbol{g}=0$ in $\Omega_F$, $\boldsymbol{g}%
=\boldsymbol{u}_{D}^{\ast}$ on $\partial\Omega_F$ and%
\[
\left(  \forall\boldsymbol{v}\in\boldsymbol{H}^{1}\left(  \Omega_F\right)
\right)  \text{\quad}\left\vert \left(  \left(  \nabla\boldsymbol{g}\right)
\boldsymbol{v},\boldsymbol{v}\right)  _{0,\Omega_F}\right\vert \leq
\eta\left\vert \boldsymbol{v}\right\vert _{1,\Omega_F}^{2}%
\]
In particular, $\boldsymbol{g}=\boldsymbol{u}_{D}$ on $\Gamma_{I}$ and
$\boldsymbol{g}=\boldsymbol{0}$ on $\Gamma_{W}$, proving the first part of the
lemma. Using Lemma IX.4.2 in \cite{G11}, we can deduce the existence of $c$.
\end{proof}

%\subsection*{Step 1: A first estimate}

\begin{remark}
Since $\boldsymbol{g}=\boldsymbol{0}$ on $\Gamma_{F,W}$, it is possible to
extend $\boldsymbol{g}\in\boldsymbol{H}^{1}\left(  \Omega_F\right)  $ to
$\boldsymbol{g}\in\boldsymbol{H}^{1}\left(  \Omega\right)  $ such that
$\boldsymbol{g}=\boldsymbol{u}_{D}$ on $\Gamma_{I}$ and $\boldsymbol{g}%
=\boldsymbol{0}$ on $\Gamma_{W}$.
\end{remark}

\begin{prop}
\label{prop1} Let $\boldsymbol{v}_{R} = \boldsymbol{u}_{R}-\boldsymbol{g}$,
where $\boldsymbol{g}$ is given by Lemma \ref{relevo1}. Then
%$$
%\nu \vert\boldsymbol{v}_R\vert_{1, \Omega}^2 + R \Vert \boldsymbol{v}_{R} \Vert_{0, \Omega_S}  \leq \nu \vert\boldsymbol{g}_{1,\Omega} \vert\boldsymbol{v}_R\vert_{1,\Omega}
%$$%
\[
\vert\boldsymbol{v}_{R}\vert_{1, \Omega} \leq\vert\boldsymbol{g}\vert_{1, \Omega}%
\quad\text{and}\quad\Vert\boldsymbol{v}_{R} \Vert_{0, \Omega_S} \leq\dfrac{\nu}{R}
\vert\boldsymbol{g}\vert_{1,\Omega}
\]

\end{prop}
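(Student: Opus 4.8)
The plan is to use the divergence-free weak formulation of \eqref{Penalized_System} and to test it against $\boldsymbol{v}_R$ itself. First I would verify that $\boldsymbol{v}_R=\boldsymbol{u}_R-\boldsymbol{g}$ is an admissible test function. Since $\boldsymbol{u}_R$ and $\boldsymbol{g}$ are both divergence-free and both coincide with $\boldsymbol{u}_D$ on $\Gamma_I$ and with $\boldsymbol{0}$ on $\Gamma_W$, their difference $\boldsymbol{v}_R$ lies in $\boldsymbol{V}_{\Gamma_O}(\Omega)$. The decisive structural fact, coming from Lemma~\ref{relevo1} together with the Remark, is that $\boldsymbol{g}$ vanishes on $\Gamma_{F,W}$ (which contains the obstacle boundary $\partial\Omega_S$) and is extended by $\boldsymbol{0}$ into $\Omega_S$; hence $\boldsymbol{g}\equiv\boldsymbol{0}$ on $\Omega_S$, a property I will use to cancel a cross term.

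Testing the momentum equation with $\boldsymbol{v}_R\in\boldsymbol{V}_{\Gamma_O}(\Omega)$ eliminates the pressure (because $\operatorname{div}\boldsymbol{v}_R=0$) and the $\Gamma_O$ boundary term (by the natural condition), leaving
\[
\nu(\nabla\boldsymbol{u}_R,\nabla\boldsymbol{v}_R)_{0,\Omega}+R(\boldsymbol{u}_R,\boldsymbol{v}_R)_{0,\Omega_S}=0 .
\]
Substituting $\boldsymbol{u}_R=\boldsymbol{v}_R+\boldsymbol{g}$ and using $\boldsymbol{g}\equiv\boldsymbol{0}$ on $\Omega_S$ to kill the term $R(\boldsymbol{g},\boldsymbol{v}_R)_{0,\Omega_S}$, I obtain the energy identity
\[
\nu|\boldsymbol{v}_R|_{1,\Omega}^2+R\|\boldsymbol{v}_R\|_{0,\Omega_S}^2=-\nu(\nabla\boldsymbol{g},\nabla\boldsymbol{v}_R)_{0,\Omega}.
\]
Both terms on the left are nonnegative and the right-hand side is bounded by $\nu|\boldsymbol{g}|_{1,\Omega}|\boldsymbol{v}_R|_{1,\Omega}$ via Cauchy--Schwarz. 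Dropping the (nonnegative) penalty term and cancelling one factor of $|\boldsymbol{v}_R|_{1,\Omega}$ yields the first estimate $|\boldsymbol{v}_R|_{1,\Omega}\le|\boldsymbol{g}|_{1,\Omega}$ immediately.

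For the second estimate I would feed the first one back into the same identity: retaining the penalty term and discarding the (nonnegative) gradient term gives $R\|\boldsymbol{v}_R\|_{0,\Omega_S}^2\le\nu|\boldsymbol{g}|_{1,\Omega}|\boldsymbol{v}_R|_{1,\Omega}\le\nu|\boldsymbol{g}|_{1,\Omega}^2$, so the penalty energy is controlled and $\|\boldsymbol{v}_R\|_{0,\Omega_S}\le(\nu/R)^{1/2}|\boldsymbol{g}|_{1,\Omega}$.

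The main obstacle is precisely the exponent in this second estimate. The energy argument above only delivers the square-root power $(\nu/R)^{1/2}$, and the full power stated cannot come from the energy identity alone: the pair $(a,b)=(|\boldsymbol{v}_R|_{1,\Omega},\|\boldsymbol{v}_R\|_{0,\Omega_S})$ is constrained only by $\nu a^2+Rb^2\le\nu|\boldsymbol{g}|_{1,\Omega}\,a$, whose sharpest consequence for $b$ is $b\le\tfrac12(\nu/R)^{1/2}|\boldsymbol{g}|_{1,\Omega}$. A stronger bound would have to exploit the equation $R\boldsymbol{v}_R=\nu\triangle\boldsymbol{v}_R-\nabla p_R$ holding inside $\Omega_S$ together with the smallness of the interface trace of $\boldsymbol{v}_R$ on $\partial\Omega_S$ (a boundary-layer effect of width $\sim(\nu/R)^{1/2}$). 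I would therefore either confirm that the intended exponent is the square-root power or supplement the energy estimate with a duality/boundary-layer argument localized in $\Omega_S$; this is the step I expect to require the most care.
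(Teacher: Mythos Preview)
Your approach is exactly the paper's: derive the variational identity
\[
\nu(\nabla\boldsymbol{v}_R,\nabla\boldsymbol{w})_{0,\Omega}+R(\boldsymbol{v}_R,\boldsymbol{w})_{0,\Omega_S}=-\nu(\nabla\boldsymbol{g},\nabla\boldsymbol{w})_{0,\Omega},
\]
set $\boldsymbol{w}=\boldsymbol{v}_R$, apply Cauchy--Schwarz, and read off both bounds. The paper's proof literally ends with ``and then we conclude'' right after this energy inequality, so nothing more is invoked than what you wrote.

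Your suspicion about the second exponent is entirely justified: the energy identity yields only
\[
\|\boldsymbol{v}_R\|_{0,\Omega_S}\le\Bigl(\tfrac{\nu}{R}\Bigr)^{1/2}|\boldsymbol{g}|_{1,\Omega},
\]
and the paper's own argument gives nothing better. The stated bound $\tfrac{\nu}{R}|\boldsymbol{g}|_{1,\Omega}$ is a slip in the statement. This is corroborated by the paper itself: Theorem~\ref{stokes1}, which is proved from this proposition, claims only $\|\boldsymbol{u}-\boldsymbol{u}_R\|_{0,\Omega_S}\le C R^{-1/2}$ (and since $\boldsymbol{u}=\boldsymbol{g}=\boldsymbol{0}$ on $\Omega_S$, this is exactly $\|\boldsymbol{v}_R\|_{0,\Omega_S}$). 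So there is no hidden boundary-layer or duality argument to chase; the square-root power is the intended result and your proof is complete as written.
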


\begin{proof}
Consider the penalized equation \eqref{Penalized_System} after introducing $\boldsymbol{v}_R$ given by
\begin{alignat*}{2} 
- \nu \triangle \boldsymbol{v}_R + \nabla p_R + \psi_R \boldsymbol{v}_R  & = \nu \triangle \boldsymbol{g}&\quad & \text{in } \Omega \\
                                          \text{div} (\boldsymbol{v}_R)  & = 0                           & & \text{in } \Omega \\
                                                       \boldsymbol{v}_R  & =  0                          & & \text{on } \Gamma_{I} \cup \Gamma_{W} \\
-\nu \dfrac{\partial \boldsymbol{v}_R}{\partial n} + p_R \boldsymbol{n}  & =  \nu \dfrac{\partial \boldsymbol{g}}{\partial \boldsymbol{n}} & & \text{on } \Gamma_{O} \\
\end{alignat*}
Testing this equations by $\boldsymbol{w} \in \boldsymbol{V}_{\Gamma_O} (\Omega)$ and $q \in L_0^2 (\Omega)$, we obtain
\begin{equation} \label{VF1}
\nu (\nabla \boldsymbol{v}_R, \nabla \boldsymbol{w})_{0, \Omega} + R (\boldsymbol{v}_R , \boldsymbol{w})_{0, \Omega_S} %- (p_R, \text{div } \boldsymbol{w} )_{0,\Omega} %+ (\text{div } \boldsymbol{v_R} , q)_{0, \Omega}
= - \nu (\nabla \boldsymbol{g}, \nabla \boldsymbol{w})_{0, \Omega}
\end{equation}
Taking  $\boldsymbol{w} = \boldsymbol{v}_R $, we deduce
$$
\nu \vert\boldsymbol{v}_R\vert_{1, \Omega}^2 + R \Vert\boldsymbol{v}_R\Vert_{0, \Omega_S}^2  = - \nu (\nabla \boldsymbol{g}, \nabla \boldsymbol{v}_R)_{0, \Omega} \leq \nu \vert\boldsymbol{g}\vert_{1, \Omega} \vert\boldsymbol{v}_R\vert_{1, \Omega}
$$
and then we conclude.
\end{proof}

\begin{prop}
$\boldsymbol{v}_{R} $ converges weakly to $\boldsymbol{v}$ in $\boldsymbol{V}%
_{\Gamma_{O}} (\Omega)$.
\end{prop}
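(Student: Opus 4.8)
The plan is to combine the uniform bounds from Proposition~\ref{prop1} with a compactness argument and then identify the weak limit by passing to the limit in the variational formulation \eqref{VF1}. Here $\boldsymbol{v}=\boldsymbol{u}-\boldsymbol{g}$ denotes the shifted exact solution, extended by zero into $\Omega_S$.

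First I would exploit the estimate $|\boldsymbol{v}_{R}|_{1,\Omega}\le|\boldsymbol{g}|_{1,\Omega}$, which shows that the family $\{\boldsymbol{v}_{R}\}_{R>0}$ is bounded in $\boldsymbol{V}_{\Gamma_O}(\Omega)$, uniformly in $R$. Since $\boldsymbol{V}_{\Gamma_O}(\Omega)$ is a closed, hence weakly closed, subspace of $\boldsymbol{H}^{1}(\Omega)$, from any sequence $R_k\to\infty$ I can extract a subsequence (not relabeled) with $\boldsymbol{v}_{R_k}\rightharpoonup\boldsymbol{v}^{*}$ weakly in $\boldsymbol{V}_{\Gamma_O}(\Omega)$ for some limit $\boldsymbol{v}^{*}$. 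By the Rellich--Kondrachov theorem the embedding $\boldsymbol{H}^{1}(\Omega)\hookrightarrow\boldsymbol{L}^{2}(\Omega)$ is compact, so in addition $\boldsymbol{v}_{R_k}\to\boldsymbol{v}^{*}$ strongly in $\boldsymbol{L}^{2}(\Omega)$. Next I would pin down the limit on the solid part: the second estimate of Proposition~\ref{prop1} gives $\|\boldsymbol{v}_{R_k}\|_{0,\Omega_S}\le(\nu/R_k)\,|\boldsymbol{g}|_{1,\Omega}\to 0$, so $\boldsymbol{v}_{R_k}\to\boldsymbol{0}$ in $\boldsymbol{L}^{2}(\Omega_S)$, and comparing with the strong $L^{2}$ limit forces $\boldsymbol{v}^{*}=\boldsymbol{0}$ a.e.\ in $\Omega_S$.

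The main obstacle is the penalization term $R(\boldsymbol{v}_{R},\boldsymbol{w})_{0,\Omega_S}$ in \eqref{VF1}, whose coefficient diverges; I would circumvent it by restricting the test functions to $\boldsymbol{w}\in\boldsymbol{V}_{\Gamma_O}(\Omega)$ that vanish on $\Omega_S$, that is, the extensions by zero of elements of $\boldsymbol{V}_{\Gamma_O}(\Omega_F)$. For such $\boldsymbol{w}$ the penalization term vanishes identically and \eqref{VF1} reduces to $\nu(\nabla\boldsymbol{v}_{R},\nabla\boldsymbol{w})_{0,\Omega}=-\nu(\nabla\boldsymbol{g},\nabla\boldsymbol{w})_{0,\Omega}$. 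The weak convergence of $\nabla\boldsymbol{v}_{R_k}$ in $\boldsymbol{L}^{2}(\Omega)$ then lets me pass to the limit, obtaining $\nu(\nabla\boldsymbol{v}^{*},\nabla\boldsymbol{w})_{0,\Omega}=-\nu(\nabla\boldsymbol{g},\nabla\boldsymbol{w})_{0,\Omega}$. Because $\boldsymbol{v}^{*}$ and $\boldsymbol{w}$ both vanish on $\Omega_S$ (the former by the previous step, so in particular its trace on the obstacle boundaries is zero), the restriction $\boldsymbol{v}^{*}|_{\Omega_F}$ belongs to $\boldsymbol{V}_{\Gamma_O}(\Omega_F)$ and the identity is exactly the variational formulation on $\Omega_F$ that characterizes $\boldsymbol{v}=\boldsymbol{u}-\boldsymbol{g}$.

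Finally, coercivity of $(\nabla\,\cdot\,,\nabla\,\cdot\,)_{0,\Omega_F}$ on the divergence-free space $\boldsymbol{V}_{\Gamma_O}(\Omega_F)$ (Poincar\'e inequality, since its elements vanish on $\Gamma_I\cup\Gamma_{F,W}$, together with Lax--Milgram) yields uniqueness of the solution of this limit problem, so $\boldsymbol{v}^{*}=\boldsymbol{v}$. Since the limit is independent of the extracted subsequence, a standard subsequence argument upgrades the convergence to the whole family: $\boldsymbol{v}_{R}\rightharpoonup\boldsymbol{v}$ weakly in $\boldsymbol{V}_{\Gamma_O}(\Omega)$. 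I expect the delicate point to be not the compactness but the identification step, where the choice of solid-vanishing divergence-free test functions must simultaneously annihilate the divergent penalization term and, via the $L^{2}(\Omega_S)$ smallness of $\boldsymbol{v}_{R}$, guarantee that the limit satisfies the correct boundary and interface conditions of the unpenalized Stokes problem.
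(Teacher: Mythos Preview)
Your argument is correct and, for this proposition taken in isolation, more economical than the paper's. The paper does not restrict to test functions vanishing on $\Omega_S$; instead it keeps arbitrary $\boldsymbol{w}\in\boldsymbol{V}_{\Gamma_O}(\Omega)$, observes from \eqref{VF1} that the penalization term $R(\boldsymbol{v}_R,\boldsymbol{w})_{0,\Omega_S}$ must converge (since every other term does), and thereby identifies a weak limit $\boldsymbol{h}\in[\boldsymbol{V}_{\Gamma_O}(\Omega)]'$ of $\psi_R\boldsymbol{v}_R$, supported in $\overline{\Omega}_S$. It then invokes De Rham's theorem to recover a pressure $\tilde p$, verifies the do-nothing condition on $\Gamma_O$ explicitly, and appeals to uniqueness of the full Stokes system \eqref{StokesSystem}. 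Your route sidesteps both $\boldsymbol{h}$ and De Rham: by killing the divergent term with solid-vanishing divergence-free tests you land directly on the reduced variational problem on $\Omega_F$, and Lax--Milgram handles the identification.

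The trade-off is that the paper's extra objects are not incidental. The dual element $\boldsymbol{h}$ and the limit identity \eqref{FVtildev} are reused verbatim in the proofs of Theorems~\ref{stokes1} and~\ref{stokes2}: equation \eqref{FV_wR} is obtained by subtracting \eqref{FVtildev} from \eqref{VF1}, and one then tests with $\boldsymbol{w}_R=\boldsymbol{v}_R-\boldsymbol{v}$, which does \emph{not} vanish on $\Omega_S$, so the $\boldsymbol{h}$-representation (and later \eqref{h_and_k}) becomes essential. Your simpler proof of the proposition is fine, but if you proceed to the strong-convergence and rate estimates you will have to introduce that machinery after all.
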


\begin{proof}
By the result of Proposition \ref{prop1}, we see that there exists a subsequence $\boldsymbol{v}_R$ (we call it the same way) weakly convergent in $\boldsymbol{H}^1 (\Omega)$ to $\tilde{\boldsymbol{v}}$. Since $\tilde{\boldsymbol{v}} =0$ in $\Omega_S$, applying Trace Theorem (see Theorem II.4.1 in \cite{G11}), we can see that $\tilde{\boldsymbol{v}} = 0 $ on $\partial \Omega_S$.
Later, for all $\boldsymbol{w} \in \boldsymbol{V}_{\Gamma_O} (\Omega)$ we have
$$
(\psi_R \boldsymbol{v}_R , \boldsymbol{w})_{0, \Omega}  = - \Big( \nu (\nabla \boldsymbol{g}, \nabla \boldsymbol{w})_{0, \Omega} + \nu (\nabla \boldsymbol{v}_R, \nabla \boldsymbol{w})_{0, \Omega} \Big) \rightarrow  - \Big( \nu (\nabla \boldsymbol{g}, \nabla \boldsymbol{w})_{0, \Omega} + \nu (\nabla \tilde{\boldsymbol{v}}, \nabla \boldsymbol{w})_{0, \Omega} \Big)
$$
Hence $\psi_R \boldsymbol{v}_R$ converges weakly to some $\boldsymbol{h} \in [\boldsymbol{V}_{\Gamma_O} (\Omega)]'$, where $\text{supp} (\boldsymbol{h}) \subseteq \Omega$. Then,
\begin{equation} \label{FVtildev}
\nu (\nabla \tilde{\boldsymbol{v}}, \nabla \boldsymbol{w} )_{0, \Omega} + \langle \boldsymbol{h}, \boldsymbol{w} \rangle_{\boldsymbol{H}^{-1}(\Omega),\boldsymbol{H}^{1}(\Omega)} = - \nu (\nabla \boldsymbol{g}, \nabla \boldsymbol{w})_{0, \Omega}
\end{equation}
Since $\boldsymbol{v}_R = 0$ on $\Gamma_{I} \cup \Gamma_{W}$, we have $\tilde{\boldsymbol{v}} = 0$ on $\Gamma_{I} \cup \Gamma_{W}$ as well, by the continuity of the trace operator.
Now, applying the De Rham's Theorem (see Theorem I.2.3 in \cite{GR86}), there exists $\tilde{p} \in L_{0}^2 (\Omega)$ such that
\begin{alignat*}{2}
-\nu \triangle \tilde{\boldsymbol{v}} + \nabla \tilde{p} + \boldsymbol{h} & = \nu \triangle \boldsymbol{g} &\quad& \text{in } \Omega \\
\text{div} \tilde{\boldsymbol{v}} & =  0 & &\text{in } \Omega \\
\tilde{\boldsymbol{v}} & = \boldsymbol{0} & & \text{on } (\Gamma_{I} \cup \Gamma_{W}) \cap \partial \Omega_F
\end{alignat*}
Taking $(\boldsymbol{w}, q) \in \mathcal{H}(\Omega)$ such that $(\boldsymbol{w}, q) = (\boldsymbol{0}, 0)$ in $\Omega_S$, we have
\begin{equation} \label{VF2}
\nu (\nabla \tilde{\boldsymbol{v}}, \nabla \boldsymbol{w})_{0, \Omega_F} + \Big( - \nu \dfrac{\partial (\tilde{\boldsymbol{v}} + \boldsymbol{g})}{\partial \boldsymbol{n}} + p \boldsymbol{n} , \boldsymbol{w} \Big)_{0, \Gamma_{O}} = - \nu (\nabla \boldsymbol{g}, \nabla \boldsymbol{w})_{0, \Omega_F}
\end{equation}
and then
$$
-\dfrac{\partial (\tilde{\boldsymbol{v}} + \boldsymbol{g})}{\partial \boldsymbol{n}} + p \boldsymbol{n} = \boldsymbol{0}
$$
on $\Gamma_{O}$. Hence, $(\tilde{\boldsymbol{v}}, \tilde{p})$ is a weak solution for Equation \eqref{StokesSystem}. Since Equation \eqref{StokesSystem} has a unique solution, we conclude $(\tilde{\boldsymbol{v}}, \tilde{p}) = (\boldsymbol{v}, p)$. Finally, extending the solution by $(\boldsymbol{0},0)$ in $\Omega_S$, we have that for all $(\boldsymbol{w}, q) \in \mathcal{H} (\Omega)$, that is,
$$
\nu (\nabla \tilde{\boldsymbol{v}} , \nabla \boldsymbol{w})_{0, \Omega}  = - \nu (\nabla \boldsymbol{g} , \nabla \boldsymbol{w})_{0, \Omega}
$$
In conclusion, $(\tilde{\boldsymbol{v}}, \tilde{p}) = (\boldsymbol{v}, p)$ in $\Omega$ and $\tilde{\boldsymbol{v}}_R \rightharpoonup \boldsymbol{v}$ in $\boldsymbol{H}_{\Gamma_{O}}^1 (\Omega)$ as $R \rightarrow \infty$.
\end{proof}

\subsection{Main results}
Now we can establish the first convergence result.

\begin{thm}\label{stokes1}
Let $R>0$, $\boldsymbol{u}$ be solution of \eqref{StokesSystem} and $\boldsymbol{u}_{R}$ solution of
\eqref{Penalized_System}. With the previous assumptions, there is strong convergence of $\{\boldsymbol{u}_{R}%
\}_{R>0}$, that is
\[
\lim_{R\rightarrow\infty}\vert\boldsymbol{u}_{R}-\boldsymbol{u}\vert_{1,\Omega}=0
\]
and there there exists a constant $C > 0$ independent such that for all $R>0$
\[
\Vert\boldsymbol{u}-\boldsymbol{u}_{R}\Vert_{0,\Omega_S}\leq\dfrac{C}{R^{1/2}}%
\]

\end{thm}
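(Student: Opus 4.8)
The plan is to reduce everything to the auxiliary field $\boldsymbol{v}_R = \boldsymbol{u}_R - \boldsymbol{g}$ and its weak limit $\boldsymbol{v} = \boldsymbol{u} - \boldsymbol{g}$, so that $\boldsymbol{u}_R - \boldsymbol{u} = \boldsymbol{v}_R - \boldsymbol{v}$ and the seminorm $|\cdot|_{1,\Omega}$, which by Poincar\'e is a genuine norm on $\boldsymbol{V}_{\Gamma_O}(\Omega)$ (functions vanishing on $\Gamma_I\cup\Gamma_W$), controls the error. The $L^2$ estimate on $\Omega_S$ is then almost immediate: since $\boldsymbol{u}$ is extended by $\boldsymbol{0}$ and $\boldsymbol{g}$ is taken to vanish on $\Omega_S$, one has $\boldsymbol{u}-\boldsymbol{u}_R = -\boldsymbol{v}_R$ there, so Proposition \ref{prop1} gives directly
\[
\|\boldsymbol{u}-\boldsymbol{u}_R\|_{0,\Omega_S} = \|\boldsymbol{v}_R\|_{0,\Omega_S} \leq \frac{C}{R^{1/2}}.
\]

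For the strong $\boldsymbol{H}^1$ convergence I would run the standard ``weak convergence plus convergence of energies implies strong convergence'' argument. Testing \eqref{VF1} with $\boldsymbol{w}=\boldsymbol{v}_R$ gives the energy identity
\[
\nu|\boldsymbol{v}_R|_{1,\Omega}^2 + R\|\boldsymbol{v}_R\|_{0,\Omega_S}^2 = -\nu(\nabla\boldsymbol{g},\nabla\boldsymbol{v}_R)_{0,\Omega}.
\]
Since $\boldsymbol{v}_R\rightharpoonup\boldsymbol{v}$ in $\boldsymbol{V}_{\Gamma_O}(\Omega)$, the right-hand side converges to $-\nu(\nabla\boldsymbol{g},\nabla\boldsymbol{v})_{0,\Omega}$. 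Next I would compute the corresponding energy for the limit problem by testing the reduced formulation \eqref{VF2} with $\boldsymbol{v}$ itself: $\boldsymbol{v}$ is admissible (it vanishes on $\Omega_S$ and on $\Gamma_I\cup\Gamma_W$), the $\Gamma_O$ boundary term vanishes as established there, and because $\boldsymbol{v}$ and $\boldsymbol{g}$ vanish on $\Omega_S$ the $\Omega_F$ integrals equal the $\Omega$ integrals, yielding $\nu|\boldsymbol{v}|_{1,\Omega}^2 = -\nu(\nabla\boldsymbol{g},\nabla\boldsymbol{v})_{0,\Omega}$. Combining,
\[
\nu|\boldsymbol{v}_R|_{1,\Omega}^2 + R\|\boldsymbol{v}_R\|_{0,\Omega_S}^2 \longrightarrow \nu|\boldsymbol{v}|_{1,\Omega}^2.
\]

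Because both terms on the left are nonnegative, this forces $\limsup_R |\boldsymbol{v}_R|_{1,\Omega}^2 \leq |\boldsymbol{v}|_{1,\Omega}^2$ (and incidentally $R\|\boldsymbol{v}_R\|_{0,\Omega_S}^2\to 0$). Weak lower semicontinuity of the norm gives the reverse inequality $|\boldsymbol{v}|_{1,\Omega}\leq\liminf_R|\boldsymbol{v}_R|_{1,\Omega}$, hence $|\boldsymbol{v}_R|_{1,\Omega}\to|\boldsymbol{v}|_{1,\Omega}$. Finally, since $(\nabla\cdot,\nabla\cdot)_{0,\Omega}$ is an inner product on $\boldsymbol{V}_{\Gamma_O}(\Omega)$ inducing this norm, I expand
\[
|\boldsymbol{v}_R-\boldsymbol{v}|_{1,\Omega}^2 = |\boldsymbol{v}_R|_{1,\Omega}^2 - 2(\nabla\boldsymbol{v}_R,\nabla\boldsymbol{v})_{0,\Omega} + |\boldsymbol{v}|_{1,\Omega}^2,
\]
and let $R\to\infty$: the first term tends to $|\boldsymbol{v}|_{1,\Omega}^2$ and the middle one to $-2|\boldsymbol{v}|_{1,\Omega}^2$ by weak convergence, so the sum tends to $0$. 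This yields $|\boldsymbol{u}_R-\boldsymbol{u}|_{1,\Omega}=|\boldsymbol{v}_R-\boldsymbol{v}|_{1,\Omega}\to 0$.

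The only genuinely delicate point is the identification of the limiting energy, namely that the weak limit of the penalization term contributes nothing in the limit identity for $\boldsymbol{v}$. Working through the reduced variational formulation \eqref{VF2} on $\Omega_F$ sidesteps this, since only test functions vanishing on $\Omega_S$ appear and $\boldsymbol{v}$ is among them; equivalently one may argue through \eqref{FVtildev}, using that $\boldsymbol{h}$ is supported in $\overline{\Omega_S}$ while $\boldsymbol{v}|_{\Omega_S}=\boldsymbol{0}$. Everything else is the routine Hilbert-space upgrade from weak to strong convergence, and the $L^2$ rate is inherited verbatim from Proposition \ref{prop1}.
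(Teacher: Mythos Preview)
Your argument is correct but follows a different route from the paper's. The paper subtracts the two variational formulations \eqref{VF1} and \eqref{FVtildev} to obtain an equation for $\boldsymbol{w}_R=\boldsymbol{v}_R-\boldsymbol{v}$ of the form
\[
\nu(\nabla\boldsymbol{w}_R,\nabla\boldsymbol{w})_{0,\Omega}+R(\boldsymbol{w}_R,\boldsymbol{w})_{0,\Omega_S}=\langle\boldsymbol{h},\boldsymbol{w}\rangle_{V',V},
\]
tests with $\boldsymbol{w}=\boldsymbol{w}_R$, and uses the already established weak convergence $\boldsymbol{w}_R\rightharpoonup\boldsymbol{0}$ to conclude that the right-hand side tends to zero; both conclusions then drop out at once. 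You instead run the Radon--Riesz style argument: energy identity for $\boldsymbol{v}_R$, identification of the limiting energy via the Stokes problem on $\Omega_F$, and the Hilbert-space upgrade from weak to strong convergence. Your route has the pleasant feature that the $L^2(\Omega_S)$ rate is read off directly from Proposition~\ref{prop1} without reference to $\boldsymbol{h}$, whereas the paper's subtraction argument packages the strong convergence and the rate into a single identity and is the one that generalizes verbatim to Theorem~\ref{stokes2}, where the same identity is re-used with $\langle\boldsymbol{h},\boldsymbol{w}_R\rangle$ replaced by the boundary pairing $(\boldsymbol{k},\boldsymbol{w}_R)_{0,\partial\Omega_S}$. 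One small caution on your ``equivalently'' remark: the bare statement that $\operatorname{supp}\boldsymbol{h}\subseteq\overline{\Omega_S}$ and $\boldsymbol{v}|_{\Omega_S}=\boldsymbol{0}$ does not by itself force $\langle\boldsymbol{h},\boldsymbol{v}\rangle=0$ for a general element of $V'$; the clean justification is rather that $\langle\boldsymbol{h},\boldsymbol{v}\rangle=\lim_R R(\boldsymbol{v}_R,\boldsymbol{v})_{0,\Omega_S}=0$ since $\boldsymbol{v}$ vanishes on $\Omega_S$. Your primary route through \eqref{VF2} avoids this issue entirely.
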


\begin{proof}
Let $\boldsymbol{w}_R = \boldsymbol{v}_R - \boldsymbol{v}$. Subtracting the variational formulations \eqref{VF1} and \eqref{FVtildev} for $\boldsymbol{v}_R$ and $\tilde{\boldsymbol{v}}$, respectively, we obtain for all $\boldsymbol{w} \in \boldsymbol{V}$:
\begin{equation} \label{FV_wR}
\nu (\nabla \boldsymbol{w}_R, \nabla \boldsymbol{w})_{0, \Omega} + R (\boldsymbol{w}_R , \boldsymbol{w})_{0, \Omega_S} =  \langle \boldsymbol{h}, \boldsymbol{w}\rangle_{\boldsymbol{V}', \boldsymbol{V}}
\end{equation}
since $\boldsymbol{v} = \boldsymbol{g} = \boldsymbol{0}$ in $\Omega_S$. Taking $\boldsymbol{w} = \boldsymbol{w}_R$ and using that $\boldsymbol{w}_R \rightharpoonup 0$ in $\boldsymbol{V}$ as $R \rightarrow \infty$, we obtain
$$
\nu \vert\boldsymbol{w}_R\vert_{1, \Omega}^2 + R \Vert \boldsymbol{w}_R\Vert_{0, \Omega_S}^2 = \langle \boldsymbol{h}, \boldsymbol{w}_R \rangle_{\boldsymbol{V}', \boldsymbol{V}} \rightarrow 0
$$
proving the theorem.
\end{proof}

Imposing more regularity to $\boldsymbol{u}_{D}$ and $\partial\Omega_F$, the first convergence theorem can be upgraded to this new result.

\begin{thm}\label{stokes2}
Let $R>0$, $\boldsymbol{u}$ be solution of \eqref{StokesSystem} and $\boldsymbol{u}_{R}$ solution of \eqref{Penalized_System}. With the previous assumptions, where we assume in addition that $\partial\Omega_F$ is piecewise $\mathcal{C}^{2}$ class and
$\boldsymbol{u}_{D} \in H^{3/2} (\Gamma_I)$, then there is strong convergence of $\{\boldsymbol{u}_{R}\}_{R>0}$ in $\boldsymbol{H}^{1}(\Omega)$ when $R\rightarrow\infty$. Furthermore, there exists a constant $C > 0$ independent of $R$ such that for all $R>0$
\[
\vert \boldsymbol{u} - \boldsymbol{u}_{R} \vert_{1, \Omega} \leq\dfrac{C}{R^{1/4}},
\qquad\Vert \boldsymbol{u} - \boldsymbol{u}_{R} \Vert_{0, \Omega_S} \leq\dfrac
{C}{R^{3/4}} .
\]

\end{thm}

\begin{proof}
We can assume that the function $\boldsymbol{g}$ given by Lemma \ref{relevo1} is now in $\boldsymbol{H}^{2}\left(
\Omega\right)  $. Hence, results about regularity of solution to the Stokes equations (see Theorem IV.6.1 in \cite{G11}) allow us to consider $\left(  \boldsymbol{v},p\right)  \in
\boldsymbol{H}^{2}\left(  \Omega\right)  \times\boldsymbol{H}^{1}\left(
\Omega\right)  $. Let us replace the Dirichlet condition of $\boldsymbol{u}$ in (6) on $\partial \Omega_S \setminus \Gamma$ by
\[
-\nu\dfrac{\partial\boldsymbol{u}}{\partial\boldsymbol{n}}+p\boldsymbol{n} = \boldsymbol{k}.
\]
Then, for $\boldsymbol{v} = \boldsymbol{u} - \boldsymbol{g}$,
\[
\boldsymbol{k}=-\nu\dfrac{\partial\boldsymbol{v}+\boldsymbol{g}}%
{\partial\boldsymbol{n}}+p\boldsymbol{n}\text{\quad on }\partial\Omega_S \setminus \Gamma .
\]
where $\boldsymbol{k}\in\boldsymbol{H}^{1/2}(\Gamma_O)$. For all $\boldsymbol{w} \in \boldsymbol{H}_{(\partial \Omega_S \setminus \Gamma) \cup \Gamma_O}^1 (\Omega_F)$:
\[
\nu\left(  \nabla\boldsymbol{v}%
,\nabla\boldsymbol{w}\right)  _{0,\Omega_F}+\left(  \boldsymbol{k}%
,\boldsymbol{w}\right)  _{0,\partial\Omega_S}=-\nu\left(  \nabla
\boldsymbol{g},\nabla\boldsymbol{w}\right)  _{0,\Omega_F}%
\]
Since
\[
\left(  \forall\boldsymbol{w}\in\boldsymbol{H}_{\Gamma_O}^{1}\left(
\Omega\right)  \right)  \text{\qquad}\nu\left(  \nabla\boldsymbol{v}%
,\nabla\boldsymbol{w}\right)  _{0,\Omega}+\left\langle \boldsymbol{h}%
,\boldsymbol{w}\right\rangle _{V^{\prime},V}=-\nu\left(  \nabla\boldsymbol{g}%
,\nabla\boldsymbol{w}\right)  _{0,\Omega}%
\]
and considering $\boldsymbol{v}=\boldsymbol{g}=\boldsymbol{0}$ in $\Omega_S$, then we have
\begin{equation} \label{h_and_k}
\left(  \forall\boldsymbol{w}\in\boldsymbol{H}_{\Gamma_O}^{1}\left(
\Omega\right)  \right)  \text{\qquad}\left\langle \boldsymbol{h}%
,\boldsymbol{w}\right\rangle _{V^{\prime},V}=\left(  \boldsymbol{k}%
,\boldsymbol{w}\right)  _{0,\partial\Omega_S \setminus \Gamma}%
\end{equation}
Then, from \eqref{FV_wR} and \eqref{h_and_k}, applying H\"{o}lder and Cauchy-Schwartz inequalities, we have
\begin{align*}
\nu\left\vert \boldsymbol{w}_{R}\right\vert _{1,\Omega}^{2}+R\left\Vert
\boldsymbol{w}_{R}\right\Vert _{0,\Omega_S}^{2}  & =\left(  \boldsymbol{k}%
,\boldsymbol{w}_{R}\right)  _{0,\partial\Omega_S}\\
& \leq C\left\Vert \boldsymbol{k}\right\Vert _{0,\partial\Omega_S}\left\vert
\boldsymbol{w}_{R}\right\vert _{1,\Omega_S}^{1/2}\left\Vert \boldsymbol{w}%
_{R}\right\Vert _{0,\Omega_S}^{1/2}\\
& \leq\dfrac{\left(  C\left\Vert \boldsymbol{k}\right\Vert _{0,\partial
\Omega_S}\right)  ^{2}}{2\left(  \nu R\right)  ^{1/2}}+\dfrac{1}{4}\left(
\nu\left\vert \boldsymbol{w}_{R}\right\vert _{1,\Omega}^{2}+R\left\Vert
\boldsymbol{w}_{R}\right\Vert _{0,\Omega_S}^{2}\right)
\end{align*}
where $C>0$ is a constant independent of $R$. Then,
\[
\nu\left\vert \boldsymbol{w}_{R}\right\vert _{1,\Omega}^{2}+R\left\Vert
\boldsymbol{w}_{R}\right\Vert _{0,\Omega_S}^{2}\leq\dfrac{2\left(
C\left\Vert \boldsymbol{k}\right\Vert _{0,\partial\Omega_S}\right)  ^{2}%
}{3\left(  \nu R\right)  ^{1/2}}%
\]
In conclusion, $\left\vert \boldsymbol{w}_{R}\right\vert _{1,\Omega
}=\mathcal{O}\left(  R^{-1/4}\right)  $ and $\left\Vert \boldsymbol{w}%
_{R}\right\Vert _{0,\Omega_S}=\mathcal{O}\left(  R^{-3/4}\right)  $, proving this theorem.
\end{proof}

\section{Navier-Stokes equation with Dirichlet boundary conditions}\label{ns}

\subsection{Previous results}

Let us consider $\Omega, \Omega_F$ and $\Omega_S$ as described in Section~\ref{stokes}. Let $(\boldsymbol{u}, p) \in\mathcal{H} (\Omega_F)$ a solution of the Navier-Stokes system with Dirichlet boundary conditions over $\Omega_F$

\begin{alignat}{2}  %
-\nu\triangle\boldsymbol{u}+\left(  \nabla\boldsymbol{u}\right)\boldsymbol{u}+\nabla p & = \boldsymbol{0} &\quad& \text{in }\Omega_F\label{NS}\\
\operatorname{div}\boldsymbol{u} & = 0 & &\text{in }\Omega_F\nonumber\\
\boldsymbol{u} & = \boldsymbol{u}_{D} & & \text{on }\Gamma_{I}\cup\Gamma
_{O}\nonumber\\
\boldsymbol{u} & = \boldsymbol{0} & &\text{on } \partial\Omega_F\setminus\left(  \Gamma_{I} \cup\Gamma_{O}\right)\nonumber
\end{alignat}
and for all $R>0$, let $(\boldsymbol{u}_{R}, p_{R}) \in\mathcal{H} (\Omega)$
a solution of the following modified Navier-Stokes system over $\Omega$, with a $L^{2}$
penalization term, given by

\begin{alignat}{2}
-\nu\triangle\boldsymbol{u}_{R}+\left(  \nabla\boldsymbol{u}_{R}\right)
\boldsymbol{u}_{R} +\nabla p_{R} +R\chi_{\Omega_S}\boldsymbol{u}_{R}  & = \boldsymbol{0} &\quad& \text{in }\Omega\label{NS_R}\\
\operatorname{div}\boldsymbol{u}_R & = 0 & &\text{in }\Omega_F\nonumber\\
\boldsymbol{u}_R & = \boldsymbol{u}_{D} & & \text{on }\Gamma_{I}\cup\Gamma_{O}\nonumber\\
\boldsymbol{u}_R & = \boldsymbol{0} & &\text{on } \Gamma_{W}\nonumber
\end{alignat}
provided $\boldsymbol{u}_{D}\in\boldsymbol{H}^{1/2}\left(  \Omega\right)  $
and
\[
\int_{\Gamma_{I}}\boldsymbol{u}_{D}\cdot\boldsymbol{n}\text{ }dS+\int%
_{\Gamma_{O}}\boldsymbol{u}_{D}\cdot\boldsymbol{n}\text{ }dS=0
\]

The uniqueness of solution of both problems is guaranteed under certain additional hypotheses. In order to establish those hypotheses, it is necessary to cite the following results.

\begin{thm}
\label{Ineq1_NS} There exists $\boldsymbol{g}\in H^{1}\left(  \Omega
_{f}\right)  $ such that $\operatorname{div}\boldsymbol{g}=0$, $\boldsymbol{g}%
=\boldsymbol{u}_{D}$ on $\Gamma_{I}\cup\Gamma_{0}$, $\boldsymbol{g}%
=\boldsymbol{0}$ on $\Gamma_{f} \setminus\left(  \Gamma_{I} \cup\Gamma
_{O}\right)  $ and
\[
\left(  \forall\boldsymbol{w}\in\boldsymbol{H}_{0}^{1}\left(  \Omega\right)
\right)  \qquad\left\vert \left(  \left(  \nabla\boldsymbol{g}\right)
\boldsymbol{w},\boldsymbol{w}\right)  \right\vert \leq\alpha\left\vert
\boldsymbol{w}\right\vert _{1,\Omega}^{2}%
\]
for all $\alpha\in\left(  0,\nu\right)  $.
\end{thm}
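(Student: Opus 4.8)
The plan is to reduce the statement to the classical Hopf-type lifting lemma for solenoidal extensions, which is precisely the tool (Lemma IV.2.3 in \cite{GR86}) already invoked in the proof of Lemma \ref{relevo1}. The essential new feature relative to Lemma \ref{relevo1} is that the smallness constant must be taken \emph{arbitrarily small}, namely any $\alpha \in (0,\nu)$; this is exactly what will later deliver coercivity of the convective trilinear form and hence uniqueness for the penalized Navier--Stokes problem \eqref{NS_R}.

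First I would check the compatibility condition. The global hypothesis $\int_{\Gamma_{I}} \boldsymbol{u}_{D} \cdot \boldsymbol{n}\, dS + \int_{\Gamma_{O}} \boldsymbol{u}_{D} \cdot \boldsymbol{n}\, dS = 0$ together with the requirement $\boldsymbol{g} = \boldsymbol{0}$ on $\Gamma_{f} \setminus (\Gamma_{I} \cup \Gamma_{O})$ shows that the prescribed Dirichlet datum carries zero net flux across the whole of $\partial \Omega_{f}$. Extending $\boldsymbol{u}_{D}$ by $\boldsymbol{0}$ on $\Gamma_{f} \setminus (\Gamma_{I} \cup \Gamma_{O})$ therefore produces a boundary datum $\boldsymbol{u}_{D}^{\ast} \in \boldsymbol{H}^{1/2}(\partial \Omega_{f})$ with $\int_{\partial \Omega_{f}} \boldsymbol{u}_{D}^{\ast} \cdot \boldsymbol{n}\, dS = 0$, so that the solenoidal-lifting theory is applicable.

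Next I would apply the lifting lemma on $\Omega_{f}$. For each $\alpha \in (0,\nu)$, Lemma IV.2.3 in \cite{GR86} yields $\boldsymbol{g} \in \boldsymbol{H}^{1}(\Omega_{f})$ with $\operatorname{div} \boldsymbol{g} = 0$ in $\Omega_{f}$, $\boldsymbol{g} = \boldsymbol{u}_{D}^{\ast}$ on $\partial \Omega_{f}$ (hence $\boldsymbol{g} = \boldsymbol{u}_{D}$ on $\Gamma_{I} \cup \Gamma_{O}$ and $\boldsymbol{g} = \boldsymbol{0}$ elsewhere on $\partial \Omega_{f}$), and $|((\nabla \boldsymbol{g}) \boldsymbol{w}, \boldsymbol{w})| \leq \alpha |\boldsymbol{w}|_{1,\Omega_{f}}^{2}$ for all $\boldsymbol{w} \in \boldsymbol{H}^{1}(\Omega_{f})$. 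Since $\boldsymbol{g}$ vanishes on $\partial \Omega_{f} \setminus (\Gamma_{I} \cup \Gamma_{O})$, which in particular contains the interior interface $\partial \Omega_{S}$, I would extend $\boldsymbol{g}$ by $\boldsymbol{0}$ into $\Omega_{S}$, exactly as in the Remark following Lemma \ref{relevo1}. The extension remains in $\boldsymbol{H}^{1}(\Omega)$ and divergence-free, and for any $\boldsymbol{w} \in \boldsymbol{H}_{0}^{1}(\Omega)$ the integrand $(\nabla \boldsymbol{g}) \boldsymbol{w} \cdot \boldsymbol{w}$ is supported in $\Omega_{f}$; since $|\boldsymbol{w}|_{1,\Omega_{f}} \leq |\boldsymbol{w}|_{1,\Omega}$, the bound over $\Omega$ follows.

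The main obstacle is the construction underlying the smallness bound, that is, producing $\alpha$ as small as we please. This is the delicate core of the Hopf lemma: one represents the solenoidal extension as the $\operatorname{curl}$ of a potential $\boldsymbol{\phi}$, multiplies by a cut-off $\theta_{\delta}$ equal to $1$ on $\partial \Omega_{f}$ and supported in a boundary strip of width $\delta$, and sets $\boldsymbol{g}_{\delta} = \operatorname{curl}(\theta_{\delta}\,\boldsymbol{\phi})$, which is automatically divergence-free and retains the trace. A Hardy-type inequality then controls $\boldsymbol{g}_{\delta}$ by the distance to the boundary and bounds the trilinear term by $C\delta\,|\boldsymbol{w}|_{1,\Omega_{f}}^{2}$; choosing $\delta$ small enough that $C\delta \leq \alpha$ establishes the estimate for every $\alpha \in (0,\nu)$. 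I would either quote this construction directly from \cite{GR86}, or, for a self-contained argument, reproduce the boundary-strip cut-off together with the Hardy estimate.
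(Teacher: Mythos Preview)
Your proposal is correct and follows the same route as the paper: both reduce the claim to the classical Hopf solenoidal-lifting lemma, which the paper simply cites as Lemma IV.2.3 in \cite{GR86} (together with Lemma IX.4.2 in \cite{G11}) without further argument. You have additionally supplied the compatibility check, the zero-extension to $\Omega_{S}$, and a sketch of the boundary-strip/Hardy construction behind the smallness constant, all of which the paper leaves implicit in its one-line citation.
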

\begin{proof}
See Lemma IV.2.3 in \cite{GR86} and Lemma IX.4.2 in \cite{G11}.
\end{proof}

\begin{thm}
\label{Ineq2_NS} There exists a constant $\kappa>0$ only depending on $\Omega
$, such that
\begin{align*}
\left(  \forall\left(  \boldsymbol{u},\boldsymbol{v},\boldsymbol{w}\right)
\in\boldsymbol{H}_{0}^{1}\left(  \Omega\right)  \times\boldsymbol{H}%
^{1}\left(  \Omega\right)  \times\boldsymbol{H}_{0}^{1}\left(  \Omega\right)
\right)  \qquad\left\vert \left(  \left(  \nabla\boldsymbol{v}\right)
\boldsymbol{u},\boldsymbol{w}\right)  \right\vert  &  \leq\kappa\left\vert
\boldsymbol{u}\right\vert _{1,\Omega}\left\vert \boldsymbol{v}\right\vert
_{1,\Omega}\left\vert \boldsymbol{w}\right\vert _{1,\Omega}\\
\left(  \forall\left(  \boldsymbol{u},\boldsymbol{v},\boldsymbol{w}\right)
\in\boldsymbol{H}^{1}\left(  \Omega\right)  \times\boldsymbol{H}^{1}\left(
\Omega\right)  \times\boldsymbol{H}_{0}^{1}\left(  \Omega\right)  \right)
\qquad\left\vert \left(  \left(  \nabla\boldsymbol{v}\right)  \boldsymbol{u}%
,\boldsymbol{w}\right)  \right\vert  &  \leq\kappa\left\Vert \boldsymbol{u}%
\right\Vert _{1,\Omega}\left\vert \boldsymbol{v}\right\vert _{1,\Omega
}\left\vert \boldsymbol{w}\right\vert _{1,\Omega}%
\end{align*}

\end{thm}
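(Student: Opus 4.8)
The plan is to prove both estimates by the standard argument for the Navier--Stokes trilinear form: expand the convective term componentwise, apply H\"older's inequality with the exponent triple $(4,2,4)$, and then invoke the Sobolev embedding $H^1(\Omega)\hookrightarrow L^4(\Omega)$, which is continuous for $d\in\{2,3\}$ because $\Omega$ is bounded with piecewise $\mathcal{C}^1$ boundary.

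First I would write the form in components,
\[
\left((\nabla\boldsymbol{v})\boldsymbol{u},\boldsymbol{w}\right)
=\sum_{i,j=1}^{d}\int_{\Omega}u_{j}\,\frac{\partial v_{i}}{\partial x_{j}}\,w_{i}\,dx .
\]
For each summand, H\"older's inequality with exponents $4,2,4$ (which satisfy $\tfrac14+\tfrac12+\tfrac14=1$) gives
\[
\left|\int_{\Omega}u_{j}\,\frac{\partial v_{i}}{\partial x_{j}}\,w_{i}\,dx\right|
\leq\|u_{j}\|_{0,4,\Omega}\,\left\|\frac{\partial v_{i}}{\partial x_{j}}\right\|_{0,\Omega}\,\|w_{i}\|_{0,4,\Omega}.
\]

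Next I would let $C_{S}$ denote the embedding constant such that $\|\phi\|_{0,4,\Omega}\leq C_{S}\|\phi\|_{1,\Omega}$ for every scalar $\phi\in H^1(\Omega)$, sum over the finitely many indices $i,j$, and apply the discrete Cauchy--Schwarz inequality to the index sums. This yields
\[
\left|\left((\nabla\boldsymbol{v})\boldsymbol{u},\boldsymbol{w}\right)\right|
\leq C_{S}^{2}\,\|\boldsymbol{u}\|_{1,\Omega}\,|\boldsymbol{v}|_{1,\Omega}\,\|\boldsymbol{w}\|_{1,\Omega},
\]
where only the full gradient of $\boldsymbol{v}$ (hence its seminorm) appears because the middle factor is a derivative. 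This is already the second inequality once I use the Poincar\'e inequality on $\boldsymbol{w}\in\boldsymbol{H}_0^1(\Omega)$ to bound $\|\boldsymbol{w}\|_{1,\Omega}\leq C_{P}|\boldsymbol{w}|_{1,\Omega}$. For the first inequality I would additionally apply Poincar\'e to $\boldsymbol{u}\in\boldsymbol{H}_0^1(\Omega)$, replacing $\|\boldsymbol{u}\|_{1,\Omega}$ by $C_{P}|\boldsymbol{u}|_{1,\Omega}$; collecting the constants into a single $\kappa$ depending only on $\Omega$ finishes the proof.

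There is no genuinely hard step here. The only points requiring care are the dimensional restriction---the $L^4$ embedding is what forces $d\leq 4$, so it is comfortably valid for $d\in\{2,3\}$---and the fact that the Poincar\'e inequality may be applied only to the factors lying in $\boldsymbol{H}_0^1(\Omega)$, which is exactly why the first estimate controls $\boldsymbol{u}$ by its seminorm while the second must retain the full norm $\|\boldsymbol{u}\|_{1,\Omega}$.
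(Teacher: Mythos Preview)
Your proof is correct and follows essentially the same approach as the paper, which simply records the result as a direct consequence of H\"older's inequality and the Sobolev Embedding Theorem. You have merely written out the standard details (the $(4,2,4)$ exponent split, the $H^1\hookrightarrow L^4$ embedding, and Poincar\'e on the $\boldsymbol{H}_0^1$ factors) that the paper leaves implicit.
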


\begin{proof}
Direct consequence of Hölder inequalty and Sobolev Embedding Theorem. 
\end{proof}

Then, as for the Stokes problem, we consider the extension of $\boldsymbol{g}$
to $\boldsymbol{H}^{1}\left(  \Omega\right)  $ such that $\boldsymbol{g}%
=\boldsymbol{0}$ en $\Omega_S$. Let us define $\boldsymbol{v}=\boldsymbol{u}%
-\boldsymbol{g}$, so we have the following equation
\begin{alignat}{2}
-\nu\triangle\boldsymbol{v}+\left(  \nabla\boldsymbol{v}\right)
\boldsymbol{v}+\left(  \nabla\boldsymbol{v}\right)  \boldsymbol{g}+\left(
\nabla\boldsymbol{g}\right)  \boldsymbol{v}+\nabla p & = \nu\triangle
\boldsymbol{g}-\left(  \nabla\boldsymbol{g}\right)  \boldsymbol{g} &\quad&\text{in }\Omega_F\label{NS_relevo}\\
\operatorname{div}\boldsymbol{v} & = 0 & & \text{in }\Omega_F\nonumber\\
\boldsymbol{v} & = \boldsymbol{0} & &\text{on }\partial\Omega_F\nonumber
\end{alignat}
where we extend $(\boldsymbol{v}, p) \in\mathcal{H}(\Omega) $ by
$(\boldsymbol{0}, 0)$. In addition, let $\boldsymbol{v}_{R}=\boldsymbol{u}%
_{R}-\boldsymbol{g}$, so we have:

\begin{alignat}{2} %
-\nu\triangle\boldsymbol{v}_{R}+\left(  \nabla\boldsymbol{v}_{R}\right)\boldsymbol{v}_{R}+\left(  \nabla\boldsymbol{v}_{R}\right)  \boldsymbol{g}
+\left(  \nabla\boldsymbol{g}\right)  \boldsymbol{v}_{R}+\nabla p_{R}+R\chi_{\Omega}\boldsymbol{v}_{R} & =  \nu\triangle\boldsymbol{g}-\left(\nabla\boldsymbol{g}\right)  \boldsymbol{g} &\quad & \text{in }\Omega\label{NS_penalized_relevo}\\
\operatorname{div}\boldsymbol{v}_{R} & =  0 &      &\text{in }\Omega\nonumber\\
      \boldsymbol{v}_{R} & = \boldsymbol{0} &      & \text{on }\partial\Omega\nonumber
\end{alignat}

\begin{remark}
Defining the constant $C\geq0$ given by
\[
C=\nu\left\Vert \boldsymbol{g}\right\Vert _{1,\Omega_F}+\kappa\left\Vert
\boldsymbol{g}\right\Vert _{1,\Omega_F}^{2}%
\]
we can proceed similarly than Section IV.2 in \cite{GR86} and conclude that the solutions of  \eqref{NS} and \eqref{NS_relevo} are unique provided
\[
\dfrac{C\kappa}{\left(  \nu-\alpha\right)  ^{2}}<1 .
\]
\end{remark}
Repeating the same arguments as in Section~\ref{stokes}, it is possible to obtain the same convergence results deduced in Theorems~\eqref{stokes1} and \eqref{stokes2}. The first step is the uniformly boundedness of $\{\boldsymbol{v}_{R}\}_{R>0}$

\begin{prop} \label{prop1}
There exists a constant $C>0$ only depending of $\boldsymbol{g}$ such that
\[
\left\vert \boldsymbol{v}_{R}\right\vert _{1,\Omega}\leq\dfrac{C}{\nu-\alpha
}\text{\qquad}R\left\Vert \boldsymbol{v}_{R}\right\Vert _{0,\Omega_S}%
^{2}\leq\dfrac{C^{2}}{\nu-\alpha}%
\]

\end{prop}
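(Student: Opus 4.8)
The plan is to test the weak form of the lifted penalized system \eqref{NS_penalized_relevo} against $\boldsymbol{v}_R$ itself, exploiting the skew-symmetry of the convective trilinear form together with the smallness bound built into the lifting $\boldsymbol{g}$. Since the homogeneous Dirichlet datum in \eqref{NS_penalized_relevo} gives $\boldsymbol{v}_R\in\boldsymbol{V}(\Omega)=\boldsymbol{H}_0^1(\Omega)\cap\boldsymbol{H}_{\operatorname{div}}(\Omega)$, it is an admissible divergence-free test function, so the pressure term disappears and (with the penalty coefficient $R\chi_{\Omega_S}$) the choice $\boldsymbol{w}=\boldsymbol{v}_R$ yields
\[
\nu|\boldsymbol{v}_R|_{1,\Omega}^2 + ((\nabla\boldsymbol{v}_R)\boldsymbol{v}_R,\boldsymbol{v}_R) + ((\nabla\boldsymbol{v}_R)\boldsymbol{g},\boldsymbol{v}_R) + ((\nabla\boldsymbol{g})\boldsymbol{v}_R,\boldsymbol{v}_R) + R\|\boldsymbol{v}_R\|_{0,\Omega_S}^2 = -\nu(\nabla\boldsymbol{g},\nabla\boldsymbol{v}_R) - ((\nabla\boldsymbol{g})\boldsymbol{g},\boldsymbol{v}_R).
\]

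The key step is the correct accounting of the four convective contributions. Two of them vanish by the standard identity $b(\boldsymbol{z},\boldsymbol{v}_R,\boldsymbol{v}_R)=0$, valid whenever $\operatorname{div}\boldsymbol{z}=0$ and $\boldsymbol{v}_R\in\boldsymbol{H}_0^1$: taking $\boldsymbol{z}=\boldsymbol{v}_R$ kills $((\nabla\boldsymbol{v}_R)\boldsymbol{v}_R,\boldsymbol{v}_R)$, and taking $\boldsymbol{z}=\boldsymbol{g}$ (which is divergence-free) kills $((\nabla\boldsymbol{v}_R)\boldsymbol{g},\boldsymbol{v}_R)$. The surviving term $((\nabla\boldsymbol{g})\boldsymbol{v}_R,\boldsymbol{v}_R)$ does \emph{not} vanish, but it is precisely the quantity controlled by Theorem~\ref{Ineq1_NS}, namely $|((\nabla\boldsymbol{g})\boldsymbol{v}_R,\boldsymbol{v}_R)|\leq\alpha|\boldsymbol{v}_R|_{1,\Omega}^2$; this is the whole reason the lifting is constructed with $\alpha<\nu$, since the term is then absorbed into the leading viscous term. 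After absorption I am left with
\[
(\nu-\alpha)|\boldsymbol{v}_R|_{1,\Omega}^2 + R\|\boldsymbol{v}_R\|_{0,\Omega_S}^2 \leq \nu|\boldsymbol{g}|_{1,\Omega}\,|\boldsymbol{v}_R|_{1,\Omega} + |((\nabla\boldsymbol{g})\boldsymbol{g},\boldsymbol{v}_R)|.
\]

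For the right-hand side I would estimate the first term by Cauchy--Schwarz and bound the second by the second inequality of Theorem~\ref{Ineq2_NS} applied with the three arguments equal to $\boldsymbol{g},\boldsymbol{g},\boldsymbol{v}_R$, giving $|((\nabla\boldsymbol{g})\boldsymbol{g},\boldsymbol{v}_R)|\leq\kappa\|\boldsymbol{g}\|_{1,\Omega_F}^2|\boldsymbol{v}_R|_{1,\Omega}$. Since $\boldsymbol{g}=\boldsymbol{0}$ in $\Omega_S$, these two bounds collect exactly into the constant $C=\nu\|\boldsymbol{g}\|_{1,\Omega_F}+\kappa\|\boldsymbol{g}\|_{1,\Omega_F}^2$ introduced in the Remark, so that
\[
(\nu-\alpha)|\boldsymbol{v}_R|_{1,\Omega}^2 + R\|\boldsymbol{v}_R\|_{0,\Omega_S}^2 \leq C\,|\boldsymbol{v}_R|_{1,\Omega}.
\]

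To finish, I would first discard the nonnegative penalty term and divide by $|\boldsymbol{v}_R|_{1,\Omega}$ to obtain $|\boldsymbol{v}_R|_{1,\Omega}\leq C/(\nu-\alpha)$, the first assertion; then, reinserting this bound on the right of the last display and this time discarding the (nonnegative) $H^1$-seminorm term on the left, I get $R\|\boldsymbol{v}_R\|_{0,\Omega_S}^2\leq C\cdot C/(\nu-\alpha)=C^2/(\nu-\alpha)$, the second assertion. The only genuinely delicate point is the bookkeeping of the convective terms described above: recognizing which two vanish by skew-symmetry and that the remaining one is exactly the small term guaranteed by the Hopf-type lifting of Theorem~\ref{Ineq1_NS}, which must be moved to the left rather than estimated on the right. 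Everything else is Cauchy--Schwarz together with the continuity bounds of Theorem~\ref{Ineq2_NS}.
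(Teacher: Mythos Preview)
Your proof is correct and follows essentially the same route as the paper: test the weak form of \eqref{NS_penalized_relevo} with $\boldsymbol{w}=\boldsymbol{v}_R$, use the divergence-free skew-symmetry to kill $((\nabla\boldsymbol{v}_R)\boldsymbol{v}_R,\boldsymbol{v}_R)$ and $((\nabla\boldsymbol{v}_R)\boldsymbol{g},\boldsymbol{v}_R)$, absorb $((\nabla\boldsymbol{g})\boldsymbol{v}_R,\boldsymbol{v}_R)$ via Theorem~\ref{Ineq1_NS}, and bound the right-hand side by Theorem~\ref{Ineq2_NS} to produce exactly the constant $C=\nu\|\boldsymbol{g}\|_{1,\Omega_F}+\kappa\|\boldsymbol{g}\|_{1,\Omega_F}^2$. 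Your final two-step extraction of the bounds (drop the penalty term, divide; then reinsert and drop the seminorm term) is the same conclusion the paper reaches, just stated a bit more explicitly.
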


\begin{proof}
Let us testing first equation of \eqref{NS_relevo} by $\boldsymbol{w} \in \boldsymbol{V} (\Omega_F)$. Then,
\begin{align*}
%\left(  \forall\left(  \boldsymbol{w},q\right)  \in\mathcal{H}\left(\Omega_F\right)  \right)  \text{\qquad}  &
\nu\left(  \nabla\boldsymbol{v},\nabla\boldsymbol{w}\right)  _{0,\Omega_F}
+\left(  \left(  \nabla
\boldsymbol{v}\right)  \boldsymbol{v},\boldsymbol{w}\right)  _{0,\Omega_F
} + \left(  \left(  \nabla \boldsymbol{v}\right)  \boldsymbol{g},\boldsymbol{w}\right)  _{0,\Omega_F}
+\left(  \left(  \nabla\boldsymbol{g}\right)  \boldsymbol{v},\boldsymbol{w}\right)  _{0,\Omega_F}
%-\left(  p,\operatorname{div}\boldsymbol{w}\right)_{0,\Omega_F}+\left(  q,\operatorname{div}\boldsymbol{v}\right)_{0,\Omega_F}\\
%&
=-\nu\left(  \nabla\boldsymbol{g},\nabla\boldsymbol{w}\right)_{0,\Omega_F}-\left(  \left(  \nabla\boldsymbol{g}\right)  \boldsymbol{g},\boldsymbol{w}\right)  _{0,\Omega_F}%
\end{align*}
Considering the extension to $\Omega$, we have that for all $\boldsymbol{w}\in\boldsymbol{V}\left(\Omega\right) $:
\begin{align*}
\nu\left(  \nabla\boldsymbol{v}%
,\nabla\boldsymbol{w}\right)  _{0,\Omega}+\left(  \left(  \nabla
\boldsymbol{v}\right)  \boldsymbol{v},\boldsymbol{w}\right)  _{0,\Omega
}+\left(  \left(  \nabla\boldsymbol{v}\right)  \boldsymbol{g},\boldsymbol{w}\right)  _{0,\Omega}+\left(  \left(  \nabla\boldsymbol{g}\right)
\boldsymbol{v},\boldsymbol{w}\right)  _{0,\Omega}%-\left(  p,\operatorname{div}\boldsymbol{w}\right)  _{0,\Omega}+\left(  q,\operatorname{div}\boldsymbol{v}\right)  _{0,\Omega}\\
%&
=-\nu\left(  \nabla\boldsymbol{g},\nabla\boldsymbol{w}\right)
_{0,\Omega_F}-\left(  \left(  \nabla\boldsymbol{g}\right)  \boldsymbol{g}%
,\boldsymbol{w}\right)  _{0,\Omega_F}%
\end{align*}
Testing the penalized equation \eqref{NS_penalized_relevo} by $\boldsymbol{w} \in \boldsymbol{V} (\Omega)$, we obtain
\begin{align}
&\nu\left(  \nabla\boldsymbol{v}%
_{R},\nabla\boldsymbol{w}\right)  _{0,\Omega}+R\left(  \boldsymbol{v}%
_{R},\boldsymbol{w}\right)  _{0,\Omega_S}+\left(  \left(  \nabla\boldsymbol{v}_R\right)  \boldsymbol{v}_{R}%
,\boldsymbol{w}\right)  _{0,\Omega}+\left(  \left(  \nabla
\boldsymbol{v}_{R}\right)  \boldsymbol{g},\boldsymbol{w}\right)  _{0,\Omega
}+\left(  \left(  \nabla\boldsymbol{g}\right)  \boldsymbol{v}_{R}%
,\boldsymbol{w}\right)  _{0,\Omega}\nonumber\\
=&-\nu\left(  \nabla\boldsymbol{g},\nabla\boldsymbol{w}\right)
_{0,\Omega_F}-\left(  \left(  \nabla\boldsymbol{g}\right)  \boldsymbol{g}%
,\boldsymbol{w}\right)  _{0,\Omega_F}\label{FV_NS_penalized}
\end{align}
Taking $\boldsymbol{w} = \boldsymbol{v}_{R}$, we have
$$
\left(  \left(  \nabla\boldsymbol{v}%
_{R}\right)  \boldsymbol{g},\boldsymbol{v}_R \right)  _{0,\Omega}=\left(  \left(  \nabla\boldsymbol{v}%
_{R}\right)  \boldsymbol{v}_R,\boldsymbol{v}_R \right)  _{0,\Omega}=0
$$
since $\operatorname{div} \boldsymbol{g} = 0$, and then
\begin{align*}
\nu\left\vert \boldsymbol{v}_{R}\right\vert _{1,\Omega}^{2}+\left(  \left(
\nabla\boldsymbol{g}\right)  \boldsymbol{v}_{R},\boldsymbol{v}_{R}\right)
_{0,\Omega}+R\left\Vert \boldsymbol{v}_{R}\right\Vert _{0,\Omega_S}^{2} &
=-\nu\left(  \nabla\boldsymbol{g},\nabla\boldsymbol{v}_R\right)  _{0,\Omega_F%
}-\left(  \left(  \nabla\boldsymbol{g}\right)  \boldsymbol{g},\boldsymbol{v}_R%
\right)  _{0,\Omega_F}
\end{align*}
and due Theorems \ref{Ineq1_NS} and \ref{Ineq2_NS},
\begin{align*}
\left(  \nu-\alpha\right)  \left\vert \boldsymbol{v}_{R}\right\vert
_{1,\Omega}^{2}+R\left\Vert \boldsymbol{v}_{R}\right\Vert _{0,\Omega_S}^{2}
&  \leq\left(  \nu\left\Vert \boldsymbol{g}\right\Vert _{1,\Omega_F}%
+\kappa\left\Vert \boldsymbol{g}\right\Vert _{1,\Omega_F}^{2}\right)
\left\vert \boldsymbol{v}_{R}\right\vert _{1,\Omega}%
\end{align*}
Hence, defining
\[
C=\nu\left\Vert \boldsymbol{g}\right\Vert _{1,\Omega_F}+\kappa\left\Vert
\boldsymbol{g}\right\Vert _{1,\Omega_F}^{2}%
\]
we conclude
\[
\left\vert \boldsymbol{v}_{R}\right\vert _{1,\Omega}\leq\dfrac{C}{\nu-\alpha
}\text{\qquad}R\left\Vert \boldsymbol{v}_{R}\right\Vert _{0,\Omega_S}%
^{2}\leq\dfrac{C^{2}}{\nu-\alpha}%
\]
\end{proof}

The second step is to prove the weakly convergence of $\{\boldsymbol{v}_{R}\}_{R>0}$

\begin{prop}
$\boldsymbol{v}_{R} $ converges weakly to $\boldsymbol{v}$ in $\boldsymbol{V}
(\Omega)$.
\end{prop}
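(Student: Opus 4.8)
The plan is to mirror the Stokes proposition that precedes Theorem \ref{stokes1}, the genuinely new difficulty being the passage to the limit in the nonlinear convective terms, which forces us to upgrade weak $\boldsymbol{H}^1$ convergence to strong $\boldsymbol{L}^2$ convergence through a compactness argument. First I would use the uniform bound $\left\vert\boldsymbol{v}_R\right\vert_{1,\Omega}\leq C/(\nu-\alpha)$ from Proposition \ref{prop1}: the family $\{\boldsymbol{v}_R\}$ is bounded in the Hilbert space $\boldsymbol{V}(\Omega)$, so a subsequence (not relabeled) satisfies $\boldsymbol{v}_R\rightharpoonup\tilde{\boldsymbol{v}}$ weakly in $\boldsymbol{H}^1(\Omega)$, and by the Rellich--Kondrachov theorem the same subsequence converges strongly in $\boldsymbol{L}^2(\Omega)$ and, via the Sobolev embedding, in $\boldsymbol{L}^4(\Omega)$. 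Since $R\left\Vert\boldsymbol{v}_R\right\Vert_{0,\Omega_S}^2\leq C^2/(\nu-\alpha)$, we obtain $\left\Vert\boldsymbol{v}_R\right\Vert_{0,\Omega_S}\to 0$, hence $\tilde{\boldsymbol{v}}=\boldsymbol{0}$ in $\Omega_S$ and, by the trace theorem, $\tilde{\boldsymbol{v}}=\boldsymbol{0}$ on $\partial\Omega_S$; the homogeneous Dirichlet condition on $\partial\Omega$ is preserved by continuity of the trace, so $\tilde{\boldsymbol{v}}\in\boldsymbol{V}(\Omega)$.

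Next I would pass to the limit in the weak formulation \eqref{FV_NS_penalized} against a fixed $\boldsymbol{w}\in\boldsymbol{V}(\Omega)$, term by term. The viscous term $\nu(\nabla\boldsymbol{v}_R,\nabla\boldsymbol{w})_{0,\Omega}$ converges by weak $\boldsymbol{H}^1$ convergence, and the right-hand side is independent of $R$. For the convective terms I use the antisymmetry $((\nabla\boldsymbol{a})\boldsymbol{b},\boldsymbol{c})=-((\nabla\boldsymbol{c})\boldsymbol{b},\boldsymbol{a})$, valid when $\operatorname{div}\boldsymbol{b}=0$ and traces vanish, rewriting $((\nabla\boldsymbol{v}_R)\boldsymbol{v}_R,\boldsymbol{w})=-((\nabla\boldsymbol{w})\boldsymbol{v}_R,\boldsymbol{v}_R)$; the strong $\boldsymbol{L}^4$ convergence then makes $\boldsymbol{v}_R\otimes\boldsymbol{v}_R$ converge to $\tilde{\boldsymbol{v}}\otimes\tilde{\boldsymbol{v}}$ against the fixed factor $\nabla\boldsymbol{w}$. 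The mixed terms $((\nabla\boldsymbol{v}_R)\boldsymbol{g},\boldsymbol{w})$ and $((\nabla\boldsymbol{g})\boldsymbol{v}_R,\boldsymbol{w})$ are treated the same way, combining strong $\boldsymbol{L}^2$ (or $\boldsymbol{L}^4$) convergence of $\boldsymbol{v}_R$ with the fixed factors $\boldsymbol{g}$ and $\boldsymbol{w}$. Finally, isolating the penalty term $R(\boldsymbol{v}_R,\boldsymbol{w})_{0,\Omega_S}$ shows it equals a combination of quantities that all converge, so it converges to some $\langle\boldsymbol{h},\boldsymbol{w}\rangle_{\boldsymbol{V}',\boldsymbol{V}}$, defining $\boldsymbol{h}\in\boldsymbol{V}(\Omega)'$ with support in $\overline{\Omega}_S$.

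Once the limit identity
\begin{align*}
&\nu(\nabla\tilde{\boldsymbol{v}},\nabla\boldsymbol{w})_{0,\Omega}+((\nabla\tilde{\boldsymbol{v}})\tilde{\boldsymbol{v}},\boldsymbol{w})_{0,\Omega}+((\nabla\tilde{\boldsymbol{v}})\boldsymbol{g},\boldsymbol{w})_{0,\Omega}+((\nabla\boldsymbol{g})\tilde{\boldsymbol{v}},\boldsymbol{w})_{0,\Omega}+\langle\boldsymbol{h},\boldsymbol{w}\rangle \\
&=-\nu(\nabla\boldsymbol{g},\nabla\boldsymbol{w})_{0,\Omega_F}-((\nabla\boldsymbol{g})\boldsymbol{g},\boldsymbol{w})_{0,\Omega_F}
\end{align*}
holds for all $\boldsymbol{w}\in\boldsymbol{V}(\Omega)$, I would restrict to test functions with $\boldsymbol{w}=\boldsymbol{0}$ in $\Omega_S$, which annihilates $\langle\boldsymbol{h},\boldsymbol{w}\rangle$ and shows that $\tilde{\boldsymbol{v}}|_{\Omega_F}$ is a weak solution of \eqref{NS_relevo}; together with $\tilde{\boldsymbol{v}}=\boldsymbol{0}$ in $\Omega_S$ this identifies $\tilde{\boldsymbol{v}}$ with a solution of the extended problem. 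Invoking the uniqueness guaranteed by the smallness condition $C\kappa/(\nu-\alpha)^2<1$ from the Remark, I conclude $\tilde{\boldsymbol{v}}=\boldsymbol{v}$; since the weak limit is the same for every subsequence, the whole family converges, $\boldsymbol{v}_R\rightharpoonup\boldsymbol{v}$ in $\boldsymbol{V}(\Omega)$. The main obstacle is exactly the quadratic convective term: the entire argument hinges on promoting the weak $\boldsymbol{H}^1$ bound to strong $\boldsymbol{L}^4$ convergence through the compact embedding, since without it the term $((\nabla\boldsymbol{v}_R)\boldsymbol{v}_R,\boldsymbol{w})$ cannot be passed to the limit.
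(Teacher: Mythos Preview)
Your proposal is correct and follows essentially the same route as the paper: extract a weakly convergent subsequence from the uniform bound, use compact Sobolev embeddings to pass to the limit in the convective terms, identify the limit $\tilde{\boldsymbol{v}}$ as a weak solution of \eqref{NS_relevo} by testing against functions vanishing in $\Omega_S$, and invoke uniqueness. The only cosmetic difference is that you rewrite $((\nabla\boldsymbol{v}_R)\boldsymbol{v}_R,\boldsymbol{w})=-((\nabla\boldsymbol{w})\boldsymbol{v}_R,\boldsymbol{v}_R)$ and use strong $\boldsymbol{L}^4$ convergence, whereas the paper keeps the term as is and appeals to strong convergence in $\boldsymbol{L}^p(\Omega)$ for $p\in[2,6)$; both are standard and equivalent here.
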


\begin{proof}
From Proposition \ref{prop1}, we see that $\boldsymbol{v}_{R}$ is bounded in $\boldsymbol{H}_{\Gamma_{D}}^{1}\left(  \Omega\right)  =V$ and  $\chi_{\Omega_S}\boldsymbol{v}%
_{R}\rightarrow0$ as $R\rightarrow+\infty$. Then there exists a subsequence of $\boldsymbol{v}_{R}$ (denoted by the same way) that converges weakly in $\boldsymbol{H}^{1}\left(\Omega\right)  $ to a function $\boldsymbol{\tilde{v}} \in \boldsymbol{H}^1 (\Omega)$. In particular,  $\boldsymbol{\tilde{v}}=\boldsymbol{0}$ in $\Omega_S$. Moreover, by Trace Theorem, $\boldsymbol{\tilde{v}}=\boldsymbol{0}$ on $\partial\Omega_S$.
On the other hand, applying \eqref{FV_NS_penalized}, we have that for all $\boldsymbol{w} \in \boldsymbol{V} (\Omega)$:
\begin{align*}
\left(  R\chi_{\Omega_S}\boldsymbol{v}_{R},\boldsymbol{w}\right)
_{0,\Omega}  & =-\left[  \nu\left(  \nabla\left(  \boldsymbol{v}%
_{R}+\boldsymbol{g}\right)  ,\nabla\boldsymbol{w}\right)  _{0,\Omega}%
+\nu\left(  \left(  \nabla\left(  \boldsymbol{v}_{R}+\boldsymbol{g}\right)
\right)  \left(  \boldsymbol{v}_{R}+\boldsymbol{g}\right)  ,\nabla
\boldsymbol{w}\right)  _{0,\Omega}\right]  \\
& \rightarrow-\left[  \nu\left(  \nabla\left(  \boldsymbol{\tilde{v}%
}+\boldsymbol{g}\right)  ,\nabla\boldsymbol{w}\right)  _{0,\Omega}+\nu\left(
\left(  \nabla\left(  \boldsymbol{\tilde{v}}+\boldsymbol{g}\right)  \right)
\left(  \boldsymbol{\tilde{v}}+\boldsymbol{g}\right)  ,\nabla\boldsymbol{w}%
\right)  _{0,\Omega}\right]
\end{align*}
as $R\rightarrow \infty$, since $\boldsymbol{v}_{R}\rightarrow\boldsymbol{\tilde{v}}$ in $\boldsymbol{L}^{p}\left(  \Omega\right)  $, for $p\in\left[  2,6\right)  $.
Then $R\chi_{\Omega_S}\boldsymbol{v}_{R}$ converges weakly to a function
$\boldsymbol{h}\in\left[  \boldsymbol{V}\left(  \Omega\right)  \right]
^{\prime}$ such that $\operatorname{supp}\boldsymbol{h}\subseteq
\Omega_S$.
Then, taking the limit $R \rightarrow \infty$ in \eqref{FV_NS_penalized}, we have that for all $\boldsymbol{w} \in \boldsymbol{V} (\Omega)$:
\begin{equation} \label{FV_limit_Omega}
\nu\left(  \nabla\boldsymbol{\tilde{v}},\nabla\boldsymbol{w}\right)
_{0,\Omega}+\left(  \left(  \nabla\boldsymbol{\tilde{v}}\right)
\boldsymbol{\tilde{v}},\boldsymbol{w}\right)  _{0,\Omega}+\left(
\left(  \nabla\boldsymbol{\tilde{v}}\right)  \boldsymbol{g},\boldsymbol{w}%
\right)  _{0,\Omega}+\left(  \left(  \nabla\boldsymbol{g}\right)
\boldsymbol{\tilde{v}},\boldsymbol{w}\right)  _{0,\Omega}+\left\langle
\boldsymbol{h},\boldsymbol{w}\right\rangle _{V^{\prime},V}=-\nu\left(
\nabla\boldsymbol{g},\nabla\boldsymbol{w}\right)  _{0,\Omega_F}-\left(
\left(  \nabla\boldsymbol{g}\right)  \boldsymbol{g},\boldsymbol{w}\right)
_{0,\Omega_F}%
\end{equation}
Since $\boldsymbol{v}_{R}=0$ on $\partial\Omega$, we have $\boldsymbol{\tilde{v}}=\boldsymbol{0}$ on $\partial\Omega$ due the continuity of the trace operator. By the De Rham's Theorem, there exists $\tilde{p}\in L_{0}^{2}\left(
\Omega\right)  $ such that
%\begin{equation}
\begin{alignat*}{2}
-\nu\triangle\boldsymbol{\tilde{v}}+\left(  \nabla\boldsymbol{\tilde{v}
}\right)  \boldsymbol{\tilde{v}}+\left(  \nabla\boldsymbol{\tilde{v}}\right)
\boldsymbol{g}+\left(\nabla\boldsymbol{g}\right)  \boldsymbol{\tilde{v}}+\nabla\boldsymbol{\tilde{p}}+\boldsymbol{h} &  =  \nu\triangle\boldsymbol{g}
-\left(  \nabla\boldsymbol{g}\right)  \boldsymbol{g} &\quad&\text{in }\Omega\\
\operatorname{div}\boldsymbol{\tilde{v}} &  = 0 & &\text{in }\Omega\\
\boldsymbol{\tilde{v}} &  = \boldsymbol{0} & &\text{on }\partial\Omega
\end{alignat*}
%\end{equation}
And since $\operatorname{supp} \boldsymbol h \subseteq \Omega_S$, we have that for all $\boldsymbol{w} \in \boldsymbol{V} (\Omega)$ such that $\boldsymbol{w} = \boldsymbol{0}$ on $\Omega_S$,
\begin{align*}
& \nu\left(  \nabla\boldsymbol{\tilde{v}},\nabla\boldsymbol{w}\right)
_{0,\Omega_F}+\left(  \left(  \nabla\boldsymbol{\tilde{v}}\right)
\boldsymbol{\tilde{v}},\boldsymbol{w}\right)  _{0,\Omega_F}+\left(
\left(  \nabla\boldsymbol{\tilde{v}}\right)  \boldsymbol{g},\boldsymbol{w}%
\right)  _{0,\Omega_F}+\left(  \left(  \nabla\boldsymbol{g}\right)
\boldsymbol{\tilde{v}},\boldsymbol{w}\right)  _{0,\Omega_F}
=  & -\nu\left(  \nabla\boldsymbol{g},\nabla\boldsymbol{w}\right)
_{0,\Omega_F}-\left(  \left(  \nabla\boldsymbol{g}\right)  \boldsymbol{g}%
,\boldsymbol{w}\right)  _{0,\Omega_F}%
\end{align*}
so $(\tilde{\boldsymbol{v}}\vert_{\Omega_F}, \tilde{p}\vert_{\Omega_F})$ is a weak solution for \eqref{NS}. Since such a solution is unique, $\left(  \boldsymbol{\tilde{v}},\tilde{p}\right)
=\left(  \boldsymbol{v},p\right)  $ in $\Omega_F$.
%Extending by $\left(  \boldsymbol{0},0\right)  $ in $\Omega_S$, we have for all $\left(  \boldsymbol{w},q\right)  \in\mathcal{H}\left(  \Omega\right)  $%
%\begin{align*}
%& \nu\left(  \nabla\boldsymbol{\tilde{v}},\nabla\boldsymbol{w}\right)
%_{0,\Omega}+\left(  \left(  \nabla\boldsymbol{\tilde{v}}\right)
%\boldsymbol{\tilde{v}},\nabla\boldsymbol{w}\right)  _{0,\Omega}+\left(
%\left(  \nabla\boldsymbol{\tilde{v}}\right)  \boldsymbol{g},\boldsymbol{w}%
%\right)  _{0,\Omega}+\left(  \left(  \nabla\boldsymbol{g}\right)
%\boldsymbol{\tilde{v}},\boldsymbol{w}\right)  _{0,\Omega}-\left(  \tilde
%{p},\operatorname{div}\boldsymbol{w}\right)  _{0,\Omega}+\left(
%q,\operatorname{div}\boldsymbol{\tilde{v}}\right)  _{0,\Omega}\\
%=  & -\nu\left(  \nabla\boldsymbol{g},\nabla\boldsymbol{w}\right)
%_{0,\Omega_F}-\left(  \left(  \nabla\boldsymbol{g}\right)  \boldsymbol{g}%
%,\boldsymbol{w}\right)  _{0,\Omega_F}%
%\end{align*}
Therefore, $\left(  \boldsymbol{\tilde{v}},\tilde{p}\right)  =\left(
v,p\right)  $ in $\Omega$ and $\boldsymbol{v}_{R}\rightharpoonup
\boldsymbol{v}$ in $\boldsymbol{H}_{0}^{1}\left(  \Omega\right)  $.
\end{proof}

\subsection{Main results}
Finally, we can enunciate and prove the strong convergence results.
\begin{thm}
\label{thm_NS_1} Let $R>0$, $\boldsymbol{u}$ be solution of \eqref{NS_relevo} and $\boldsymbol{u}%
_{R}$ solution of \eqref{NS_penalized_relevo}. With the previous assumptions, there is strong
convergence of $\{\boldsymbol{u}_{R}\}_{R>0}$, i.e.,
\[
\lim_{R \rightarrow\infty} \left\vert \boldsymbol{u}_{R} - \boldsymbol{u} \right\vert_{1,\Omega} = 0
\]
and there exists $C>0$ such that for all $R>0$
\[
%\vert \boldsymbol{u} - \boldsymbol{u}_R \vert_{0, \Omega} \leq \dfrac{C}{R^{1/2}} , \qquad
\left\Vert \boldsymbol{u} - \boldsymbol{u}_{R} \right\Vert_{0, \Omega_S} \leq\dfrac{C}%
{R^{1/2}}
\]

\end{thm}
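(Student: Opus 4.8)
The plan is to mirror the argument used for the Stokes case (Theorem \ref{stokes1}), the genuinely new difficulty being the treatment of the convective trilinear terms. First I would set $\boldsymbol{w}_R = \boldsymbol{v}_R - \boldsymbol{v} = \boldsymbol{u}_R - \boldsymbol{u}$, which lies in $\boldsymbol{V}(\Omega)$ since both $\boldsymbol{v}_R$ and $\boldsymbol{v}$ are divergence-free and vanish on $\partial\Omega$. Subtracting the limit identity \eqref{FV_limit_Omega} from the penalized variational formulation \eqref{FV_NS_penalized} (their right-hand sides coincide), and using that $\boldsymbol{v} = \boldsymbol{0}$ in $\Omega_S$ so that $R(\boldsymbol{v}_R, \boldsymbol{w})_{0,\Omega_S} = R(\boldsymbol{w}_R, \boldsymbol{w})_{0,\Omega_S}$, one obtains for every $\boldsymbol{w} \in \boldsymbol{V}(\Omega)$ an error identity whose linear part reproduces \eqref{FV_wR} and whose nonlinear part collects the differences of the three convective terms.

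The key step is to take $\boldsymbol{w} = \boldsymbol{w}_R$ and exploit the skew-symmetry of the convective form. Expanding $((\nabla\boldsymbol{v}_R)\boldsymbol{v}_R, \boldsymbol{w}_R) - ((\nabla\boldsymbol{v})\boldsymbol{v}, \boldsymbol{w}_R)$ via $\boldsymbol{v}_R = \boldsymbol{v} + \boldsymbol{w}_R$, together with the two convective terms linear in $\boldsymbol{v}_R$, every contribution of the form $((\nabla\boldsymbol{w}_R)\boldsymbol{b}, \boldsymbol{w}_R)$ with $\boldsymbol{b} \in \{\boldsymbol{v}, \boldsymbol{g}, \boldsymbol{w}_R\}$ vanishes, because $\boldsymbol{b}$ is divergence-free and $\boldsymbol{w}_R \in \boldsymbol{V}(\Omega)$. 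Only $((\nabla\boldsymbol{v})\boldsymbol{w}_R, \boldsymbol{w}_R)$ and $((\nabla\boldsymbol{g})\boldsymbol{w}_R, \boldsymbol{w}_R)$ survive, giving
\[
\nu|\boldsymbol{w}_R|_{1,\Omega}^2 + R\|\boldsymbol{w}_R\|_{0,\Omega_S}^2 + ((\nabla\boldsymbol{v})\boldsymbol{w}_R, \boldsymbol{w}_R) + ((\nabla\boldsymbol{g})\boldsymbol{w}_R, \boldsymbol{w}_R) = \langle\boldsymbol{h}, \boldsymbol{w}_R\rangle_{V',V}.
\]
Bounding $|((\nabla\boldsymbol{g})\boldsymbol{w}_R, \boldsymbol{w}_R)| \leq \alpha|\boldsymbol{w}_R|_{1,\Omega}^2$ by Theorem \ref{Ineq1_NS} and $|((\nabla\boldsymbol{v})\boldsymbol{w}_R, \boldsymbol{w}_R)| \leq \kappa|\boldsymbol{v}|_{1,\Omega}|\boldsymbol{w}_R|_{1,\Omega}^2$ by Theorem \ref{Ineq2_NS} then yields
\[
(\nu - \alpha - \kappa|\boldsymbol{v}|_{1,\Omega})|\boldsymbol{w}_R|_{1,\Omega}^2 + R\|\boldsymbol{w}_R\|_{0,\Omega_S}^2 \leq |\langle\boldsymbol{h}, \boldsymbol{w}_R\rangle_{V',V}|.
\]

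The main obstacle is ensuring that the effective coercivity constant $\beta := \nu - \alpha - \kappa|\boldsymbol{v}|_{1,\Omega}$ is strictly positive; this is where the smallness hypothesis guaranteeing uniqueness enters. By Proposition \ref{prop1} applied to the limit $\boldsymbol{v}$ one has $|\boldsymbol{v}|_{1,\Omega} \leq C/(\nu-\alpha)$, so that $\kappa|\boldsymbol{v}|_{1,\Omega} \leq \kappa C/(\nu-\alpha) < \nu - \alpha$ precisely because $\kappa C/(\nu-\alpha)^2 < 1$, forcing $\beta > 0$. With $\beta$ fixed, strong convergence follows from the weak convergence $\boldsymbol{w}_R \rightharpoonup \boldsymbol{0}$ in $\boldsymbol{V}(\Omega)$ of the previous proposition: since $\boldsymbol{h} \in \boldsymbol{V}(\Omega)'$ is fixed, $\langle\boldsymbol{h}, \boldsymbol{w}_R\rangle_{V',V} \to 0$, whence $\beta|\boldsymbol{w}_R|_{1,\Omega}^2 + R\|\boldsymbol{w}_R\|_{0,\Omega_S}^2 \to 0$ and in particular $|\boldsymbol{u}_R - \boldsymbol{u}|_{1,\Omega} \to 0$. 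For the algebraic rate I would use the uniform bound $|\boldsymbol{w}_R|_{1,\Omega} \leq 2C/(\nu-\alpha)$ from Proposition \ref{prop1} to estimate $R\|\boldsymbol{w}_R\|_{0,\Omega_S}^2 \leq \|\boldsymbol{h}\|_{V'}\,|\boldsymbol{w}_R|_{1,\Omega} \leq C'$ uniformly in $R$, which gives $\|\boldsymbol{u} - \boldsymbol{u}_R\|_{0,\Omega_S} = \|\boldsymbol{w}_R\|_{0,\Omega_S} \leq C/R^{1/2}$.
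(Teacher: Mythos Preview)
Your proposal is correct and follows essentially the same approach as the paper: subtracting the two variational formulations, testing with $\boldsymbol{w}_R$, eliminating the skew-symmetric convective terms, and absorbing the remaining term $((\nabla(\boldsymbol{v}+\boldsymbol{g}))\boldsymbol{w}_R,\boldsymbol{w}_R)$ via Theorems~\ref{Ineq1_NS}--\ref{Ineq2_NS}, with the uniqueness hypothesis ensuring positivity of the effective coercivity constant (your $\beta$ is the paper's $\nu-c_2$). Your rate argument, bounding $R\|\boldsymbol{w}_R\|_{0,\Omega_S}^2\le\|\boldsymbol{h}\|_{V'}|\boldsymbol{w}_R|_{1,\Omega}$ via the uniform $H^1$ bound of Proposition~\ref{prop1}, makes explicit what the paper leaves implicit.
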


\begin{proof}
Let $\boldsymbol{w}_{R}=\boldsymbol{v}_{R}-\boldsymbol{v}$. From the variational formulations \eqref{FV_NS_penalized} and \eqref{FV_limit_Omega} we obtain that for all $\boldsymbol{w}\in\boldsymbol{V}\left(
\Omega\right)  $%
\begin{align*}
\nu\left(  \nabla\boldsymbol{w}_{R},\nabla\boldsymbol{w}\right)  _{0,\Omega
}+\left(  \left(  \nabla\boldsymbol{v}_{R}\right)  \boldsymbol{v}_{R}%
,\boldsymbol{w}\right)  _{0,\Omega}-\left(  \left(  \nabla
\boldsymbol{\tilde{v}}\right)  \boldsymbol{\tilde{v}},\boldsymbol{w}%
\right)  _{0,\Omega}+\left(  \left(  \nabla\boldsymbol{w}_{R}\right)
\boldsymbol{g},\boldsymbol{w}\right)  _{0,\Omega_F}+\left(  \left(
\nabla\boldsymbol{g}\right)  \boldsymbol{w}_{R},\boldsymbol{w}\right)
_{0,\Omega_F}  & \\
-\left\langle \boldsymbol{h},\boldsymbol{w}\right\rangle _{V^{\prime}%
,V}+R\left(  \boldsymbol{v}_{R},\boldsymbol{w}\right)  _{0,\Omega_S}  & =0
\end{align*}
Writing in terms of $\boldsymbol{w}_R$,
\begin{align*}
\nu\left(  \nabla\boldsymbol{w}_{R},\nabla\boldsymbol{w}\right)  _{0,\Omega
}+\left(  \left(  \nabla\boldsymbol{w}_{R}\right)  \boldsymbol{v}_{R}%
,\boldsymbol{w}\right)  _{0,\Omega}+\left(  \left(  \nabla
\boldsymbol{\tilde{v}}\right)  \boldsymbol{w}_{R},\boldsymbol{w}\right)
_{0,\Omega}+\left(  \left(  \nabla\boldsymbol{w}_{R}\right)  \boldsymbol{g}%
,\boldsymbol{w}\right)  _{0,\Omega_F}&\\
+\left(  \left(  \nabla\boldsymbol{g}\right)  \boldsymbol{w}_{R}%
,\boldsymbol{w}\right)  _{0,\Omega_F}+R\left(  \boldsymbol{w}_{R}%
,\boldsymbol{w}\right)  _{0,\Omega_S}&=\left\langle \boldsymbol{h}%
,\boldsymbol{w}\right\rangle _{V^{\prime},V}%
\end{align*}
Then we take $\boldsymbol{w}=\boldsymbol{w}_{R}$, and thus
\begin{equation} \label{Energy_wR}
\nu\left\vert \boldsymbol{w}_{R}\right\vert _{1,\Omega}^{2}+\left(  \left(
\nabla\left(  \boldsymbol{\tilde{v}}+\boldsymbol{g}\right)  \right)
\boldsymbol{w}_{R},\boldsymbol{w}_{R}\right)  _{0,\Omega}+R\left\Vert
\boldsymbol{w}_{R}\right\Vert _{0,\Omega_S}^{2}=\left\langle \boldsymbol{h}%
,\boldsymbol{w}_{R}\right\rangle _{V^{\prime},V}%
\end{equation}
Let $c_{2}=\alpha+\dfrac{C\kappa}{\nu-\alpha}$, then
\[
0<c_{2}=\alpha+\dfrac{C\kappa}{\nu-\alpha}<\alpha+\nu-\alpha=\nu
\]
Hence, from Theorems \ref{Ineq1_NS} and \ref{Ineq2_NS},
\begin{align*}
\left(  \left(  \nabla\left(  \boldsymbol{\tilde{v}}+\boldsymbol{g}\right)
\right)  \boldsymbol{w}_{R},\boldsymbol{w}_{R}\right)  _{0,\Omega} &
\leq\left(  \alpha+\kappa\left\vert \boldsymbol{\tilde{v}}\right\vert
_{1,\Omega}\right)  \left\vert \boldsymbol{w}_{R}\right\vert _{1,\Omega}^{2}\\
&  \leq\left(  \alpha+\dfrac{C\kappa}{\nu-\alpha}\right)  \left\vert
\boldsymbol{w}_{R}\right\vert _{1,\Omega}^{2}=c_{2}\left\vert \boldsymbol{w}%
_{R}\right\vert _{1,\Omega}^{2}%
\end{align*}
and using this inequality in \eqref{Energy_wR}, we have
\[
\left(  \nu-c_{2}\right)  \left\vert \boldsymbol{w}_{R}\right\vert _{1,\Omega
}^{2}+R\left\Vert \boldsymbol{w}_{R}\right\Vert _{0,\Omega_S}^{2}%
\leq\left\langle \boldsymbol{h},\boldsymbol{w}_{R}\right\rangle _{V^{\prime
},V}%
\]
Also we have,
\[
\left\langle \boldsymbol{h},\boldsymbol{w}_{R}\right\rangle _{V^{\prime}%
,V}=-\left[  \nu\left(  \nabla\left(  \boldsymbol{\tilde{v}}+\boldsymbol{g}%
\right)  ,\nabla\boldsymbol{w}_{R}\right)  _{0,\Omega}+\nu\left(  \left(
\nabla\left(  \boldsymbol{\tilde{v}}+\boldsymbol{g}\right)  \right)  \left(
\boldsymbol{\tilde{v}}+\boldsymbol{g}\right)  ,\nabla\boldsymbol{w}%
_{R}\right)  _{0,\Omega}\right]  \rightarrow0
\]
as $R \rightarrow \infty$. Therefore
\[
\left(  \nu-c_{2}\right)  \left\vert \boldsymbol{w}_{R}\right\vert _{1,\Omega
}^{2}+R\left\Vert \boldsymbol{w}_{R}\right\Vert _{0,\Omega_S}^{2}%
=\left\langle \boldsymbol{h},\boldsymbol{w}_{R}\right\rangle _{V^{\prime}%
,V}\rightarrow0
\]
so we have proved that  $\left\vert \boldsymbol{w}_{R}\right\vert _{1,\Omega}%
\rightarrow0$ and $\left\Vert \boldsymbol{w}_{R}\right\Vert _{0,\Omega_S%
}=\mathcal{O}\left(  R^{-1/2}\right)  $.
\end{proof}

\begin{thm}
\label{thm_NS_2} Let $R>0$, $\boldsymbol{u}$ be solution of \eqref{NS_relevo} and $\boldsymbol{u}%
_{R}$ solution of \eqref{NS_penalized_relevo}. With the previous assumptions, where we assume in
addition that $\partial\Omega_F$ is piecewise $\mathcal{C}^2$ and $\boldsymbol{u}_{D} \in H^{3/2} (\Omega)$, then there is strong
convergence of $\{u_{R}\}$ in $H^{1}(\Omega)$ and moreover there exists a constant $C>0$ such that for all $R>0$
\[
\vert \boldsymbol{u} - \boldsymbol{u}_{R} \vert_{1, \Omega} \leq\dfrac{C}{R^{1/4}} ,
\qquad \Vert \boldsymbol{u} - \boldsymbol{u}_{R}\Vert_{0, \Omega_S} \leq\dfrac
{C}{R^{3/4}}
\]

\end{thm}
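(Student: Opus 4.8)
The plan is to upgrade Theorem~\ref{thm_NS_1} exactly as Theorem~\ref{stokes2} upgrades Theorem~\ref{stokes1}; the convective terms have already been tamed in the proof of Theorem~\ref{thm_NS_1}, so the only genuinely new ingredient is a regularity boost that turns the functional $\boldsymbol h$ into a square-integrable boundary traction. The starting point is the energy identity \eqref{Energy_wR}, namely
\[
\nu\,|\boldsymbol w_R|_{1,\Omega}^2+\bigl((\nabla(\boldsymbol{\tilde v}+\boldsymbol g))\boldsymbol w_R,\boldsymbol w_R\bigr)_{0,\Omega}+R\,\|\boldsymbol w_R\|_{0,\Omega_S}^2=\langle\boldsymbol h,\boldsymbol w_R\rangle_{V',V},
\]
together with the coercivity bound established there, $\bigl((\nabla(\boldsymbol{\tilde v}+\boldsymbol g))\boldsymbol w_R,\boldsymbol w_R\bigr)_{0,\Omega}\le c_2\,|\boldsymbol w_R|_{1,\Omega}^2$ with $c_2=\alpha+\tfrac{C\kappa}{\nu-\alpha}<\nu$. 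These combine to
\[
(\nu-c_2)\,|\boldsymbol w_R|_{1,\Omega}^2+R\,\|\boldsymbol w_R\|_{0,\Omega_S}^2\le\langle\boldsymbol h,\boldsymbol w_R\rangle_{V',V},
\]
so the problem reduces to controlling the right-hand side, with the strictly positive constant $\nu-c_2$ now playing the role that $\nu$ played in Theorem~\ref{stokes2}.

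First I would raise the regularity. Under the extra hypotheses ($\partial\Omega_F$ piecewise $\mathcal C^2$, $\boldsymbol u_D\in H^{3/2}$) the lift $\boldsymbol g$ of Theorem~\ref{Ineq1_NS} may be taken in $\boldsymbol H^2(\Omega)$, whence the right-hand side $\nu\triangle\boldsymbol g-(\nabla\boldsymbol g)\boldsymbol g$ of \eqref{NS_relevo} lies in $\boldsymbol L^2(\Omega_F)$. Treating the convective terms of $\boldsymbol v$ as a forcing and bootstrapping through the Sobolev embeddings of Theorem~\ref{Ineq2_NS}, the stationary regularity theory (Theorem~IV.6.1 in \cite{G11}) yields $(\boldsymbol v,p)\in\boldsymbol H^2(\Omega_F)\times H^1(\Omega_F)$, so that the Cauchy traction
\[
\boldsymbol k=-\nu\frac{\partial(\boldsymbol v+\boldsymbol g)}{\partial\boldsymbol n}+p\boldsymbol n
\]
is well defined on $\partial\Omega_S$ and belongs to $\boldsymbol H^{1/2}(\partial\Omega_S)\subset\boldsymbol L^2(\partial\Omega_S)$. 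Integrating the limit formulation \eqref{FV_limit_Omega} by parts against $\boldsymbol w\in\boldsymbol V(\Omega)$, and using that $\boldsymbol{\tilde v}=\boldsymbol v$ solves \eqref{NS} on $\Omega_F$, that $\boldsymbol{\tilde v}=\boldsymbol g=\boldsymbol 0$ in $\Omega_S$, and that $\operatorname{supp}\boldsymbol h\subseteq\overline{\Omega}_S$, I obtain the Navier--Stokes analogue of \eqref{h_and_k},
\[
\langle\boldsymbol h,\boldsymbol w\rangle_{V',V}=(\boldsymbol k,\boldsymbol w)_{0,\partial\Omega_S}\qquad\text{for all }\boldsymbol w\in\boldsymbol V(\Omega).
\]

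With the right-hand side now a genuine boundary integral, I would close the estimate by the same Cauchy--Schwarz/trace-interpolation/Young chain as in Theorem~\ref{stokes2}. Applying Cauchy--Schwarz on $\partial\Omega_S$ and the multiplicative trace inequality $\|\boldsymbol w_R\|_{0,\partial\Omega_S}\le C\,|\boldsymbol w_R|_{1,\Omega_S}^{1/2}\|\boldsymbol w_R\|_{0,\Omega_S}^{1/2}$ (legitimate since $\boldsymbol w_R$ vanishes on $\partial\Omega$, so Poincar\'e controls the full norm by the seminorm) gives
\[
\langle\boldsymbol h,\boldsymbol w_R\rangle_{V',V}=(\boldsymbol k,\boldsymbol w_R)_{0,\partial\Omega_S}\le C\|\boldsymbol k\|_{0,\partial\Omega_S}\,|\boldsymbol w_R|_{1,\Omega}^{1/2}\,\|\boldsymbol w_R\|_{0,\Omega_S}^{1/2}.
\]
Writing $X=(\nu-c_2)|\boldsymbol w_R|_{1,\Omega}^2+R\|\boldsymbol w_R\|_{0,\Omega_S}^2$, the factor $|\boldsymbol w_R|_{1,\Omega}^{1/2}\|\boldsymbol w_R\|_{0,\Omega_S}^{1/2}$ equals $((\nu-c_2)R)^{-1/4}\bigl[(\nu-c_2)|\boldsymbol w_R|_{1,\Omega}^2\cdot R\|\boldsymbol w_R\|_{0,\Omega_S}^2\bigr]^{1/4}\le((\nu-c_2)R)^{-1/4}(X/2)^{1/2}$ by AM--GM, and Young's inequality then bounds the right-hand side by $\tfrac{(C\|\boldsymbol k\|_{0,\partial\Omega_S})^2}{2((\nu-c_2)R)^{1/2}}+\tfrac14 X$. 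Absorbing $\tfrac14 X$ into the left yields $X\le\tfrac{2(C\|\boldsymbol k\|_{0,\partial\Omega_S})^2}{3((\nu-c_2)R)^{1/2}}=\mathcal O(R^{-1/2})$, and reading off each summand of $X$ gives $|\boldsymbol u-\boldsymbol u_R|_{1,\Omega}=|\boldsymbol w_R|_{1,\Omega}=\mathcal O(R^{-1/4})$ and $\|\boldsymbol u-\boldsymbol u_R\|_{0,\Omega_S}=\|\boldsymbol w_R\|_{0,\Omega_S}=\mathcal O(R^{-3/4})$.

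The main obstacle is the second step: establishing the $\boldsymbol H^2\times H^1$ regularity and the representation $\langle\boldsymbol h,\cdot\rangle_{V',V}=(\boldsymbol k,\cdot)_{0,\partial\Omega_S}$ in the nonlinear setting. One must verify that the convective terms $(\nabla\boldsymbol v)\boldsymbol v$, $(\nabla\boldsymbol v)\boldsymbol g$ and $(\nabla\boldsymbol g)\boldsymbol v$ lie in $\boldsymbol L^2(\Omega_F)$ so the elliptic bootstrap applies (which is where the uniqueness/smallness condition $\tfrac{C\kappa}{(\nu-\alpha)^2}<1$ and the embeddings of Theorem~\ref{Ineq2_NS} enter), and that the limit functional $\boldsymbol h$ is genuinely the trace of the Cauchy stress from the fluid side. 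Once this is secured, the concluding trace-interpolation and Young computation is verbatim that of Theorem~\ref{stokes2}, with $\nu-c_2$ in place of $\nu$.
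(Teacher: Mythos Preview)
Your proposal is correct and follows essentially the same route as the paper: lift the regularity to $(\boldsymbol v,p)\in\boldsymbol H^2\times H^1$ (the paper invokes the Navier--Stokes regularity Theorem~IX.5.2 of \cite{G11} directly rather than bootstrapping through Stokes), identify $\boldsymbol h$ with the boundary traction $\boldsymbol k$, and close via the multiplicative trace inequality and Young exactly as in Theorem~\ref{stokes2}. Your consistent use of $\nu-c_2$ in the final Young step is in fact slightly cleaner than the paper, which mixes $\nu-c_2$ on the left with $\nu$ on the right when absorbing.
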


\begin{proof}
%Let $\boldsymbol{k}=-\nu\dfrac{\partial\boldsymbol{v}+\boldsymbol{g}}{\partial\boldsymbol{n}}+p\boldsymbol{n}+\dfrac{1}{2}\left(  \left(\boldsymbol{v}+\boldsymbol{g}\right)  \cdot\boldsymbol{n}\right)  \left(\boldsymbol{v}+\boldsymbol{g}\right)  $ on $\partial\Omega_S \setminus \Gamma$.
We can assume that $\boldsymbol{g}\in\boldsymbol{H}^{2}\left(
\Omega\right)  $ because $\boldsymbol{u}_{D} \in H^{3/2} (\Omega)$, then there is strong
convergence of $\{u_{R}\}$ in $H^{1}(\Omega)$. Reasoning as in Theorem \ref{stokes2}, we can apply a regularity result (see Theorem IX.5.2 in \cite{G11}) and consider $\left(  \boldsymbol{v},p\right)  \in
\boldsymbol{H}^{2}\left(  \Omega\right)  \times\boldsymbol{H}^{1}\left(
\Omega\right)  $. Defining $\boldsymbol{k}\in H^{1/2}\left(  \Omega\right)  $ by
\[
\boldsymbol{k}=-\nu\dfrac{\partial\boldsymbol{v}+\boldsymbol{g}}%
{\partial\boldsymbol{n}}+p\boldsymbol{n}+\dfrac{1}{2}\left(  \left(
\boldsymbol{v}+\boldsymbol{g}\right)  \cdot\boldsymbol{n}\right)  \left(
\boldsymbol{v}+\boldsymbol{g}\right)  = -\nu\dfrac{\partial\boldsymbol{v}+\boldsymbol{g}}%
{\partial\boldsymbol{n}}+p\boldsymbol{n}
\]
and taking $\boldsymbol{w}\in\boldsymbol{V}_{\partial \Omega_S \setminus \Gamma}\left(
\Omega_F \right)  $ extended by $\boldsymbol{0}$ to $\Omega_S$, we have
\begin{align*}
& \nu\left(  \nabla\boldsymbol{v},\nabla\boldsymbol{w}\right)  _{0,\Omega
}+\left(  \left(  \nabla\boldsymbol{v}\right)  \boldsymbol{v},
\boldsymbol{w}\right)  _{0,\Omega}+\left(  \left(  \nabla\boldsymbol{v}%
\right)  \boldsymbol{g},\boldsymbol{w}\right)  _{0,\Omega}+\left(  \left(
\nabla\boldsymbol{g}\right)  \boldsymbol{v},\boldsymbol{w}\right)  _{0,\Omega
}+\left(  \boldsymbol{k},\boldsymbol{w}\right)  _{0,\partial\Omega_S \setminus \Gamma}\\
=  & -\nu\left(  \nabla\boldsymbol{g},\nabla\boldsymbol{w}\right)
_{0,\Omega_F}-\left(  \left(  \nabla\boldsymbol{g}\right)  \boldsymbol{g}%
,\boldsymbol{w}\right)  _{0,\Omega_F}%
\end{align*}
Since
\begin{align*}
& \nu\left(  \nabla\boldsymbol{v},\nabla\boldsymbol{w}\right)  _{0,\Omega
}+\left(  \left(  \nabla\boldsymbol{v}\right)  \boldsymbol{v},
\boldsymbol{w}\right)  _{0,\Omega}+\left(  \left(  \nabla\boldsymbol{v}%
\right)  \boldsymbol{g},\boldsymbol{w}\right)  _{0,\Omega}+\left(  \left(
\nabla\boldsymbol{g}\right)  \boldsymbol{v},\boldsymbol{w}\right)  _{0,\Omega
}+\left\langle \boldsymbol{h},\boldsymbol{w}\right\rangle _{V^{\prime},V}\\
=  & -\nu\left(  \nabla\boldsymbol{g},\nabla\boldsymbol{w}\right)
_{0,\Omega_F}-\left(  \left(  \nabla\boldsymbol{g}\right)  \boldsymbol{g}%
,\boldsymbol{w}\right)  _{0,\Omega_F}%
\end{align*}
we have
\[
\left(  \forall\boldsymbol{w}\in\boldsymbol{V}_{\partial \Omega_S \setminus \Gamma}\left(  \Omega\right)
\right)  \text{\qquad}\left\langle \boldsymbol{h},\boldsymbol{w}\right\rangle
_{V^{\prime},V}=\left(  \boldsymbol{k},\boldsymbol{w}\right)  _{0,\partial\Omega_S \setminus \Gamma}%
\]
Hence, applying Trace Theorem, Hölder inequality and Sobolev Embedding Theorem (see Section 6.6 in \cite{C13}), there exists a constant $C>0$, independent of $R>0$, such that
\begin{align*}
\left(  \nu-c_{2}\right)  \left\vert \boldsymbol{w}_{R}\right\vert _{1,\Omega
}^{2}+R\left\Vert \boldsymbol{w}_{R}\right\Vert _{0,\Omega_S}^{2} &
\leq\left(  \boldsymbol{k},\boldsymbol{w}_{R}\right)  _{0,\partial\Omega_S \setminus \Gamma
}\\
&  \leq C\left\Vert \boldsymbol{k}\right\Vert _{0,\partial\Omega_S%
}\left\vert \boldsymbol{w}_{R}\right\vert _{1,\Omega_S}^{1/2}\left\Vert
\boldsymbol{w}_{R}\right\Vert _{0,\Omega_S}^{1/2}\\
&  \leq\dfrac{\left(  C\left\Vert \boldsymbol{k}\right\Vert _{0,\partial\Omega_S \setminus \Gamma}\right)  ^{2}}{2\left(  \nu R\right)  ^{1/2}}+\dfrac{1}{4}\left(
\nu\left\vert \boldsymbol{w}_{R}\right\vert _{1,\Omega}^{2}+R\left\Vert
\boldsymbol{w}_{R}\right\Vert _{0,\Omega_S}^{2}\right)
\end{align*}
which can be rewritten as
\[
\nu\left\vert \boldsymbol{w}_{R}\right\vert _{1,\Omega}^{2}+R\left\Vert
\boldsymbol{w}_{R}\right\Vert _{0,\Omega_S}^{2}\leq\dfrac{2\left(
C\left\Vert \boldsymbol{k}\right\Vert _{0,\partial\Omega_S \setminus \Gamma}\right)  ^{2}%
}{3\left(  \nu R\right)  ^{1/2}}%
\]
Therefore, $\left\vert \boldsymbol{w}_{R}\right\vert _{1,\Omega
}=\mathcal{O}\left(  R^{-1/4}\right)  $ and $\left\Vert \boldsymbol{w}%
_{R}\right\Vert _{0,\Omega_S}=\mathcal{O}\left(  R^{-3/4}\right)  $, proving this result.
\end{proof}

\section{Numerical examples}\label{num}
In this section, we report a simple 2D numerical experiment to validate the use of fictitious domains in the study of obstacles, and to verify the convergence orders obtained in Sections \ref{stokes} and \ref{ns}. \textcolor{black}{This experiment is motivated for numerical implementations performed in \cite{aguayo2020}, where the approach consists in reconstructing a potential via the minimization of a least-squares functional with a regularization term in order to reconstruct obstacles which could be either immersed or added to the virtual boundary domain.}

First, we consider the domain $\Omega=\Omega_F\cup\overline{\Omega}_S=(-2,2)\times(-1,1)$ given in Figure \ref{fig2} where $\Omega_F\cap\Omega_S=\emptyset$ and $\Omega_S$ is given by
$$
\Omega_S=(-1.1,-0.9)\times(0.4,1)\cup\{(x,y)\in\mathbb{R}^2\mid(x-1)^2+(y-0.5)^2=(0.3)^2\}
$$

\begin{figure}[H]
\centering\includegraphics[height=4.5cm,keepaspectratio]{./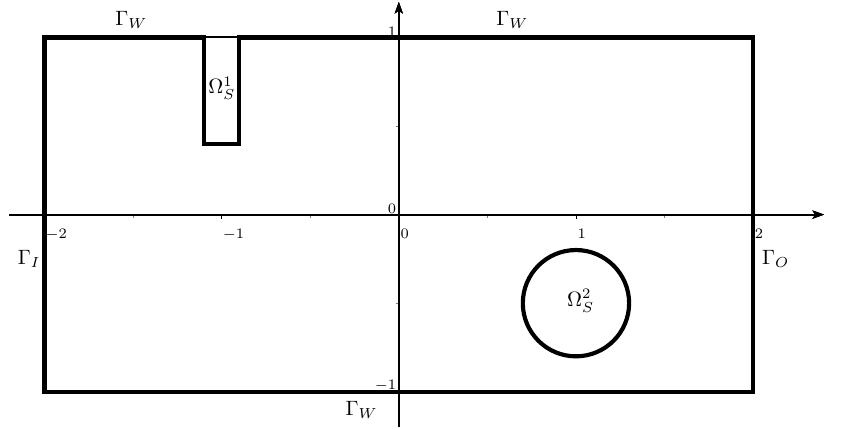}
\caption{Fictitious domain $\Omega$ with obstacles $\Omega_S^1$ and $\Omega_S^2$.}%
\label{fig2}%
\end{figure}

\textcolor{black}{This  example is representative for our purposes, since it consists of a domain with two types of obstacles: the first one, $\Omega_S^1$ is added to the boundary of the whole virtual domain, while the other obstacle, $\Omega_S^2$ is such that its adherence is totally embedded in the fluid.}

In order to determine the reference solutions $(\boldsymbol{u},p)$ for Stokes and Navier-Stokes equations, we consider the following boundary conditions for the domain $\Omega_F$:
\begin{itemize}
\item The inflow $\Gamma_{I}={-2}\times[-1,1]$ has a parabolic profile following Poiseuille's Law given by%
\[
\boldsymbol{u}_{D}\left(  x,y\right)  = - U(1+y)(1-y)  \boldsymbol{n},
\]
where $U>0$, $\boldsymbol{x}=(x,y)$ are the \textcolor{black}{Cartesian} coordinates of the domain and $\boldsymbol{n}$ is the outer normal vector.
\item The do-nothing conditions are imposed on the outflow $\Gamma_{O}={2}\times[-1,1]$, given by%
\[
-\nu\dfrac{\partial\boldsymbol{u}}{\partial\boldsymbol{n}}+p\boldsymbol{n}%
=\boldsymbol{0}.
\]
\item No-slip boundary condition for $\Gamma_{F,W}=\partial\Omega_S\setminus(\Gamma_I\cup\Gamma_O)$.
\end{itemize}
Given $R>0$, we use the same boundary conditions for $\Gamma_{I}={-2}\times[-1,1]$ and $\Gamma_{O}={2}\times[-1,1]$ to calculate the penalized solutions $(\boldsymbol{u}_R,p_R)$. The no-slip boundary condition is now applied to $\Gamma_F=[-2,2]\times\{-1,1\}$.

The numerical solutions of Stokes and Navier-Stokes equations are computed by the Finite Element Method (FEM) with Taylor-Hood elements ($\mathbb{P}_2$ for velocity and $\mathbb{P}_1$ for pressure) on an unstructured triangular mesh generated for $\Omega$ by domain triangulation with $h=0.05$, which corresponds to $8416$ elements and $4329$ nodes. The mesh was designed to approach obstacles $\Omega_S$ as smoothly as possible.

\begin{figure}[H]
\centering\includegraphics[height=4.5cm,keepaspectratio]{./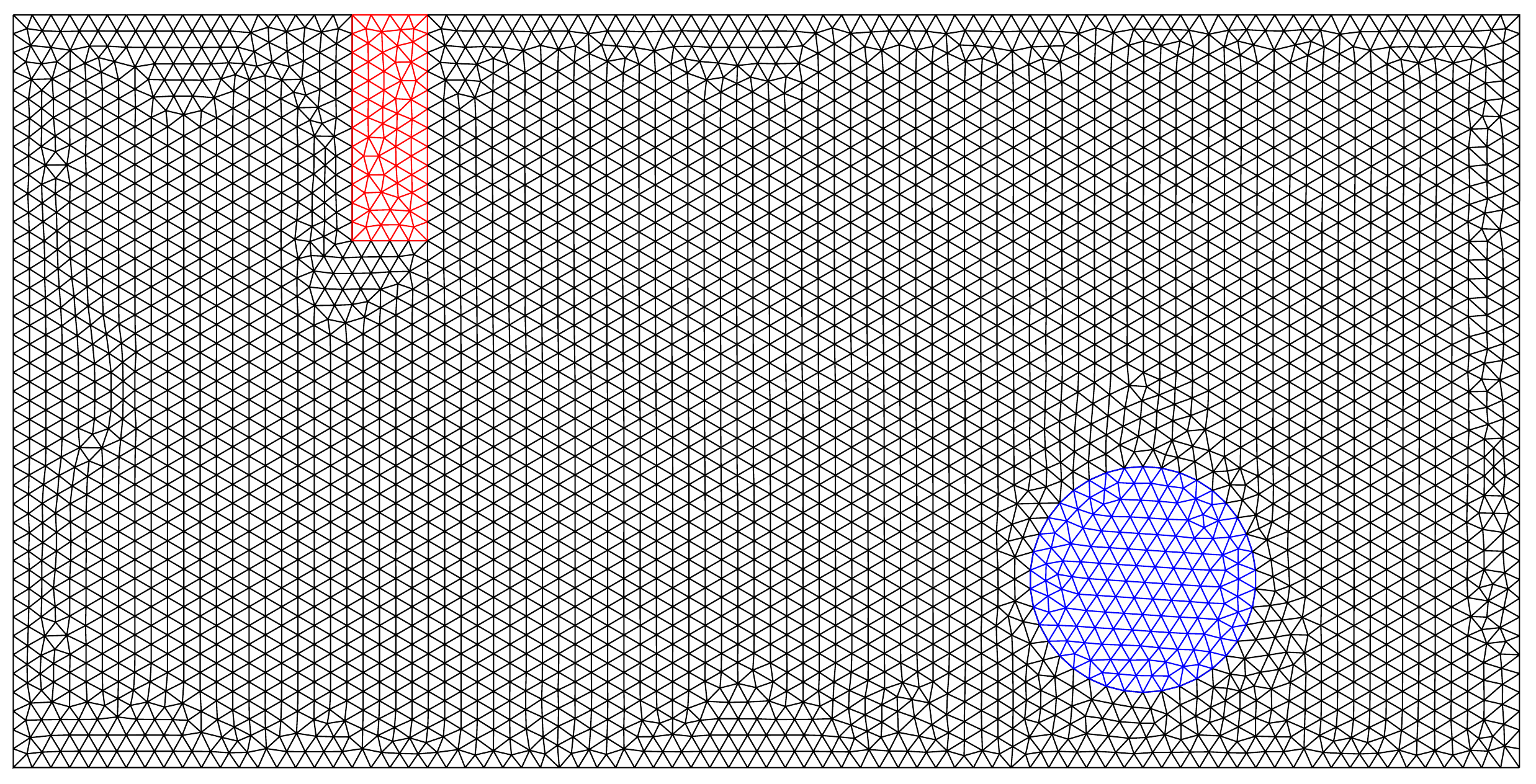}
\caption{Plots of structure mesh of $\Omega_F$ (black), $\Omega_S^1$ (red) and $\Omega_S^2$ (blue).}%
\label{mesh}%
\end{figure}

The mesh is generated by Gmsh \cite{Gmsh} and the numerical solvers are implemented using the Finite element library FEniCS \cite{FENICS} with the default configuration. To solve the nonlinear problems, a Newton's method was used.

The parameters $\nu=1$ and $U=100$ will be used for the Stokes and Navier-Stokes equations. Considering $d=2$ as the length of $\Gamma_I$, the peak Reynolds number on the inflow is
\[
\operatorname{Re}=\dfrac{Ud}{\nu}=200.
\]

\subsection{Stokes equation}
First, we consider the reference solution $(\boldsymbol{u}, p)$ as the solution computed on the real domain $\Omega_F$. Then, for $R\in \{ 10^n\mid n\in\{ 0,1,\ldots,10 \}\}$, the solution $(\boldsymbol{u}_R, p_R)$ is calculated on the fictitious domain $\Omega$. Finally, we compute the errors $\left\Vert \boldsymbol{u}_{R}\right\Vert_{0,\Omega_{S}}$ and $\left\vert \boldsymbol{u}-\boldsymbol{u}_{R}\right\vert _{1,\Omega}$, where $\boldsymbol{u}$ is extended by $\boldsymbol{0}$ on $\Omega_S$.

\begin{table}[H]
\centering{
\begin{tabular}
[c]{|c|cc|cc|}\hline\hline
& \multicolumn{2}{c|}{$\left\Vert \boldsymbol{u}_{R}\right\Vert
_{0,\Omega_{S}}$} & \multicolumn{2}{c|}{$\left\vert \boldsymbol{u}%
-\boldsymbol{u}_{R}\right\vert _{1,\Omega}$}\\
$R$ & Error & Rate & Error & Rate\\\hline\hline
$10^{0}$ & $4.2961\cdot10^{1}$ & $-$ & $4.2961\cdot10^{2}$ & $-$\\
$10^{1}$ & $3.7983\cdot10^{1}$ & $0.0535$ & $3.9663\cdot10^{2}$ & $0.0347$\\
$10^{2}$ & $1.9419\cdot10^{1}$ & $0.2914$ & $2.6400\cdot10^{2}$ & $0.1768$\\
$10^{3}$ & $4.4546\cdot10^{0}$ & $0.6394$ & $1.2827\cdot10^{2}$ & $0.3135$\\
$10^{4}$ & $7.6739\cdot10^{-1}$ & $0.7638$ & $6.2586\cdot10^{1}$ & $0.3116$\\
$10^{5}$ & $1.2696\cdot10^{-1}$ & $0.7813$ & $1.8038\cdot10^{1}$ & $0.5403$\\
$10^{6}$ & $1.5054\cdot10^{-2}$ & $0.9260$ & $2.3898\cdot10^{0}$ & $0.8778$\\
$10^{7}$ & $1.5396\cdot10^{-3}$ & $0.9902$ & $2.4761\cdot10^{-1}$ & $0.9846$\\
$10^{8}$ & $1.5432\cdot10^{-4}$ & $0.9990$ & $2.4851\cdot10^{-2}$ & $0.9984$\\
$10^{9}$ & $1.5436\cdot10^{-5}$ & $0.9999$ & $2.4860\cdot10^{-3}$ & $0.9998$\\
$10^{10}$ & $1.5436\cdot10^{-6}$ & $1.0000$ & $2.4861\cdot10^{-4}$ &
$1.0000$\\\hline\hline
\end{tabular}
}
\caption{History of convergence for Stokes equations.}
\label{TableStokes}
\end{table}%

\begin{figure}[H]
\centering\includegraphics[height=5cm,keepaspectratio]{./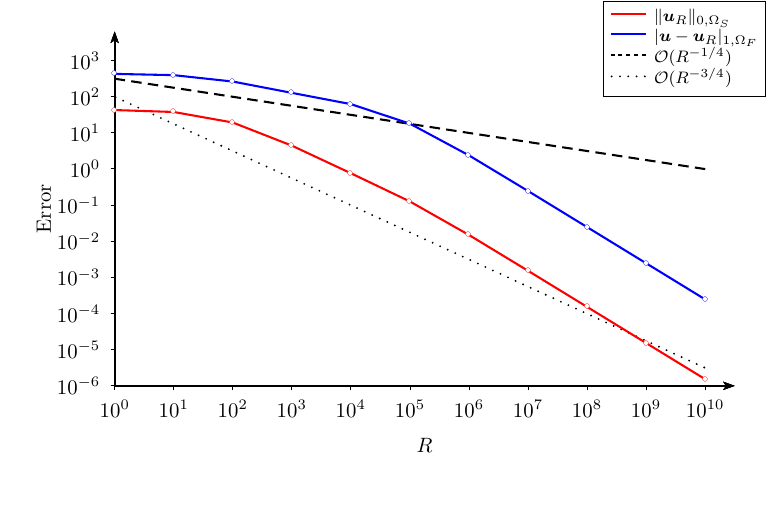}
\caption{History of convergence for Stokes equations.}%
\label{ErrorStokes}%
\end{figure}

Table \eqref{TableStokes} and Figure \ref{ErrorStokes} show that in this experiment the numerical error orders are better than we deduced in Theorem \ref{stokes2}. When $R$ is going to $+\infty$, the errors $\left\Vert \boldsymbol{u}_{R}\right\Vert_{0,\Omega_{S}}$ and $\left\vert \boldsymbol{u}-\boldsymbol{u}_{R}\right\vert _{1,\Omega}$ decrease with order $\mathcal{O}(R^{-1})$. The error orders are similar to $\mathcal{O}(R^{-3/4})$ and $\mathcal{O}(R^{-1/4})$ for $\left\Vert \boldsymbol{u}_{R}\right\Vert_{0,\Omega_{S}}$ and $\left\vert \boldsymbol{u}-\boldsymbol{u}_{R}\right\vert _{1,\Omega}$, respectively, until $R=10^4$. For higher values of $R$, the error order grows up to $\mathcal{O}(R^{-1})$. Hence these results suggest that the obtained error estimates might not be optimal. 

From the isovalues and streamlines plots for $\boldsymbol{u}$ and $\boldsymbol{u}_R$ in Figures \ref{RefStokes}, \ref{Stokes6} and \ref{Stokes2}, we observe that the numerical solution of $\boldsymbol{u}_R$ effectively approximates the reference velocity $\boldsymbol{u}$ for large values of $R$.

\begin{figure}[H]
\captionsetup[subfigure]{labelformat=empty,position=top}
\centering{
\subfloat[Isovalues]{\includegraphics[width=0.40\linewidth,keepaspectratio]{./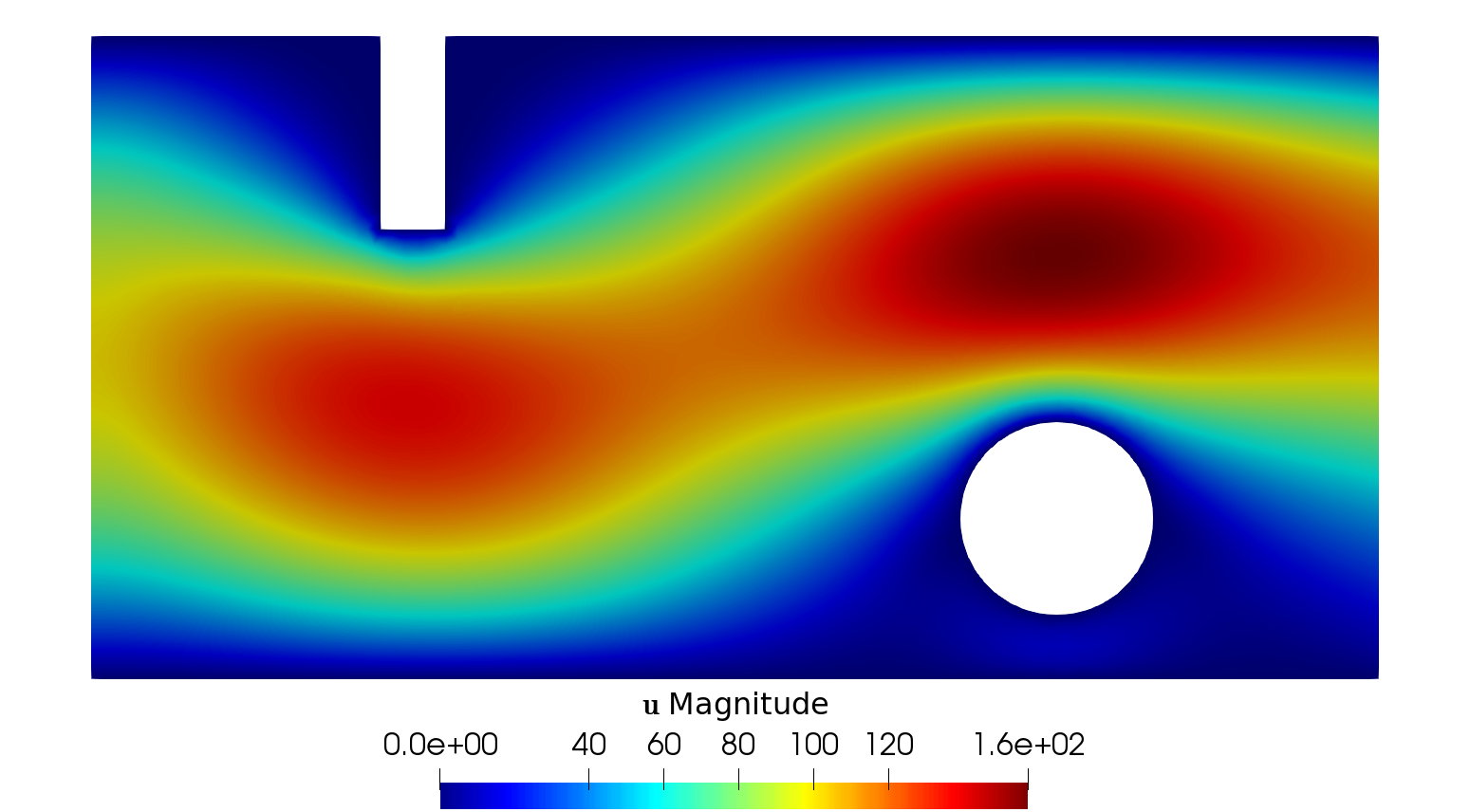}}\quad\quad
\subfloat[Streamlines]{\includegraphics[width=0.40\linewidth,keepaspectratio]{./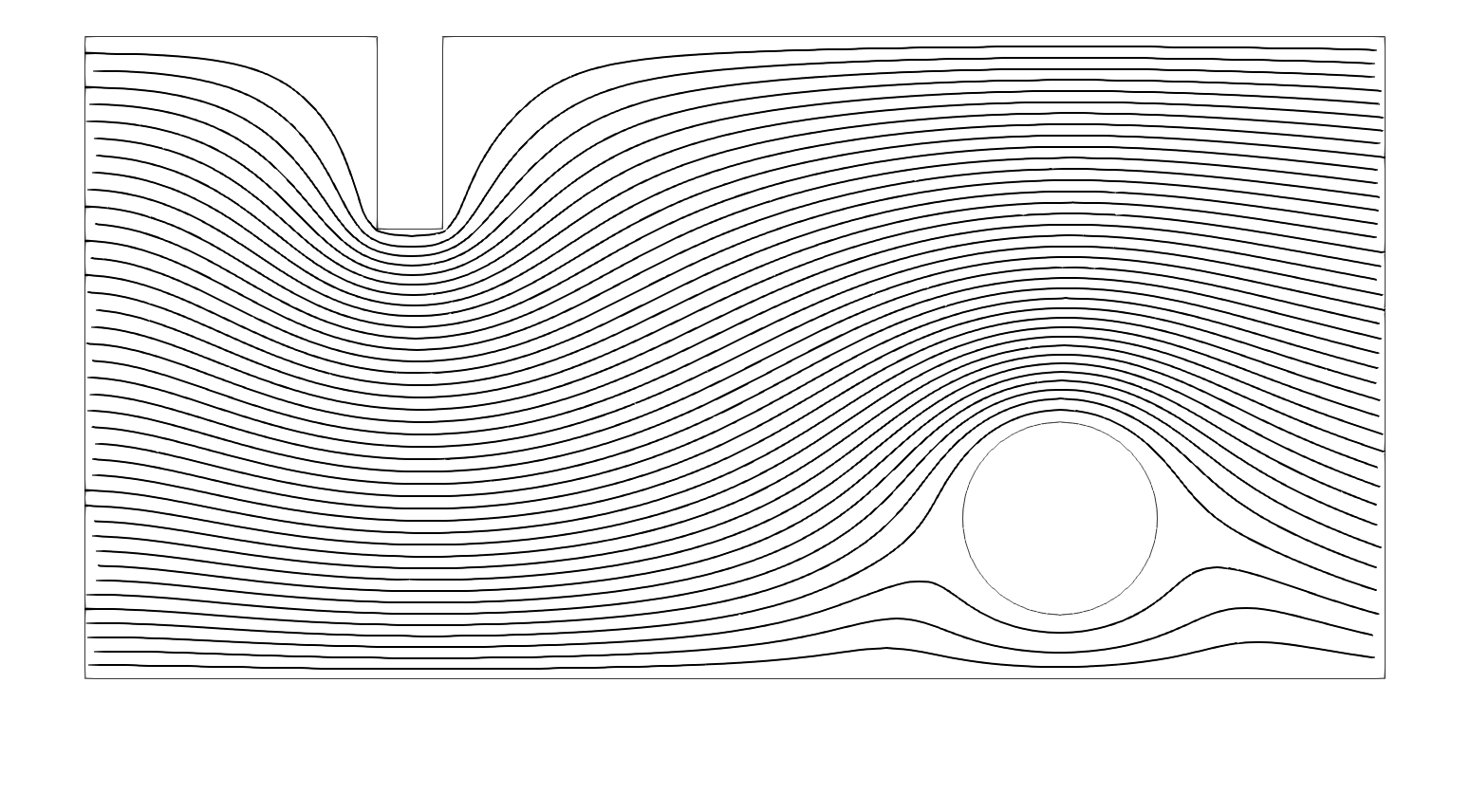}}}
\caption{Reference solution, Stokes equations on $\Omega_F$.}
\label{RefStokes}
\end{figure}

\begin{figure}[H]
\captionsetup[subfigure]{labelformat=empty,position=top}
\centering{
\subfloat[Isovalues]{\includegraphics[width=0.40\linewidth,keepaspectratio]{./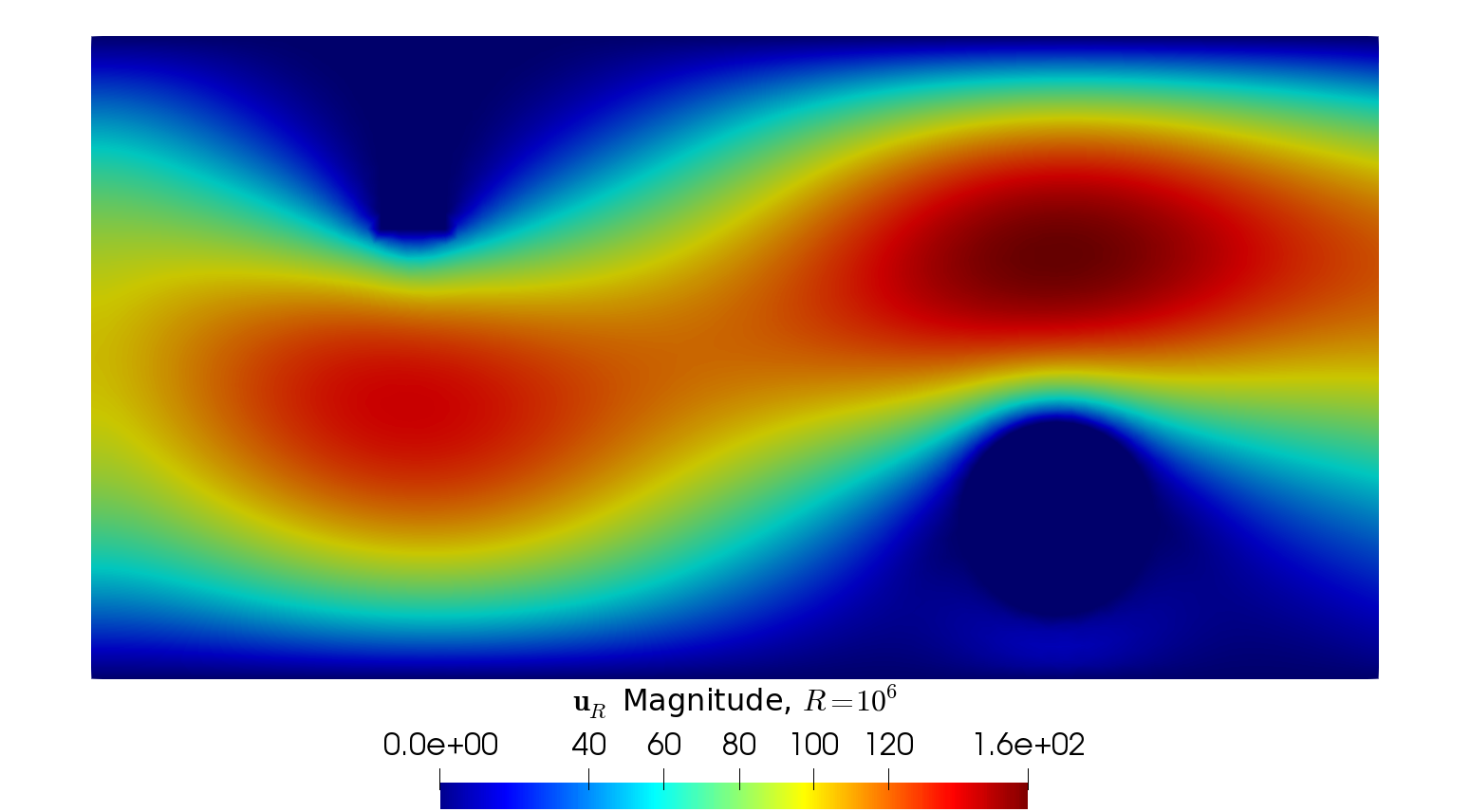}}\quad\quad
\subfloat[Streamlines]{\includegraphics[width=0.40\linewidth,keepaspectratio]{./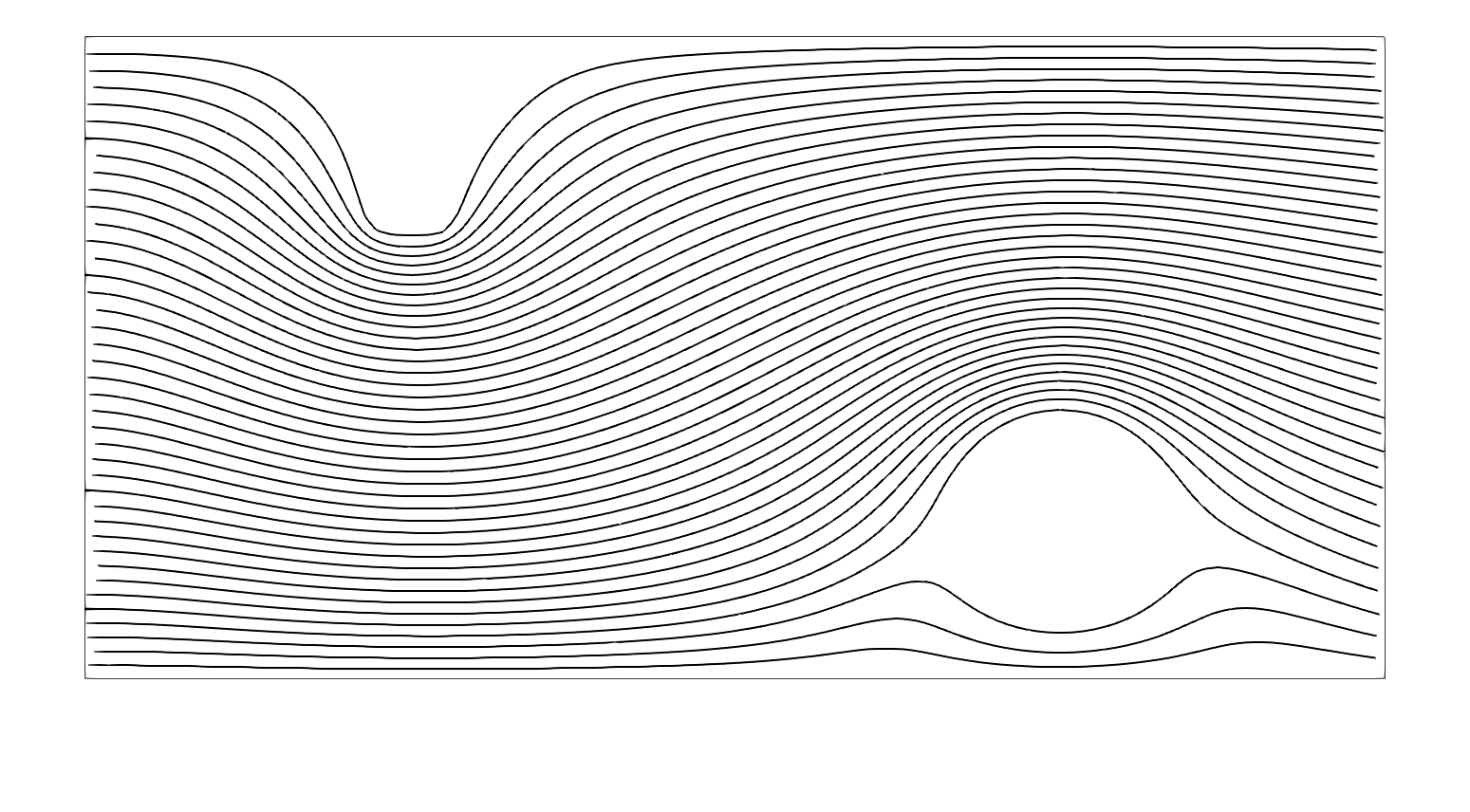}}}
\caption{Solution for penalized Stokes equations for $R=10^6$ on $\Omega$.}
\label{Stokes6}
\end{figure}

\begin{figure}[H]
\captionsetup[subfigure]{labelformat=empty,position=top}
\centering{
\subfloat[Isovalues]{\includegraphics[width=0.40\linewidth,keepaspectratio]{./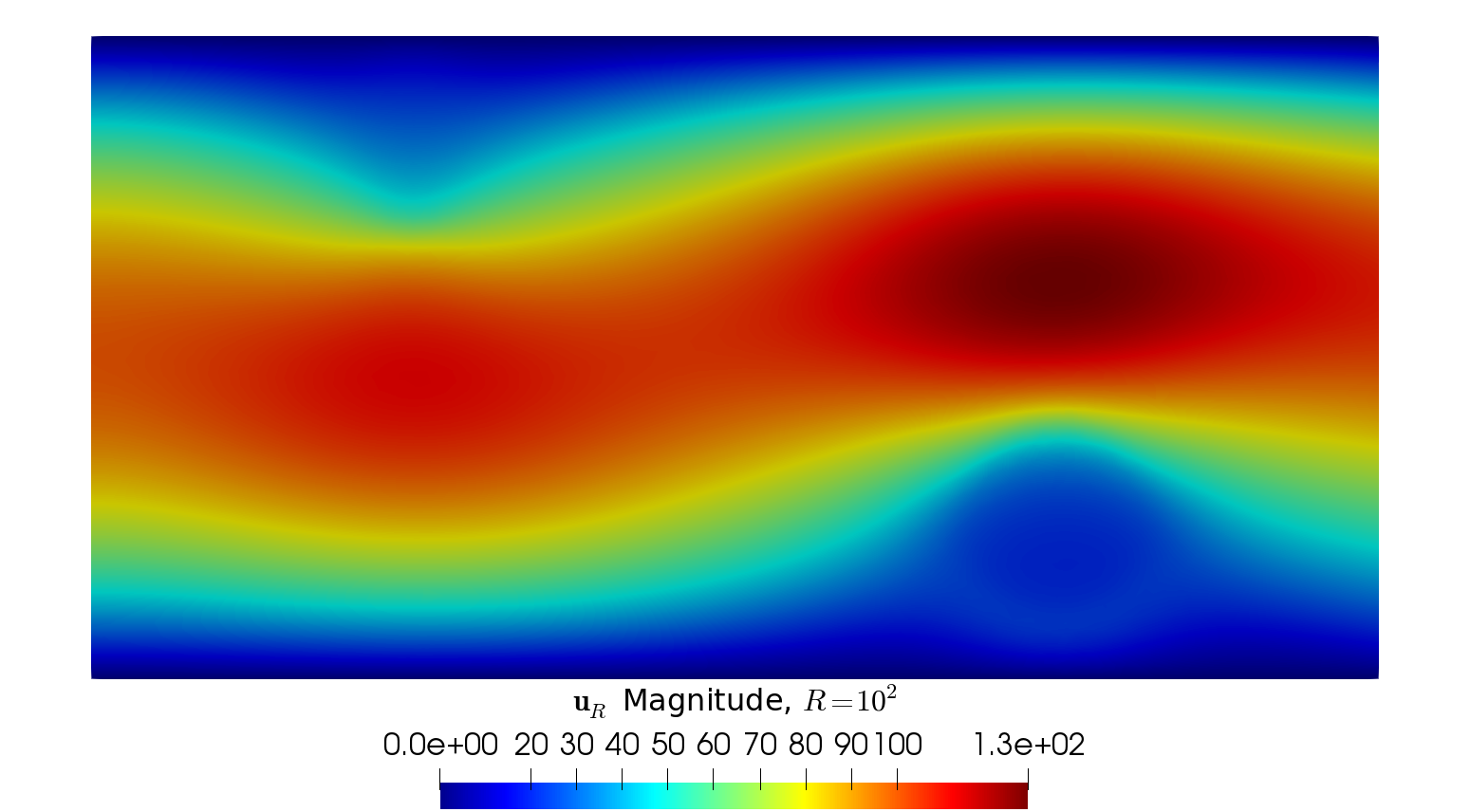}}\quad\quad
\subfloat[Streamlines]{\includegraphics[width=0.40\linewidth,keepaspectratio]{./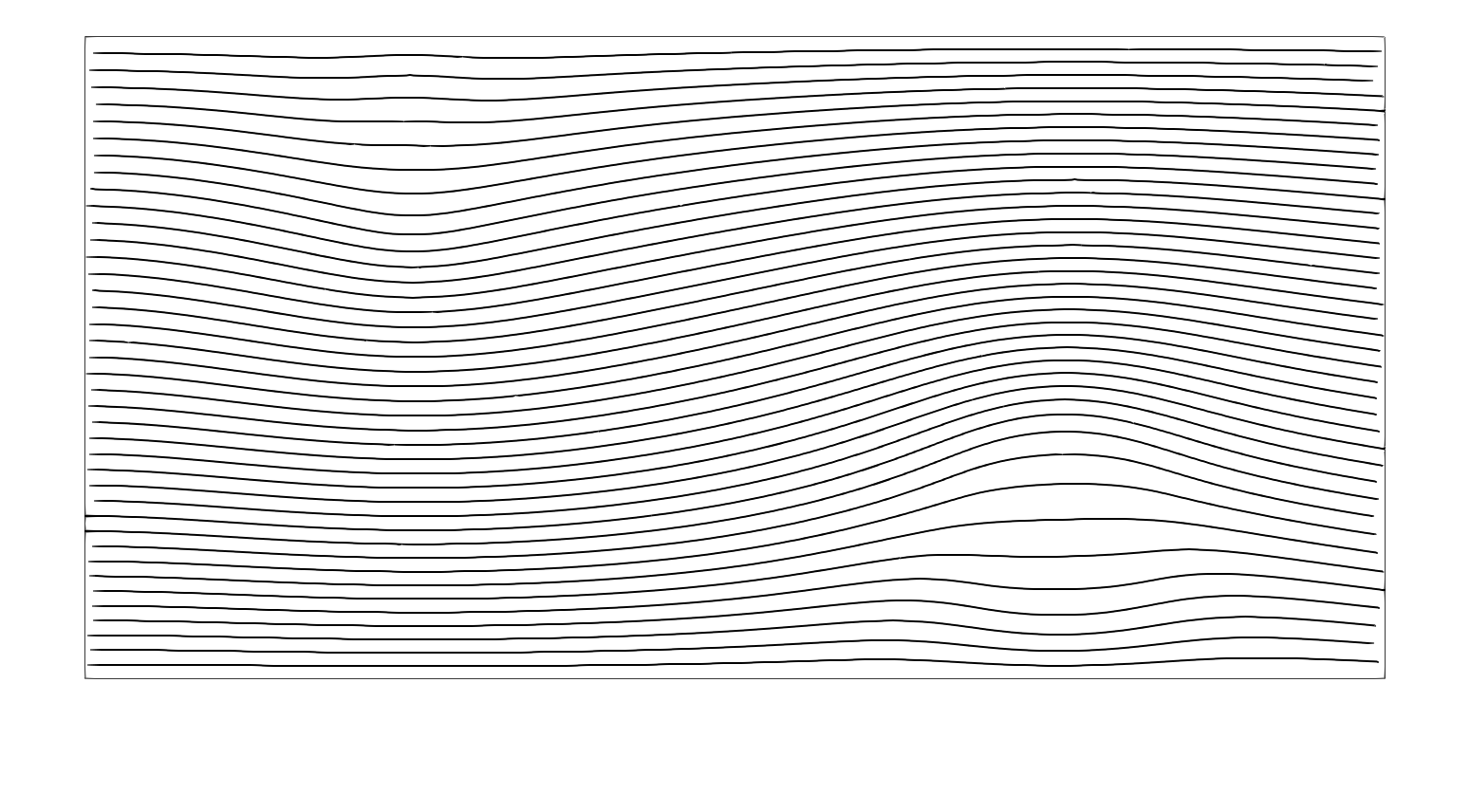}}}
\caption{Solution for penalized Stokes equations for $R=10^2$ on $\Omega$.}
\label{Stokes2}
\end{figure}

\subsection{Navier-Stokes equation}
We repeat the same calculations now for the Navier-Stokes equations. We consider the reference solution $(\boldsymbol{u}, p)$ as the solution computed on the real domain $\Omega_F$ and the solution $(\boldsymbol{u}_R, p_R)$ on the fictitious domain $\Omega$ for $R\in \{ 10^n\mid n\in\{ 0,1,\ldots,10 \}\}$. The errors $\left\Vert \boldsymbol{u}_{R}\right\Vert_{0,\Omega_{S}}$ and $\left\vert \boldsymbol{u}-\boldsymbol{u}_{R}\right\vert _{1,\Omega}$ are computed the same way as in Stokes equations. 

\begin{table}[H]
\centering{
\begin{tabular}
[c]{|c|cc|cc|}\hline\hline
& \multicolumn{2}{c|}{$\left\Vert \boldsymbol{u}_{R}\right\Vert _{0,\Omega
_{S}}$} & \multicolumn{2}{c|}{$\left\vert \boldsymbol{u}-\boldsymbol{u}%
_{R}\right\vert _{1,\Omega}$}\\
$R$ & Error & Rate & Error & Rate\\\hline\hline
$10^{0}$ & $4.3520\cdot10^{1}$ & $-$ & $7.0881\cdot10^{2}$ & $-$\\
$10^{1}$ & $4.2667\cdot10^{1}$ & $0.0086$ & $7.0356\cdot10^{2}$ & $0.0032$\\
$10^{2}$ & $3.5656\cdot10^{1}$ & $0.0780$ & $6.5737\cdot10^{2}$ & $0.0295$\\
$10^{3}$ & $1.5385\cdot10^{1}$ & $0.3650$ & $4.5125\cdot10^{2}$ & $0.1364$\\
$10^{4}$ & $3.0413\cdot10^{0}$ & $0.7040$ & $2.0814\cdot10^{1}$ & $0.3361$\\
$10^{5}$ & $4.6066\cdot10^{-1}$ & $0.8197$ & $6.0166\cdot10^{1}$ & $0.5390$\\
$10^{6}$ & $5.3538\cdot10^{-2}$ & $0.9347$ & $8.0308\cdot10^{0}$ & $0.8746$\\
$10^{7}$ & $5.4639\cdot10^{-3}$ & $0.9912$ & $8.3281\cdot10^{-1}$ & $0.9842$\\
$10^{8}$ & $5.4755\cdot10^{-4}$ & $0.9991$ & $8.3593\cdot10^{-2}$ & $0.9984$\\
$10^{9}$ & $5.4767\cdot10^{-5}$ & $0.9999$ & $8.3625\cdot10^{-3}$ & $0.9998$\\
$10^{10}$ & $5.4768\cdot10^{-6}$ & $1.0000$ & $8.3628\cdot10^{-4}$ &
$1.0000$\\\hline\hline
\end{tabular}
}
\caption{History of convergence for Navier-Stokes equations.}
\label{TableNS}
\end{table}%

\begin{figure}[H]
\centering\includegraphics[height=5cm,keepaspectratio]{./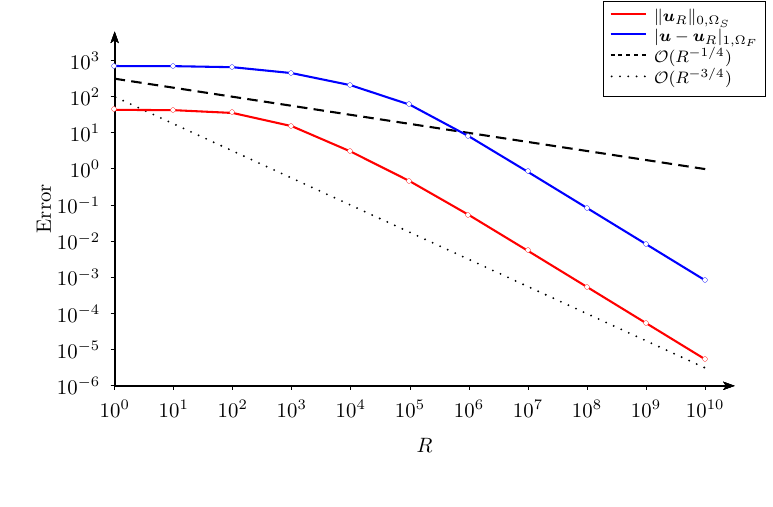}
\caption{History of convergence for Navier-Stokes equations.}%
\label{ErrorNS}%
\end{figure}

While the theory developed in Section \ref{ns} considers only \textcolor{black}{inhomogeneous} Dirichlet boundary conditions, we obtain similar results as in Stokes equations using mixed boundary conditions (Dirichlet and Neumann). Indeed, Table \eqref{TableNS} and Figure \ref{ErrorNS} show that in this experiment the numerical error orders are better than $\mathcal{O}(R^{-3/4})$ and $\mathcal{O}(R^{-1/4})$ for $\left\Vert \boldsymbol{u}_{R}\right\Vert_{0,\Omega_{S}}$ and $\left\vert \boldsymbol{u}-\boldsymbol{u}_{R}\right\vert _{1,\Omega}$ when $R$ is going to $+\infty$, obtaining an order $\mathcal{O}(R^{-1})$ for both errors. Again, the error estimates obtained in Theorem \ref{thm_NS_2} might not be optimal,
\textcolor{black}{with similar conclusions as in the Stokes problem.}

From the isovalues and streamlines plots for $\boldsymbol{u}$ and $\boldsymbol{u}_R$ in Figures \ref{RefStokes}, \ref{Stokes6} and \ref{Stokes2}, we observe that the numerical solution of $\boldsymbol{u}_R$ effectively approximates the reference velocity $\boldsymbol{u}$, including the vortex after the upper obstacle, for large values of $R$.

\begin{figure}[H]
\captionsetup[subfigure]{labelformat=empty,position=top}
\centering{
\subfloat[Isovalues]{\includegraphics[width=0.40\linewidth,keepaspectratio]{./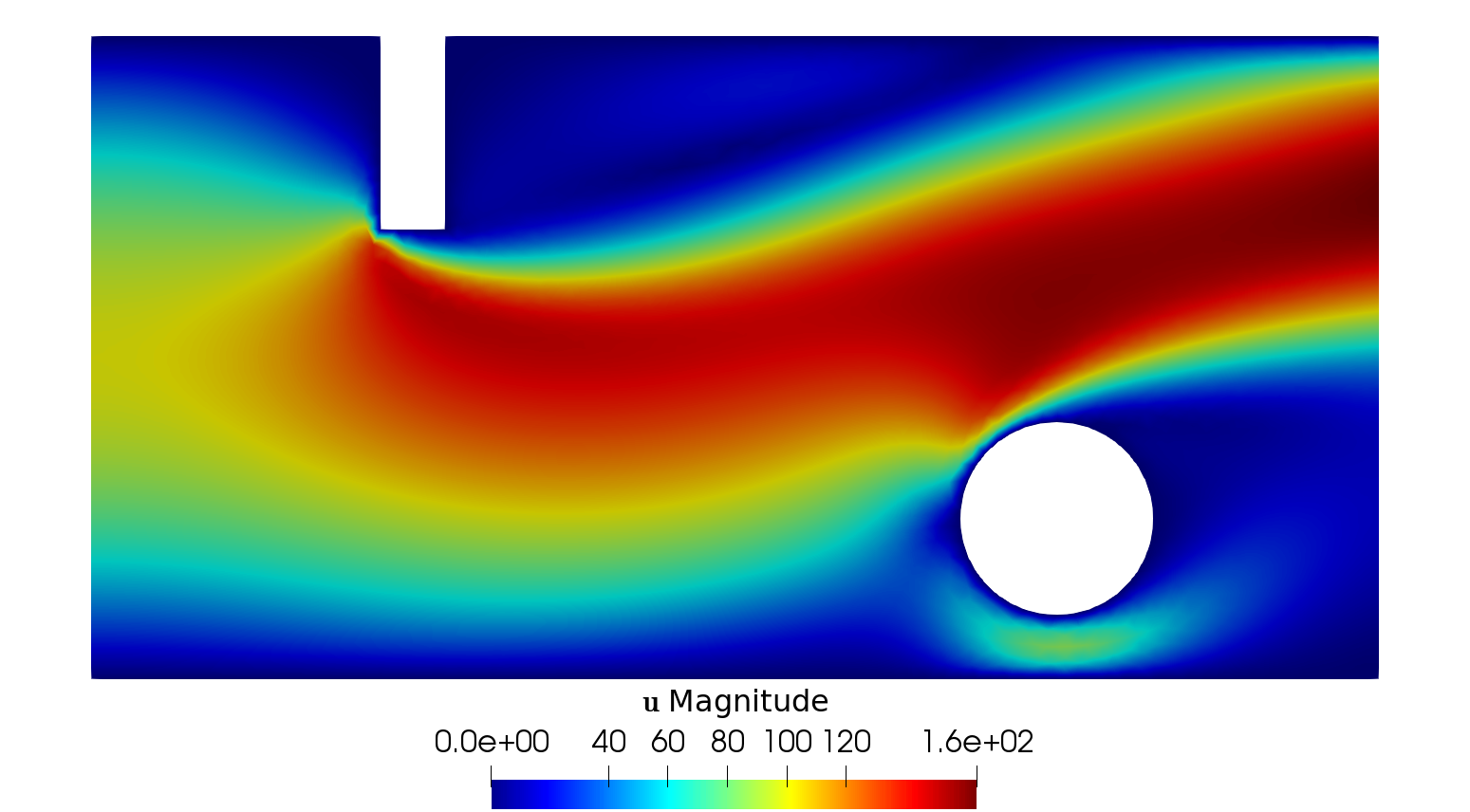}}\quad\quad
\subfloat[Streamlines]{\includegraphics[width=0.40\linewidth,keepaspectratio]{./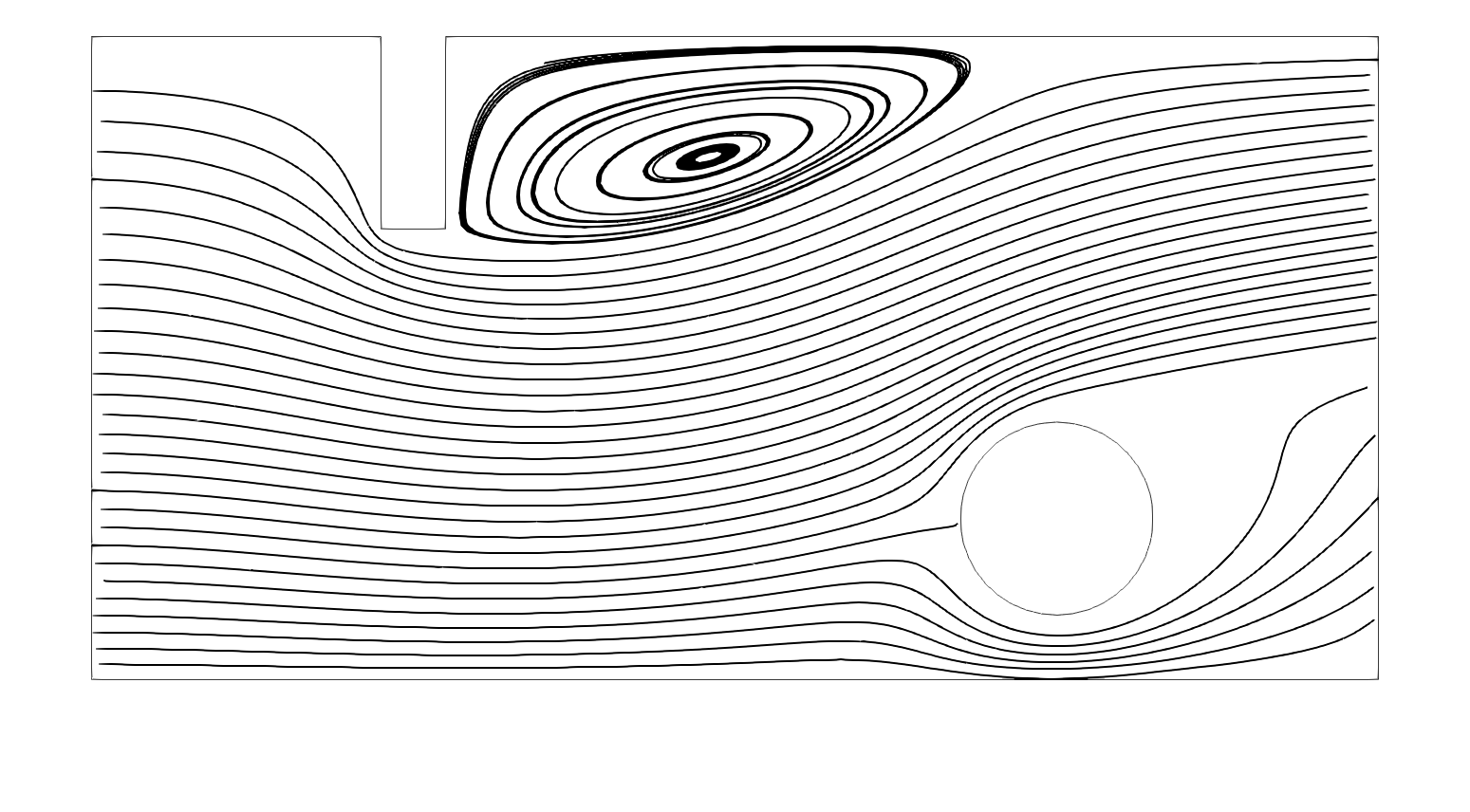}}}
\caption{Reference solution, Navier-Stokes equations on $\Omega_F$.}
\label{RefNS}
\end{figure}

\begin{figure}[H]
\captionsetup[subfigure]{labelformat=empty,position=top}
\centering{
\subfloat[Isovalues]{\includegraphics[width=0.40\linewidth,keepaspectratio]{./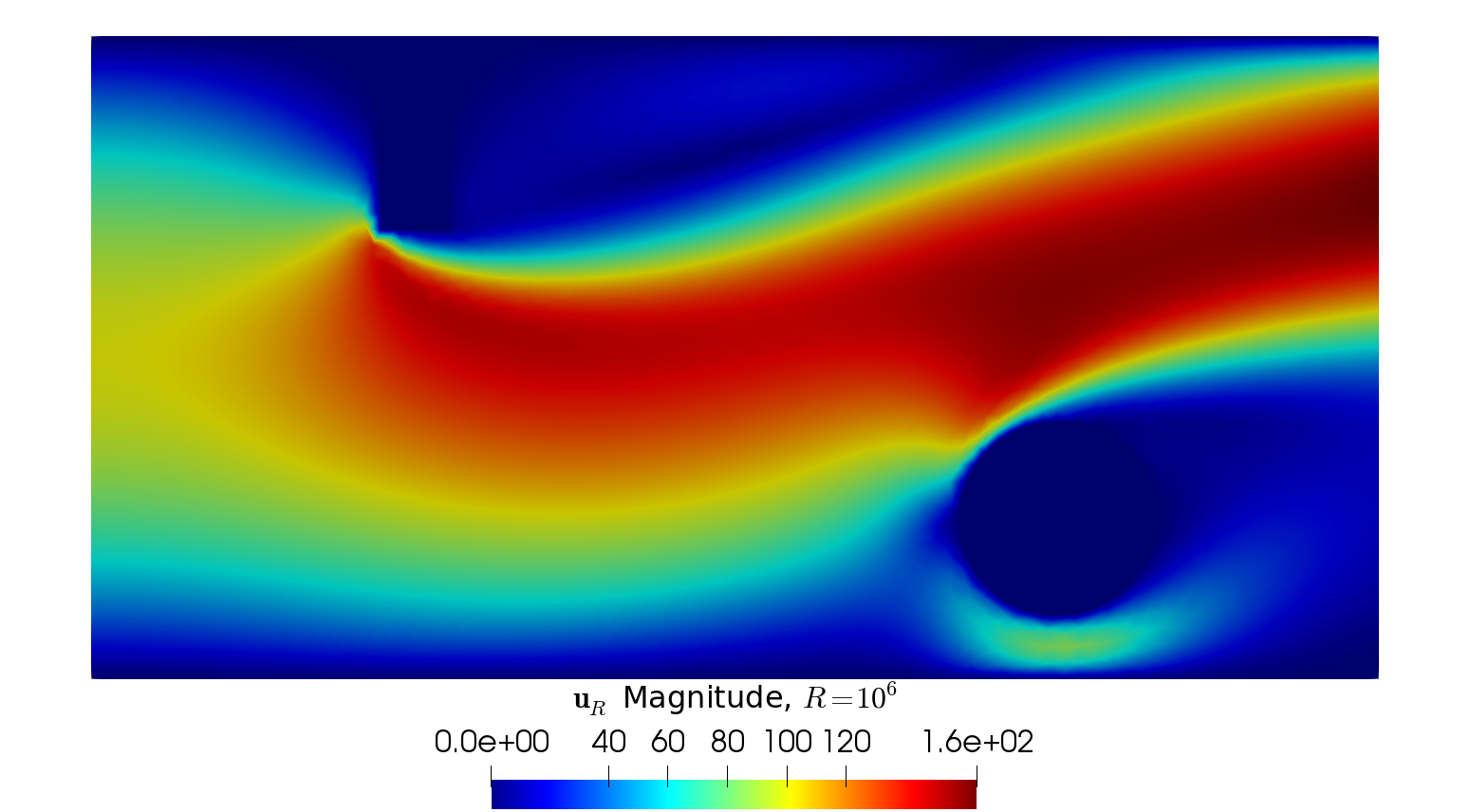}}\quad\quad
\subfloat[Streamlines]{\includegraphics[width=0.40\linewidth,keepaspectratio]{./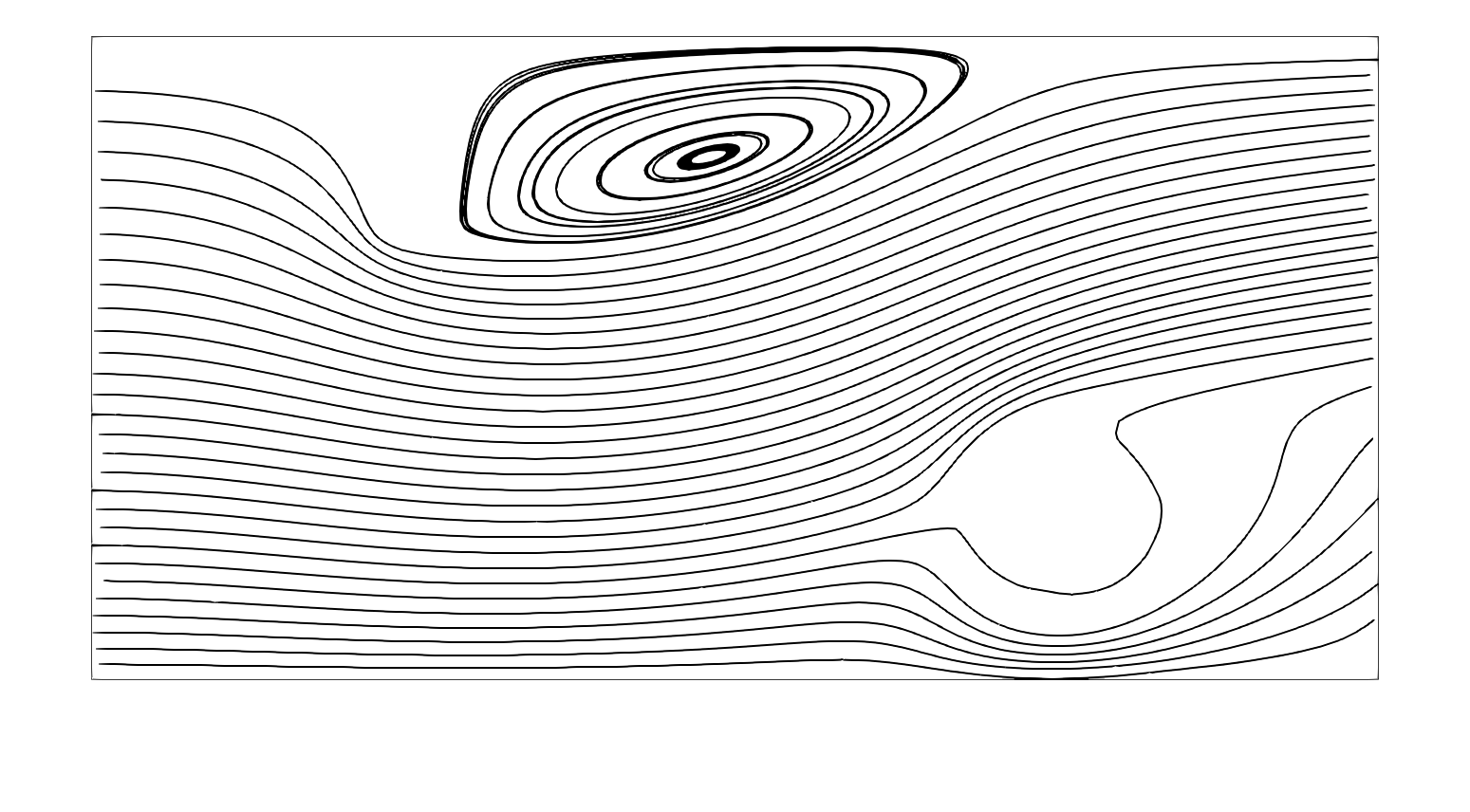}}}
\caption{Solution for penalized Navier-Stokes equations for $R=10^6$ on $\Omega$.}
\label{NS6}
\end{figure}

\begin{figure}[H]
\captionsetup[subfigure]{labelformat=empty,position=top}
\centering{
\subfloat[Isovalues]{\includegraphics[width=0.40\linewidth,keepaspectratio]{./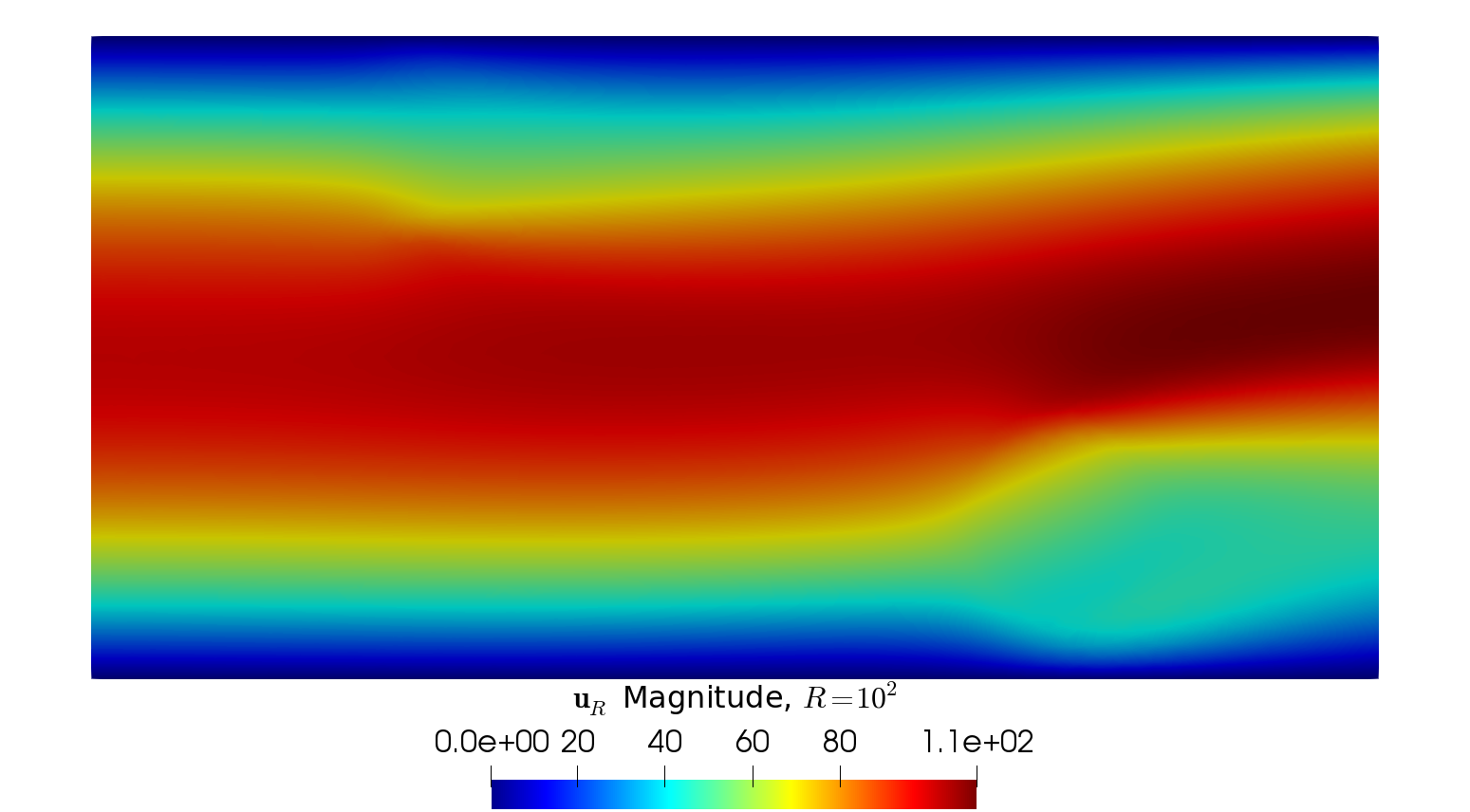}}\quad\quad
\subfloat[Streamlines]{\includegraphics[width=0.40\linewidth,keepaspectratio]{./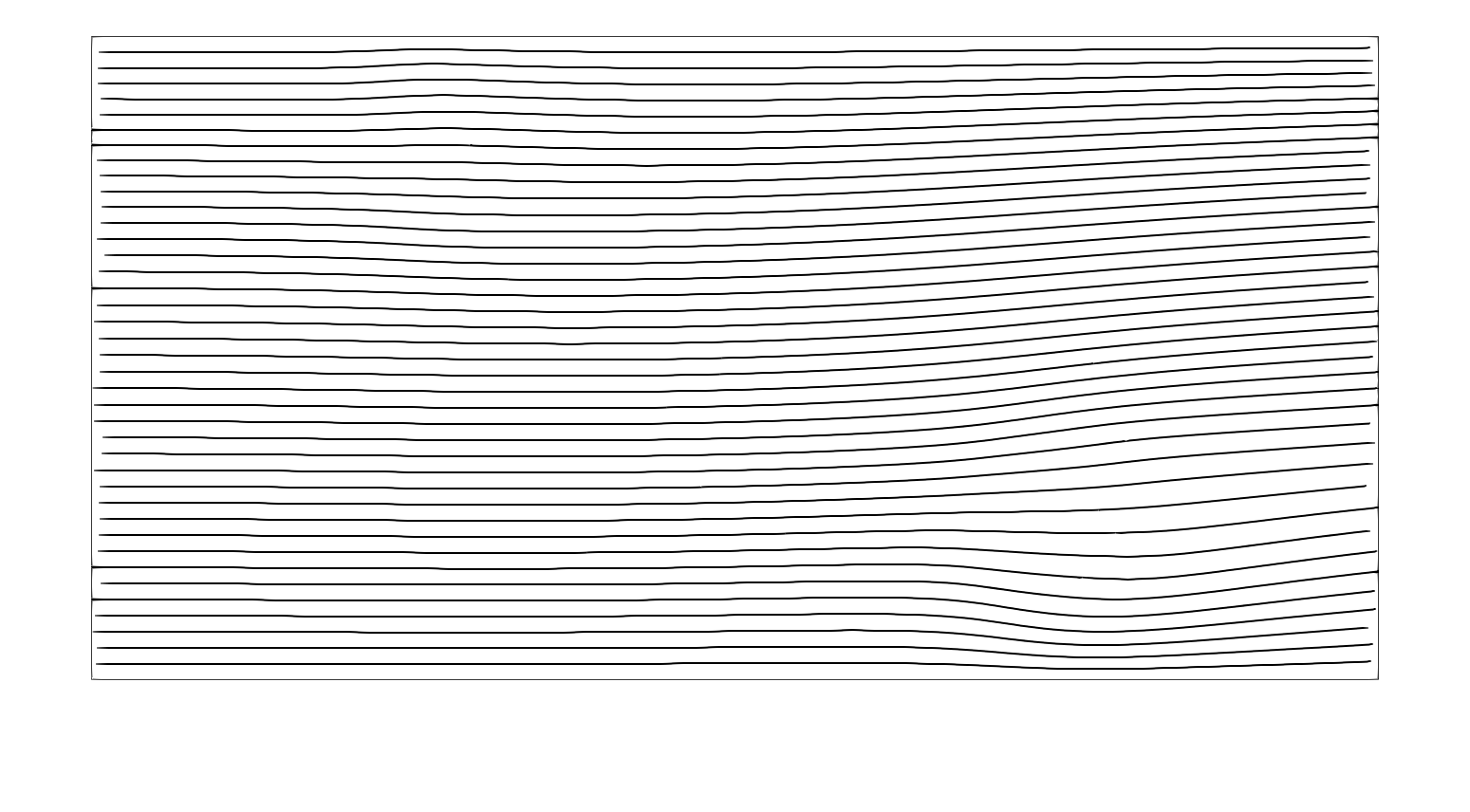}}}
\caption{Solution for penalized Navier-Stokes equations for $R=10^2$ on $\Omega$.}
\label{NS2}
\end{figure}

\section{Conclusions}
We have rigorously established and analyzed a penalization method for steady Stokes and Navier-Stokes equations to approximate the fluid equations around obstacles. The error estimations obtained in Sections \ref{stokes} and \ref{ns} allow us to consider the penalization parameter $R$ as large as necessary to reduce the penalty error, verifying the robustness of the method.

The numerical test proves that this method is easy to implement and is a way to analyze obstacles that does not change the domain when working with a fictitious domain. %Under this approach, 
\textcolor{black}{Hence, we have shown both theoretically and with numerical experiments that the equations presented constitute a valid model for fluids going through obstacles in which the numerical implementation is much simpler and cheap for computations since it will not depend on the geometry of the obstacles. As a consequence,} it is possible to avoid shape optimization methods and work with this penalization term \textcolor{black}{and thus to simplify} the models and their numerical implementation.

\textcolor{black}{About future work, the numerical experiments developed in this work allows us to conjecture that it would be possible to improve theoretically the penalty error from $R^{-3/4}$ to $R^{-1}$. In addition, for the time dependent setting, it would be possible to prove similar estimates, and if we use similar techniques, we expect to find the penalty error observed in \cite{ABF99}, since here we obtained the penalty error observed in \cite{A99}.
}

\section*{Acknowledgements}
Jorge Aguayo was partially funded by the National Agency for Research and Development (ANID) / Scholarship Program / BECA DOCTORADO NACIONAL / 2018-21180642. Hugo Carrillo-Lincopi was funded by CMM ANID PIA AFB170001.

%\nocite{*}
\bibliographystyle{elsarticle-harv}
\bibliography{ref}

\end{document}